\DeclareMathOperator{\Con}{\mathrm{Con}}
\DeclareMathOperator*{\lcm}{lcm}
\renewcommand{\emptyset}{\varnothing}
\newcommand{\<}{\langle}
\renewcommand{\>}{\rangle}
\newcommand{\keywords}[1]{{\renewcommand{\thefootnote}{\relax}\footnotetext{\emph{Keywords:}
    #1}}}
\renewcommand{\o}{\vee}
\newcommand{\y}{\wedge}
\newcommand{\ent}{\Rightarrow}
\newcommand{\tne}{\Leftarrow}
\newcommand{\al}{\alpha}
\renewcommand{\phi}{\varphi}
\renewcommand{\th}{\theta}
\newcommand{\ga}{\gamma}
\newcommand{\gar}{\mathrel{\gamma}}
\newcommand{\thr}{\mathrel{\theta}}
\newcommand{\nthr}{\mathrel{\cancel{\theta}}}
\newcommand{\del}{\delta}
\newcommand{\delr}{\mathrel{\delta}}
\newcommand{\card}[1]{|#1|}
\newcommand{\st}{\mid}
\newcommand{\con}[1]{\mathrel{\langle #1 \rangle}}
\newcommand{\conr}[2]{\mathrel{\langle #1;\bar{#2} \rangle}}
\newcommand{\modt}{\mathrel{\equiv_t}}
\newcommand{\mods}{\mathrel{\equiv_s}}
\newcommand{\modkk}{\mathrel{\equiv_{2k}}}
\newcommand{\modku}{\mathrel{\equiv_{2k+1}}}
\newcommand{\modkd}{\mathrel{\equiv_{2k+2}}}
\newcommand{\br}{{\bar r}}
\newcommand{\bs}{{\bar s}}
\newcommand{\ext}{\mathrm{ext}}
\newcommand{\tira}[1]{\mathbf{#1}}
\newcommand{\displ}[1]{\Delta_{\bar #1}}
\newcommand{\alt}{\mathsf{f}}
\newcommand{\rr}{\mathrel{R}}
\renewcommand{\eta}{e}
\definecolor{lightblue}{RGB}{224,224,255}
\definecolor{lightred}{RGB}{255,224,224}
\definecolor{lightgreen}{RGB}{224,255,224}
\definecolor{lightyellow}{RGB}{255,255,224}
\definecolor{lightpurple}{RGB}{255,224,255}
\definecolor{darkerred}{RGB}{64,0,0}
\definecolor{darkred}{RGB}{128,0,0}
\definecolor{darkblue}{RGB}{0,0,128}
\definecolor{darkgreen}{RGB}{0,128,0}
\definecolor{darkpurple}{RGB}{128,0,128}
\def\THICKhrulefill{\leavevmode \leaders \hrule height 5pt\hfill \kern \z@}
\newtheorem{theorem}{Theorem}
\newtheorem{thm}[theorem]{Theorem}
\newtheorem{lemma}[theorem]{Lemma}
\newtheorem{lem}[theorem]{Lemma}
\newtheorem{corollary}[theorem]{Corollary}
\newtheorem{cor}[theorem]{Corollary}
\newtheorem*{claim*}{Claim}
\theoremstyle{definition}
\newtheorem{defn}[theorem]{Definition}
\newtheorem*{notation}{Notation}
\newtheorem{example}[theorem]{Example}
\theoremstyle{remark}
\newtheorem*{ack}{Acknowledgments}
\newtheorem{rem}[theorem]{Remark}
\begin{document}
\title{The Lattice of Congruences of a Finite Line Frame}
\author{C.~Areces
  \and M.~Campercholi \and D.~Penazzi \and P.~S\'{a}nchez Terraf%
}
\maketitle
\keywords{bisimulation equivalence, permuting relations, modal
  algebra, lattice of subalgebras, algebraic function. \emph{MSC 2010:} 
  03B45; 
  06B10; 
  06E25; 
  03B70. 
  \emph{ACM  class:} 
  F.4.1; F.1.2.
  %
}


\maketitle

\begin{abstract}
Let $\mathbf{F}=\left\langle F,R\right\rangle $ be a finite Kripke
frame. A \emph{congruence} of $\mathbf{F}$ is a bisimulation of $\mathbf{F}$
that is also an equivalence relation on F. The set of all congruences
of $\mathbf{F}$ is a lattice under the inclusion ordering. In this
article we investigate this lattice in the case that $\mathbf{F}$
is a finite line frame. We give concrete descriptions of the join
and meet of two congruences with a nontrivial upper bound. Through
these descriptions we show that for every nontrivial congruence $\rho$,
the interval $[\mathrm{Id_{F},\rho]}$ embeds into the lattice of
divisors of a suitable positive integer. We also prove that any two
congruences with a nontrivial upper bound permute.
\end{abstract}
%

%
%
%
%
%

%
\section{Introduction}

Let $\mathbf{F}=\left\langle F,R\right\rangle $ be a finite Kripke
frame (i.e., directed graph). A \emph{congruence} of $\mathbf{F}$
is a bisimulation of $\mathbf{F}$ that is also an equivalence relation
on F. The set of all congruences of $\mathbf{F}$
is a lattice under the inclusion ordering. In this article we investigate
congruence lattices of finite \emph{line frames} (also called ``lines'' in
the sequel). 
To be precise, a \emph{line} is a finite frame $\mathbf{L}=\langle\{0,\ldots,n\},R\rangle$,
where $x\mathrel{R}y$ iff $|x-y|\leq1$. We are able to give concrete
descriptions of the join and meet of two congruences with a nontrivial
upper bound. Through these descriptions we show that for every nontrivial
congruence $\rho$, the interval $[\mathrm{Id_{L},\rho]}$ embeds
in a natural way into the lattice of divisors of a suitable positive
integer. We also prove that any two congruences with a nontrivial
upper bound permute; that is their join is the composition.

The congruences of a line have an appealing geometrical nature, since
a congruence can be thought of as a way of \emph{folding} the line.
It is also possible to give a geometrical representation of two congruences
on a diagram resembling the \emph{trajectory} of a particle traveling
inside a rectangle and bouncing along its sides. These representations
turn the problem of understanding the congruence lattice of a line
into a surprisingly elementary geometrical puzzle.

\subsection{Background}

Our interest in congruences of frames arises from our study of equationally
definable functions (called \emph{algebraic functions }\cite{algebraicFunc})
in modal algebras. Efforts to characterize this kind of functions
for other classes of algebraic structures (e.g., Boolean algebras,
distributive lattices, abelian groups, etc.) has been the focus of
\cite{algebraicFunc}, \cite{AlgebraicasLuka} and \cite{AlgebraicasQuasi}.
This line of research is motivated by the understanding that many
structural properties of algebras are tied to syntactical phenomena.
Mal'cev conditions \cite{BurrisSankappanavar1981} are a premier example
of this fact, and a variety of fundamental results in Universal Algebra
are of this nature. Thus, studying the term-functions of the algebras
in a given class is a powerful approach to understand structural properties.
Term-functions are the most basic \emph{definable functions}, and
oftentimes characterizations require more complex syntactical counterparts.
Algebraic functions are a natural generalization of term-functions;
they share many of the basic properties of term-functions and every
term-function is an algebraic function \cite{algebraicFunc}. An interesting
example of a result linking equationally definable functions to structural
properties can be found in \cite{QuaternaryMono}.

In \cite{algebraicFunc} it is shown that the study of algebraic functions
in the algebras of a variety $\mathcal{V}$ can be approached through
the investigation of the subclasses of $\mathcal{V}$ axiomatizable
by sentences of the form $\forall\bar{x}\exists_{=1}\bar{y}\,\alpha(\bar{x},\bar{y})$,
where $\alpha(\bar{x},\bar{y})$ is a conjunction of term-equalities.
We call these formulas \emph{EFD-sentences} (EFD stands for equational
function definition). In the case that $\mathcal{V}$ is a finitely
generated discriminator variety (in particular, when $\mathcal{V}$
is a finitely generated variety of modal algebras \cite{KrachtSemisimpleModal})
the EFD-axiomatizable subclasses of $\mathcal{V}$ are in correspondence
with the subclasses of simple members of $\mathcal{V}$ closed under
isomorphisms, fixed-point subalgebras and intersection of subalgebras
\cite{teoReferee}. Thus, we set out to characterize the classes $\mathcal{C}$
of finite simple modal algebras satisfying the above closure conditions.
This turned out to be a very challenging task. The key difficulty
comes from the fact that there is no obvious description of the atoms
of an intersection of two subalgebras of a modal algebra. Our tool
of choice to tackle this problem is the duality linking finite modal
algebras and Kripke frames \cite{libroKracht}. The lattice of subuniverses
of a finite modal algebra is dually isomorphic with the lattice of
congruences of its dual frame. So understanding intersection of subalgebras
amounts to the same thing as understanding the join of congruences
of frames. Finding a simple description for the join of two congruences
of an arbitrary finite frame appears to be a very hard (perhaps impossible)
task. In this article we focus on line frames, and already for this
special case we found it to be an interesting problem.

It is worth mentioning that we did not include here any of the (universal)
algebraic applications of the results, as we plan to address them
in a forthcoming article.

\paragraph{Outline}
In the next section we study the basic properties of congruences
of lines and provide an arithmetical characterization for them. We also introduce
their description as foldings; this representation proves to be very useful for getting insight
and simplifying proofs. In particular, it will lead to the 
characterization of the order among congruences.

In Section~\ref{sec:case-frequency-two} we address a special case of joins,
namely, when one of the congruences has equivalence classes with at 
most two elements. This case is treated separately because it
does not follow the general pattern that joins of congruences with bigger
classes enjoy. This pattern is captured with the help of trajectories, which we introduce
in Section~\ref{sec:trajectories}. 

Our main results are gathered in
Section~\ref{sec:main-results}. Section~\ref{sec:catalog-non-trivial-joins}
provides a complete catalog of the possible local configurations of
pairs of congruences having nontrivial joins. This catalog will help
in proving permutability in Section~\ref{sec:permutability}. Finally,
we obtain a  simple formula for calculating the quotient
of a line by a nontrivial join in Section~\ref{sec:computing-join}.

%
%
%
%
%

\section{Congruences of Lines}

\subsection{Congruences of a frame}

Let $\mathbf{F}=\left\langle F,R\right\rangle $ be a frame. Recall
that a \emph{bisimulation} of $\mathbf{F}$ is a binary relation $\theta\subseteq F\times F$
satisfying $\theta\circ R\subseteq R\circ\theta$ and $\theta^{-1}\circ R\subseteq R\circ\theta^{-1}$.
We call a bisimulation of $\mathbf{F}$ that is also an equivalence
relation on $F$ a \emph{congruence} of $\mathbf{F}$. We write $\Con\mathbf{F}$
to denote the set of congruences of $\mathbf{F}$. Since equivalence
relations are symmetric, $\theta$ is a congruence of $\mathbf{F}$
if and only if 
\[
\left(\forall x,x',y\in F\right)\ x'\thr x\mbox{ and }x\rr y\text{ imply there is }y'\text{ such that }x'\rr y'\mbox{ and }y'\thr y.
\]
Observe that when $R$ is symmetric, the congruences of $\mathbf{F}$
are the equivalence relations $\theta$ on F that permute with $R$,
i.e., such that $\theta\circ R=R\circ\theta$.

We say that an equivalence relation is \emph{trivial} if it has exactly
one equivalence class. The \emph{quotient} of $\mathbf{F}$ by $\theta$
is the frame $\mathbf{F}/\theta\doteq\left\langle F/\theta,R_{\theta}\right\rangle $
where $x/\theta\mathrel{R_{\theta}}y/\theta$ iff there are $x'\thr x$
and $y'\thr y$ such that $x'\rr y'$. For $x\in F$ we define $R\left[x\right]\doteq\left\{ y\in F\mid x\mathrel{R}y\right\} $
and $v\left(x\right)\doteq\left|R\left[x\right]\right|$. Here are
some basic properties of congruences and quotients.
\begin{lem}
\label{lem:basicas-congruencias}Let $\theta$ be a congruence of
the frame $\mathbf{F}=\left\langle F,R\right\rangle $.
\begin{enumerate}
\item $x\rr y$ implies $x/\theta\rr_{\theta}y/\theta$.
\item If there is a $R$-path from $x$ to $y$ in $\mathbf{F}$, then there
is one from $x/\theta$ to $y/\theta$ in $\mathbf{F}/\theta$. Thus
if $\mathbf{F}$ is connected so is $\mathbf{F}/\theta$.
\item $R_{\theta}\left[x/\theta\right]=R\left[x\right]/\theta$.
\item $v\left(x/\theta\right)\leq v\left(x\right)$.
\item If $\mathbf{F}$ is connected and there is $x\in F$ such that \textup{$R\left[x\right]\subseteq x/\theta$
then $\theta$ is trivial.}
\end{enumerate}
\end{lem}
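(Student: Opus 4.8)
The plan is to prove the five items in the stated order, since each rests on the previous ones and all follow directly from the definition of the quotient relation $R_\theta$ together with the single reformulation of ``congruence'' recalled just before the lemma.

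First I would dispatch items (1) and (2). For (1), unfolding the definition of $R_\theta$, the relation $x/\theta \mathrel{R_\theta} y/\theta$ merely requires witnesses $x'\thr x$, $y'\thr y$ with $x'\rr y'$, and $x'=x$, $y'=y$ work whenever $x\rr y$. Item (2) is then an immediate induction on the length of an $R$-path, chaining the one-step implications from (1); connectedness of $\mathbf{F}/\theta$ drops out as a corollary (and this is implicitly why ``connected'' should be read so that any two points are joined by an $R$-path, directed).

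The heart of the lemma is item (3), and it is the only place where the bisimulation hypothesis on $\theta$ is genuinely used. The inclusion $R[x]/\theta \subseteq R_\theta[x/\theta]$ is just a restatement of (1). For the converse, given $v/\theta \in R_\theta[x/\theta]$, pick $x'\thr x$ and $v'\thr v$ with $x'\rr v'$; feeding $x\thr x'$ and $x'\rr v'$ into the defining property of a congruence produces $w$ with $x\rr w$ and $w\thr v'$, so $w\in R[x]$ and $w/\theta = v/\theta$. The only thing to watch is the bookkeeping --- matching the pattern ``$a'\thr a$ and $a\rr b$'' to the right variables and using the symmetry of $\theta$ --- but I do not expect a real obstacle. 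Item (4) is then immediate from (3), since $v(x/\theta)=\left|R_\theta[x/\theta]\right|=\left|R[x]/\theta\right|$ and the quotient map $R[x]\twoheadrightarrow R[x]/\theta$ cannot increase cardinality.

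For item (5), assume $R[x]\subseteq x/\theta$. Then every element of $R[x]$ lies in the single class $x/\theta$, so $R[x]/\theta \subseteq \{x/\theta\}$, and by (3) this says $R_\theta[x/\theta]\subseteq\{x/\theta\}$: the point $x/\theta$ has no $R_\theta$-successor other than possibly itself. Since $\mathbf{F}$ is connected, (2) supplies an $R_\theta$-path from $x/\theta$ to every point of $\mathbf{F}/\theta$, and no such path can leave $x/\theta$; hence $\mathbf{F}/\theta$ has a single point and $\theta$ is trivial. (Alternatively one propagates the property ``$R[\,\cdot\,]\subseteq x/\theta$'' from $x$ along all $R$-paths, again using (3).) I expect the whole lemma to be routine; if there is any delicate point at all it is merely pinning down the reading of ``connected'' so that the path arguments in (2) and (5) are licit.
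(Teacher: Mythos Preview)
Your proof is correct in all five items; each step is sound and the bookkeeping in (3) is handled properly. The paper does not supply a proof of this lemma at all---it is stated as a list of basic facts and the text moves on immediately---so your argument simply fills in routine details the authors chose to omit.
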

In what follows we may omit the subscript $\theta$ in $R_{\theta}$
whenever there is no risk of confusion.

\subsection{Congruences of a line}

For an integer $n\geq0$ let $\mathbf{L}_{n}\doteq\left\langle \{0,\ldots,n\},R\right\rangle $
where $x\rr y$ iff $\left|x-y\right|\leq1$. Note that $x\rr x$
for all $x$. We call such a frame a \emph{line}. The remainder of
this section is devoted to characterizing the congruences of $\mathbf{L}_{n}$.
\begin{lem}
\label{lem:basicas-con-Ln}Let $\theta$ be a congruence of $\mathbf{L}_{n}$.
\begin{enumerate}
\item $\mathbf{L}_{n}/\theta$ is a line.
\item If $a\thr b$ and $a<b$, then there is $x\in\left[a,b\right)$ such
that $v\left(x/\theta\right)\leq2$.
\item Suppose $\theta$ is nontrivial, and let $k=\min\left\{ x\in\left[1,n\right]\mid v\left(x/\theta\right)=2\right\} $.
Then $\mathbf{L}_{n}/\theta$ is isomorphic with $\mathbf{L}_{k}$
and $0/\theta,\ldots,k/\theta$ are all the equivalence classes of
$\theta$.
\end{enumerate}
\end{lem}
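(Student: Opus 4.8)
The plan is to prove the three items in the stated order, each using the previous ones together with Lemma~\ref{lem:basicas-congruencias}.

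\emph{Item (1).} The relation $R_\theta$ is reflexive and symmetric because $R$ is, and $\mathbf{L}_n/\theta$ is connected by Lemma~\ref{lem:basicas-congruencias}(2). By Lemma~\ref{lem:basicas-congruencias}(4), every vertex $p=x/\theta$ satisfies $v(p)\le v(x)\le 3$, and since $p\in R_\theta[p]$ this means $p$ has at most two neighbours distinct from itself. A finite connected graph whose loopless part has maximum degree $\le 2$ is a path or a cycle; a cycle on $m\ge 3$ vertices has $v\equiv 3$, which is incompatible with $v(0/\theta)\le v(0)=2$, while the remaining small cases are paths. Hence $\mathbf{L}_n/\theta\iso\mathbf{L}_m$ for some $m\ge 0$, and I fix this identification throughout.

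\emph{Item (2).} Suppose toward a contradiction that $v(x/\theta)=3$ for every $x\in[a,b)$. Since $v(x/\theta)\le v(x)$ and $v(0)=2$, this forces $a\ge 1$; then for each $x\in[a,b)$ we have $R_\theta[x/\theta]=R[x]/\theta=\{(x-1)/\theta,\,x/\theta,\,(x+1)/\theta\}$ by Lemma~\ref{lem:basicas-congruencias}(3), so the assumption $v(x/\theta)=3$ says precisely that these three classes are pairwise distinct. Writing $w_j:=(a+j)/\theta$ for $0\le j\le b-a$, this means that all consecutive triples $w_{j-1},w_j,w_{j+1}$ are distinct; since $w_0,\dots,w_{b-a}$ is a walk in the path $\mathbf{L}_m$, the condition $w_{j+1}\ne w_{j-1}$ together with $w_j$ adjacent to both forces $w_{j+1}-w_j=w_j-w_{j-1}$, so $w_{j+1}-w_j=s$ for a fixed $s\in\{-1,+1\}$ and all $j$. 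Then $w_{b-a}=w_0+s(b-a)\ne w_0$, contradicting $w_{b-a}=b/\theta=a/\theta=w_0$ (using $a\thr b$). Hence some $x\in[a,b)$ has $v(x/\theta)\le 2$.

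\emph{Item (3).} By Lemma~\ref{lem:basicas-congruencias}(3),(5) and connectedness of $\mathbf{L}_n$, no quotient vertex can have $v=1$ (that would force $\theta$ trivial); with $\theta$ nontrivial this gives $m\ge 1$ and $v(n/\theta)=2$, so the set defining $k$ contains $n$ and hence $1\le k\le n$. By minimality of $k$ every $x\in[1,k)$ has $v(x/\theta)\ne 2$, hence $v(x/\theta)=3$; also $v(0/\theta)=2$ and, exactly as in (2), $0/\theta\ne 1/\theta$. The arithmetic-progression argument of (2), applied to the segment $0/\theta,1/\theta,\dots,k/\theta$, now shows these classes are pairwise distinct and occupy $k+1$ consecutive vertices of $\mathbf{L}_m$; choosing coordinates so that the endpoint $0/\theta$ equals $0$, we get $j/\theta=j$ for $0\le j\le k$. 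Since $v(k/\theta)=2$ the vertex $k$ is an endpoint of $\mathbf{L}_m$, and it is not $0$ because $k\ge 1$, so $k=m$. Therefore $\mathbf{L}_n/\theta=\mathbf{L}_k$, the classes $0/\theta,\dots,k/\theta$ are distinct and exhaust $F/\theta=\{0,\dots,k\}$, so they are exactly the equivalence classes of $\theta$, and $j\mapsto j/\theta$ is the desired isomorphism $\mathbf{L}_k\to\mathbf{L}_n/\theta$.

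The main obstacle is the walk argument in (2): one has to check carefully that ``all consecutive triples distinct'' really prevents any turnaround in the path, and to treat the boundary indices $j=0,\,b-a$ and the degenerate case $b=a+1$ uniformly (the case $a=0$ being immediate from $v(0)=2$). Once this is in place, (1) and (3) are essentially bookkeeping; the only delicate point in (3) is the use of Lemma~\ref{lem:basicas-congruencias}(5) to tie ``$\theta$ nontrivial'' to the absence of loop-degree-one vertices, which is what makes $k$ well defined.
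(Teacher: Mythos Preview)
Your proof is correct. The main difference from the paper lies in item~(2): the paper argues by induction on $b$, working directly with the bisimulation condition (it observes $b-1/\theta\in R[a/\theta]=\{(a-1)/\theta,a/\theta,(a+1)/\theta\}$ and reduces to a smaller instance), whereas you first invoke item~(1) to know the quotient is a path $\mathbf{L}_m$ and then run a walk-monotonicity argument there. Your route is arguably cleaner and more geometric once item~(1) is in hand; the paper's route is more self-contained and does not depend on having classified the quotient. For item~(3) the ideas coincide, but the paper derives pairwise distinctness of $0/\theta,\dots,k/\theta$ by appealing to item~(2), while you reuse the walk argument directly; functionally these are the same. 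One minor remark: your phrase ``exactly as in (2), $0/\theta\neq 1/\theta$'' is slightly misleading, since that particular fact comes straight from nontriviality via Lemma~\ref{lem:basicas-congruencias}(5), not from the walk argument.
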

\begin{proof}
1. Note that by Lemma \ref{lem:basicas-congruencias} $\mathbf{L}_{n}/\theta$
must be connected and $v\left(x/\theta\right)\leq3$ for all $x$.
Furthermore, $v\left(0/\theta\right)\leq2$ and thus the quotient
has to be a line.

\noindent 2. If $a=0$ there is nothing to prove since $v\left(0/\theta\right)\leq2$,
so we may suppose $a>0$. If $b=a+1$, then $R\left[a/\theta\right]=R\left[a\right]/\theta=\{a-1/\theta,a/\theta,a+1/\theta\}$,
but this is a contradiction since $a\thr a+1$ and $\theta$ is nontrivial.
If $b=a+2$, then $R\left[a+1/\theta\right]=R\left[a+1\right]/\theta=\{a/\theta,a+1/\theta,a+2/\theta\}$,
and it follows that $v\left(a+1/\theta\right)\leq2$. Assume $b-a>2$.
We proceed now by induction in $b$. The case $b=1$ is vacuously
true (since $a>0$). Suppose $b>1$ and observe that 
\[
b-1/\theta\in R\left[b/\theta\right]=R\left[a/\theta\right]=R\left[a\right]/\theta=\{a-1/\theta,a/\theta,a+1/\theta\}.
\]
If $b-1/\theta\in\{a/\theta,a+1/\theta\}$, we are done by inductive
hypothesis (note that by our assumptions $a+1<b-1$). So it only remains
to deal with case $b-1\thr a-1$. By inductive hypothesis there is
$x\in\left[a-1,b-1\right)$ such that $v\left(x/\theta\right)\leq2$.
If $x=a-1$ then $v\left(b-1/\theta\right)=v\left(a-1/\theta\right)\leq2$.
If on the other hand $x\neq a-1$, then $x\in\left[a,b\right)$.

\noindent 3. Observe that such a $k$ must always exist. In fact,
by 1 we know that $\mathbf{L}_{n}/\theta$ is a line, and since $\theta$
is not trivial this line has more than one point. One of the endpoints
is $0/\theta$ so there must be a nonzero element whose equivalence
class is the other endpoint. Also observe that as $\theta$ is not
trivial we have $v\left(x/\theta\right)>1$ for all $x$.

If there are $a,b\in\left[0,k\right]$ with $a\thr b$ and $a<b$,
then by 2 there is $x\in\left[a,b\right)$ such that $v\left(x/\theta\right)=2$,
in contradiction with the minimality of $k$. Thus $0/\theta,\ldots,k/\theta$
are pairwise distinct and $\left(0/\theta\right)\rr\left(1/\theta\right)\rr\cdots\rr\left(k/\theta\right)$.
Finally, as $\mathbf{L}_{n}/\theta$ is a line, there cannot be any
more equivalence classes.
\end{proof}
Given a congruence $\theta$ of $\mathbf{L}_{n}$ we define the \emph{step}
of $\theta$ to be the unique $k$ such that $\mathbf{L}_{n}/\theta$
is isomorphic with $\mathbf{L}_{k}$, i.e., the step of $\theta$
is $\left|L_{n}/\theta\right|-1$. Observe that 3 in the above lemma
shows how to compute the step of a nontrivial congruence. A \emph{rest}
of $\theta$ is a pair in $\theta$ of the form $\left\langle r,r+1\right\rangle $.
Here $r$ is the \emph{left part} and $r+1$ is the \emph{right part}
of the rest. We write \emph{$\theta$ has a rest at $r$} to express
that $\left\langle r,r+1\right\rangle $ is a rest of $\theta$. The
choice of terminology shall become clear once we introduce the geometrical
representation of congruences as foldings of the line.

Next we introduce a family of equivalence relations on the set of
integers. Given a (possibly empty) strictly increasing sequence of
non-negative integers $\bar{r}=\left\langle r_{1},\ldots,r_{m}\right\rangle $
let $\Delta_{\bar{r}}:\mathbb{Z}\rightarrow\mathbb{Z}$ be defined
by 
\[
\Delta_{\bar{r}}\left(x\right)\doteq\left|\left\{ i\mid r_{i}<x\right\} \right|.
\]
Observe that when $\bar{r}$ is the empty sequence then $\Delta_{\bar{r}}$
is constantly $0$. We often drop the subscript in $\Delta_{\bar{r}}$
when $\bar{r}$ is clear from the context.

For a positive integer $q$ let $\equiv_{q}$ denote the equivalence
relation modulo $q$ on the integers.
\begin{defn}
For an integer $k\geq1$ and a (possibly empty) strictly increasing
sequence of non-negative integers $\bar{r}$, let $\conr{k}{r}$ be
the binary relation defined on the set of integers by
\[
x\conr{k}{r}y\textnormal{ iff }\left[x-\Delta_{\bar{r}}\left(x\right)\equiv_{2k}y-\Delta_{\bar{r}}\left(y\right)\right]\textnormal{ or }\left[x-\Delta_{\bar{r}}\left(x\right)\equiv_{2k}-y+\Delta_{\bar{r}}\left(y\right)\right].
\]

\end{defn}
When $\bar{r}$ is the empty sequence we just write $\con{k}$. Note
that in this case we have $x\con{k}y$ iff $x\equiv_{2k}\pm y$.

To avoid tiresome repetitions, from this point onward $k$ shall always
denote a positive integer and $\bar{r}$ a strictly increasing sequence
of non-negative integers of length $\left|\bar{r}\right|$. We write
$a\in\bar{r}$ to signify that $a$ equals some entry of $\bar{r}$.

It is straightforward to check that $\conr{k}{r}$ is an equivalence
relation on $\mathbb{Z}$. Thus, so is its restriction to $\left\{ 0,\ldots,n\right\} $,
which we shall also denote by $\conr{k}{r}$. To get a better understanding
of these relations it is convenient to introduce a graphical representation
for them. To make a picture of $\conr{k}{r}$ on $\mathbf{L}_{n}$
we draw $\mathbf{L}_{n}$ `folded' in such a way that two points are
at the same height if and only if they are $\conr{k}{r}$-related.
Our convention is to place $0$ (and thus every point in the same
block) at the bottom level. Figure \ref{fig:ej-plegados} shows two
examples.

\begin{figure}
\begin{centering}
\subfloat{\begin{tikzpicture}[line cap=round,line join=round,>=triangle 45,x=0.7cm,y=0.7cm]
\draw   (10,10)-- (11,11)-- (12,10)-- (13,11)-- (14,10)-- (15,11);
\fill [color=black] (10,10) circle (1.6pt);
\draw[color=black] (10.04,10.4) node {$0$};
\fill [color=black] (11,11) circle (1.6pt);
\draw[color=black] (11.01,11.4) node {$1$};
\fill [color=black] (12,10) circle (1.6pt);
\draw[color=black] (12.01,10.4) node {$2$};
\fill [color=black] (13,11) circle (1.6pt);
\draw[color=black] (13,11.4) node {$3$};
\fill [color=black] (14,10) circle (1.6pt);
\draw[color=black] (14,10.4) node {$4$};
\fill [color=black] (15,11) circle (1.6pt);
\draw[color=black] (14.99,11.4) node {$5$};
\end{tikzpicture}}\qquad{}\subfloat{\begin{tikzpicture}[line cap=round,line join=round,>=triangle 45,x=0.7cm,y=0.7cm]
\draw   (10,10)-- (11,11)-- (12,12)-- (13,12)-- (14,11)-- (15,10);
          \fill [color=black] (10,10) circle (1.6pt);           
\draw[color=black] (10.04,10.5) node {$0$};
          \fill [color=black] (11,11) circle (1.6pt);
          \draw[color=black] (11.01,11.5) node {$1$};
          \fill [color=black] (12,12) circle (1.6pt);
          \draw[color=black] (12.01,12.4) node {$2$};
          \fill [color=black] (13,12) circle (1.6pt);
          \draw[color=black] (13,12.4) node {$3$};
          \fill [color=black] (14,11) circle (1.6pt);
          \draw[color=black] (14,11.5) node {$4$};
          \fill [color=black] (15,10) circle (1.6pt);
          \draw[color=black] (14.99,10.5) node {$5$};
        \end{tikzpicture}}
\par\end{centering}

\protect\caption{Relations $\left\langle 1\right\rangle $ and $\left\langle 2;2\right\rangle $
on $\mathbf{L}_{5}$ }
\label{fig:ej-plegados}
\end{figure}
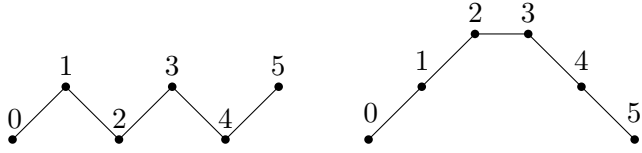

Next we show that every congruence of $\mathbf{L}_{n}$ is of this
type.
\begin{lem}
\label{lem:toda_con_es_un_par}Let $\theta$ be a nontrivial congruence
of $\mathbf{L}_{n}$ with step $k$. Suppose\linebreak{}
$\mbox{\ensuremath{\left\{ x\mid x\thr x+1\right\} =\left\{ r_{1},\ldots,r_{m}\right\} }}$,
where $m\geq0$ and $r_{1}<\ldots<r_{m}$ . Then $\theta={\conr{k}{r}}$.\end{lem}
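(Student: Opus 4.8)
The plan is to show that the two equivalence relations $\theta$ and $\conr{k}{r}$ on $\{0,\dots,n\}$ coincide by first checking that $\conr{k}{r}$ is itself a congruence of $\mathbf{L}_n$ with step $k$ and rests exactly at $r_1,\dots,r_m$, and then invoking a rigidity argument: a nontrivial congruence of a line is determined by its step together with its set of rests. The key observation driving everything is that the map $f(x)=x-\Delta_{\bar r}(x)$ is a ``folding coordinate''. It increases by $1$ when we move from $x$ to $x+1$ unless $x\in\bar r$, in which case $f(x+1)=f(x)$; so along the line $f$ traces out the walk $0,1,2,\dots$ with a repeat at each $r_i$. Composing with the ``zig-zag'' function $z\colon \mathbb Z\to\{0,\dots,k\}$ that reduces modulo $2k$ and then reflects (i.e. $z(t)=\min(t\bmod 2k,\,2k-(t\bmod 2k))$), we get that $x\conr{k}{r}y$ iff $z(f(x))=z(f(y))$, and $z\circ f$ realizes the folding picture of Figure~\ref{fig:ej-plegados}.

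First I would record the basic arithmetic of $\Delta_{\bar r}$: for $0\le x<n$ one has $\Delta_{\bar r}(x+1)-\Delta_{\bar r}(x)=1$ if $x\in\bar r$ and $=0$ otherwise, hence $|f(x+1)-f(x)|\le 1$, which already shows $\conr{k}{r}$ is a bisimulation on all of $\mathbb Z$ (if $x\conr{k}{r}x'$ and $|y-x|\le 1$, move $y$ appropriately so that $f$-values track); restricting to $\{0,\dots,n\}$ needs the mild care that a witness $y'$ may be pushed off the endpoints, but the reflecting nature of $z$ handles that, because $z\circ f$ bounces back at $0$ and at $k$ exactly where the line does. So $\conr{k}{r}\in\Con\mathbf{L}_n$. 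Next I would compute its invariants: since $f(0)=0$ and $f$ moves by unit steps, the image of $z\circ f$ is all of $\{0,\dots,k'\}$ where $k'=\max_{x} z(f(x))$; one checks $k'=k$ using that $\{x:x\thr x+1\}=\bar r$ forces $f(n)$ and the number of $\bar r$-entries to be compatible with step $k$ — this is where the hypothesis on $\bar r$ is consumed. Likewise, $\conr{k}{r}$ has a rest at $r$ precisely when $f(r+1)=f(r)$, i.e. precisely when $r\in\bar r$; so $\conr{k}{r}$ has the same step and the same rests as $\theta$.

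Finally I would prove the rigidity step: if $\theta$ and $\theta'$ are nontrivial congruences of $\mathbf{L}_n$ with the same step $k$ and the same set of rests, then $\theta=\theta'$. By Lemma~\ref{lem:basicas-con-Ln}(3), both quotients are (isomorphic to) $\mathbf{L}_k$ and the classes $0/\theta,\dots,k/\theta$ are listed in order along the quotient line; so the class of $x$ is determined by ``which quotient vertex $x$ maps to'', and by Lemma~\ref{lem:basicas-congruencias}(1) the sequence of quotient vertices visited as $x$ runs $0,1,\dots,n$ is an $R_\theta$-walk in $\mathbf{L}_k$ starting at $0$, moving by at most one at each step. It moves by $0$ exactly at the rests (that's the definition of a rest), and I claim it never stalls at an interior vertex without a rest and never does anything but reflect at the endpoints — the endpoint behavior because $v(0/\theta)\le 2$ and $v(k/\theta)\le2$ (Lemma~\ref{lem:basicas-con-Ln}), and the ``must move'' claim because at a non-rest position consecutive points are in different classes, and two classes adjacent in $\mathbf{L}_k$ differ by exactly $1$ in the linear order. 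Hence the walk is forced: it is the unique reflecting walk on $\{0,\dots,k\}$ that pauses exactly at the prescribed rest positions, and this walk is visibly $z\circ f$. Therefore $x/\theta$ depends only on $z(f(x))$, giving $\theta=\conr{k}{r}$.

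The main obstacle I anticipate is the bookkeeping in the rigidity step around the endpoints: one must rule out the walk ``reflecting early'' (behaving as if there were a rest at $0$ or at $k$) or overshooting, and carefully argue that between two consecutive rests the walk is strictly monotone, using only that $v(x/\theta)\le 2$ fails at interior vertices without adjacent rests on both sides. Handling the edge cases $m=0$ (no rests, so $n=2k$ and $\theta=\con{k}$) and rests at $0$ or at $n-1$ cleanly is the fiddly part; everything else is the routine arithmetic of $\Delta_{\bar r}$ and $z$ sketched above.
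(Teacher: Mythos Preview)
Your rigidity argument is essentially the paper's proof, repackaged. The paper proves by induction on $x$ that $\theta$ and $\conr{k}{r}$ agree on $[0,x]$, splitting into cases according to whether the quotient image $j$ of $x-1$ is $0$, $k$, or strictly between; this is exactly your ``the walk $h(x)$ is forced to be $z\circ f$.'' Your key observation that the walk cannot reverse at an interior vertex (because there $v(x/\theta)=3$, so $x-1,x,x+1$ lie in three distinct classes) is the content of the paper's case $1<j<k$, where they use that $\{(x-2)/\theta,(x-1)/\theta,x/\theta\}$ must coincide with the three distinct classes $\{(j-1)/\theta,j/\theta,(j+1)/\theta\}$.

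Where you diverge from the paper is your step~1, and this detour is both unnecessary and incomplete. You want to first verify that $\conr{k}{r}$ is a congruence on $\{0,\dots,n\}$, but for the bisimulation condition to survive the restriction at the endpoint $n$ you need $f(n)=n-m$ to be a multiple of $k$---a fact you have not established, and which in fact only follows \emph{from} the walk analysis of $\theta$. The paper avoids this circularity by never proving $\conr{k}{r}$ is a congruence in advance: it shows directly that $\theta$ coincides with $\conr{k}{r}$ as a relation, and the fact that $\conr{k}{r}$ is a congruence is then a corollary (stated separately as Lemma~\ref{lem:carac-pares-que-son-cong}). Your own rigidity paragraph already does this---you conclude $h=z\circ f$ for $\theta$ alone, which gives $x\thr y \iff z(f(x))=z(f(y)) \iff x\conr{k}{r}y$ without ever invoking a second congruence $\theta'$. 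So drop step~1 and state rigidity as a statement about a single congruence; what remains is the paper's argument.
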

\begin{proof}
Let us write $\Delta$ for $\Delta_{\bar{r}}$. Our first observation
is that point 3 of Lemma \ref{lem:basicas-con-Ln} implies $r_{1}\geq k$,
and hence 
\begin{equation}
\Delta\left(j\right)=0\text{ for all }j\in\left\{ 0,\ldots,k\right\} .\label{eq:delta_es_0}
\end{equation}
We prove by induction on $x$ that $\theta$ and $\conr{k}{r}$ agree
on the interval $\left[0,x\right]$. This is easily seen to be true
for $x\leq k$ using 3 of Lemma \ref{lem:basicas-con-Ln}, so assume
$x>k$. Note that $x-1\thr x$ iff $x-1\conr{k}{r}x$, thus we may
suppose 
\begin{equation}
x-1/\theta\neq x/\theta,\label{eq:x-1_norel_x}
\end{equation}
and so 
\begin{equation}
\Delta\left(x-1\right)=\Delta\left(x\right).\label{eq:deltas_iguales}
\end{equation}
Let $j\in\left[0,k\right]$ be such that $x-1\thr j$. By inductive
hypothesis we have

\[
x-1-\Delta\left(x-1\right)\equiv_{2k}j\text{ or }x-1-\Delta\left(x-1\right)\equiv_{2k}-j.
\]
There are three cases to consider.\medskip{}

\noindent Case $j=0$. We have $x-1-\Delta\left(x-1\right)\equiv_{2k}0$,
which by (\ref{eq:deltas_iguales}) implies $x-\Delta\left(x\right)\equiv_{2k}1$,
and this in combination with (\ref{eq:delta_es_0}) yields $x\conr{k}{r}1$.
Also, 
\[
x/\theta\in R\left[x-1/\theta\right]=R\left[0/\theta\right]=\left\{ 0/\theta,1/\theta\right\} ,
\]
 so (\ref{eq:x-1_norel_x}) says that $x\thr1$.\medskip{}

\noindent Case $j=k$. Note that 
\[
x/\theta\in R\left[x-1/\theta\right]=R\left[k/\theta\right]=\left\{ k-1/\theta,k/\theta\right\} ,
\]
and so by (\ref{eq:x-1_norel_x}) we have $x\thr k-1$. Also, $x-1-\Delta\left(x-1\right)\equiv_{2k}-k$
and (\ref{eq:deltas_iguales}) imply $x-\Delta\left(x\right)\equiv_{2k}-\left(k-1\right)$,
which in view of (\ref{eq:delta_es_0}) says that $x\conr{k}{r}k-1$.\medskip{}

\noindent Case $1<j<k$. Observe that 
\[
\left\{ x-2/\theta,x-1/\theta,x/\theta\right\} =R\left[x-1/\theta\right]=R\left[j/\theta\right]=\left\{ j-1/\theta,j/\theta,j+1/\theta\right\} ,
\]
where the three elements in the right-hand side set are different.
So, 
\begin{equation}
\left\{ x-2/\theta,x/\theta\right\} =\left\{ j-1/\theta,j+1/\theta\right\} ,\label{eq:2a2}
\end{equation}
and since $x-2/\theta\neq x-1/\theta$
\[
\Delta\left(x-1\right)=\Delta\left(x-2\right).
\]
Suppose first that 
\[
x-1-\Delta\left(x-1\right)\equiv_{2k}j.
\]
Then,
\begin{align*}
x-2-\Delta\left(x-2\right) & \equiv_{2k}j-1\\
x-\Delta\left(x\right) & \equiv_{2k}j+1,
\end{align*}
or equivalently 
\begin{align*}
x-2 & \conr{k}{r}j-1\\
x & \conr{k}{r}j+1.
\end{align*}
So, by inductive hypothesis $x-2\thr j-1$, and from (\ref{eq:2a2})
we obtain $x\thr j+1$.

The case $x-1-\Delta\left(x-1\right)\equiv_{2k}-j$ is handled in
the same way.\end{proof}
\begin{cor}
A congruence of $\mathbf{L}_{n}$ is determined by its step and rests.
\end{cor}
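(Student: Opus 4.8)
The plan is to read this off directly from Lemma~\ref{lem:toda_con_es_un_par}, which already contains all the substance. First I would separate the trivial congruence: a congruence $\theta$ of $\mathbf{L}_n$ has step $0$ exactly when $\left|L_n/\theta\right|=1$, i.e.\ exactly when $\theta$ is trivial. So the step alone distinguishes the trivial congruence from all other congruences, and nothing further needs to be checked when the step is $0$.

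Now suppose $\theta$ and $\theta'$ are congruences of $\mathbf{L}_n$ with the same step $k\geq1$ and the same rests. Since a rest is by definition a pair of the form $\left\langle r,r+1\right\rangle$, two congruences have the same rests precisely when the sets of left parts of their rests coincide; and for a congruence $\sigma$ this set of left parts is exactly $\left\{x\mid x\mathrel{\sigma}x+1\right\}$. Write the common set, in increasing order, as $\bar r=\left\langle r_1,\dots,r_m\right\rangle$. Both $\theta$ and $\theta'$ are nontrivial, since their step is positive, so Lemma~\ref{lem:toda_con_es_un_par} applies to each and gives $\theta={\conr{k}{r}}$ and $\theta'={\conr{k}{r}}$, whence $\theta=\theta'$.

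I do not anticipate any genuine obstacle here: the real work is done by Lemma~\ref{lem:toda_con_es_un_par}, together with point~3 of Lemma~\ref{lem:basicas-con-Ln}, which is what makes the step of a nontrivial congruence well defined (indeed computable as $\min\left\{x\in\left[1,n\right]\mid v\left(x/\theta\right)=2\right\}$). The only small point requiring care is the bookkeeping observation that the sequence $\bar r$ extracted from the rests of $\theta$ is precisely the sequence $\bar r$ that appears in the statement of Lemma~\ref{lem:toda_con_es_un_par}; this is immediate from the definitions of \emph{rest}, \emph{left part}, and of the set $\left\{x\mid x\thr x+1\right\}$.
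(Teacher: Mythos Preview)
Your proposal is correct and is precisely the intended (and in the paper, unwritten) deduction: the corollary is stated without proof as an immediate consequence of Lemma~\ref{lem:toda_con_es_un_par}, and you have simply spelled out that immediate step, including the minor bookkeeping for the trivial case.
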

The next lemma tells us which of the relations $\conr{k}{r}$ are
congruences of $\mathbf{L}_{n}$. Note that when considering $\conr{k}{r}$
on $\left[0,n\right]$ we can safely assume $\bar{r}\subseteq\left[0,n-1\right]$,
due to the fact that adding or removing entries of $\bar{r}$ greater
than $n-1$ does not change the values of $\Delta_{\bar{r}}$ on $\left[0,n\right]$.
\begin{lem}
\label{lem:carac-pares-que-son-cong}Let $n\geq3$, and suppose $r_{1},\ldots,r_{m}=\bar{r}\subseteq\left[0,n-1\right]$
and $k\leq\frac{n}{2}$. Then $\conr{k}{r}$ is a nontrivial congruence
of $\mathbf{L}_{n}$ if and only if the following conditions hold:
\begin{enumerate}
\item $r_{1}\neq0$, $r_{m}\neq n-1$ and $r_{i+1}-r_{i}>2$, for all $i\in\left\{ 1,\ldots,\left|\bar{r}\right|-1\right\} $,
\item $k$ divides $r_{i}-i+1$,  for all $i\in\left\{
  1,\ldots,\left|\bar{r}\right|\right\} $, and 
\item $k$ divides $n-\left|\bar{r}\right|$.
\end{enumerate}
\end{lem}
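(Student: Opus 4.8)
The plan is to prove both directions by unravelling what it means for $\conr{k}{r}$ to be a congruence of $\mathbf{L}_n$, using the characterization of congruences from Lemma~\ref{lem:basicas-con-Ln}(3) together with the arithmetical definition of $\conr{k}{r}$. The central technical object is the map $x\mapsto x-\Delta_{\bar r}(x)$, which I will think of as a ``folding function'' $f:\{0,\dots,n\}\to\mathbb{Z}$; conditions (1)--(3) are precisely what guarantee that the level sets of $f$ modulo the folding-by-$2k$ operation behave like the fibers of a genuine quotient map onto $\mathbf{L}_k$. Throughout I will write $\Delta$ for $\Delta_{\bar r}$ and note that $\Delta$ is a non-decreasing step function increasing by exactly $1$ at each $r_i$, so that $f(x+1)-f(x)\in\{0,1\}$ with the value $0$ occurring exactly when $x\in\bar r$.

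For the forward direction, assume $\conr{k}{r}$ is a nontrivial congruence of step $k'$. First I would show $k'=k$: by Lemma~\ref{lem:basicas-con-Ln}(3) the step is the least $x\in[1,n]$ with $v(x/\theta)=2$, and a direct computation with the definition of $\conr{k}{r}$ shows $0\conr{k}{r}2k$ while $j\conr{k}{r}(2k-j)$ fails for $j<k$ inside $[0,k]$ (here one uses $\Delta\equiv 0$ on $[0,k]$, which follows from the congruence property forcing $r_1\ge k$, cf.\ the proof of Lemma~\ref{lem:toda_con_es_un_par}), so the first ``bounce'' is exactly at $k$. Given that, condition (2) is forced because $\theta$ has a rest at $r$ iff $r\conr{k}{r}r+1$, and expanding that relation (the two sides differ by one in the ``$x-\Delta(x)$'' coordinate, so only the $\equiv_{2k}-y+\Delta(y)$ disjunct can hold) yields $2(r_i-\Delta(r_i))\equiv_{2k}0$, i.e.\ $k\mid r_i-\Delta(r_i)$; since $\Delta(r_i)=i-1$ this is $k\mid r_i-i+1$. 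Condition (3): the top class $n/\theta$ must be an endpoint of $\mathbf{L}_n/\theta\cong\mathbf{L}_k$, so $v(n/\theta)=2$, meaning $n\not\conr{k}{r}n+1$, which forces $n$ itself (in folded coordinates) to sit at level $0$ or level $k$ of the fold; computing $f(n)=n-|\bar r|$ and using that the endpoints of the folded picture are the integers $\equiv 0\pmod{k}$, this gives $k\mid n-|\bar r|$. For (1): $r_1\ne 0$ and the gap condition $r_{i+1}-r_i>2$ are exactly the statements that $\bar r=\{x:x\thr x+1\}$ can actually arise as the rest-set of a congruence — if two rests were within distance $2$, or a rest occurred at $0$, one checks (as in Lemma~\ref{lem:basicas-con-Ln}(2)'s proof) that an interior class would be forced to have valence $\le 1$ or that $\theta$ would collapse, contradicting nontriviality; and $r_m\ne n-1$ because a rest at $n-1$ would make $n/\theta$ an interior point, not an endpoint.

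For the converse, assume (1)--(3) and show $\conr{k}{r}$ is a nontrivial congruence. The cleanest route is to verify the permutation criterion directly: since $R$ is symmetric, I must show $\conr{k}{r}\circ R = R\circ\conr{k}{r}$, equivalently that whenever $x\conr{k}{r}x'$ and $|x-y|\le 1$, there is $y'$ with $|x'-y'|\le 1$ and $y\conr{k}{r}y'$. Working in folded coordinates $f$, the relation $x\conr{k}{r}x'$ says $f(x)\equiv\pm f(x')\pmod{2k}$, and the local behavior of $f$ (it moves by $0$ or $1$ at each step, with a flat step exactly at each $r_i$) combined with conditions (1) and (2) ensures that the ``$\pm$'' reflection is compatible with which neighbor one picks: condition (2) aligns each rest with a fold point so that the reflected picture is again a valid $f$-picture, and condition (1) keeps rests far enough apart that no two folds interfere. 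Then nontriviality and step exactly $k$ follow from condition (3): $k\mid n-|\bar r|$ means $f(n)=n-|\bar r|$ is a multiple of $k$, so $n$ lands on an endpoint of the $2k$-fold, giving $n/\theta$ valence $2$ and (together with $r_1\ge k$, which I would extract from (1) and (2) — actually $r_1\ge k$ must be established: from $k\mid r_1-0+\Delta(r_1)\cdot 0$... more carefully $k\mid r_1$ by (2) with $i=1$ since $\Delta(r_1)=0$, and $r_1\ne 0$ by (1), so $r_1\ge k$) the quotient is exactly $\mathbf{L}_k$ by Lemma~\ref{lem:basicas-con-Ln}(3). I expect the main obstacle to be the converse's verification of the permutation/back-and-forth condition uniformly across the several cases (interior vs.\ near a rest vs.\ near a fold point vs.\ near the endpoints $0$ and $n$); the bookkeeping with $\Delta$ and the $\pm$ sign is where the conditions (1)--(3) each get used exactly once, and getting that casework to close cleanly — rather than the arithmetic itself — is the real work.
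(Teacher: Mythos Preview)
Your overall strategy matches the paper's --- verify the bisimulation condition by cases for the converse, and read off (1)--(3) from the quotient structure for the forward direction --- but the forward direction contains two genuine errors.

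First, your derivation of condition (2) is wrong. You claim that at a rest $r_i$ the folded coordinates $r_i-\Delta(r_i)$ and $(r_i+1)-\Delta(r_i+1)$ differ by one, so that only the second disjunct in the definition of $\conr{k}{r}$ can hold, yielding $2(r_i-\Delta(r_i))\equiv_{2k}0$. But $\Delta$ jumps precisely at $r_i$: one has $\Delta(r_i)=i-1$ while $\Delta(r_i+1)=i$, so the two folded values are \emph{equal}. The relation $r_i\conr{k}{r}r_i+1$ therefore holds trivially via the first disjunct and carries no divisibility information whatsoever. The paper's argument is different in kind: since $r_i\thr r_i+1$ one has $v(r_i/\theta)\le 2$, so $r_i/\theta$ is an endpoint of $\mathbf{L}_n/\theta\cong\mathbf{L}_k$, hence $r_i\thr 0$ or $r_i\thr k$; expanding either of \emph{those} relations gives $k\mid r_i-(i-1)$. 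Condition (3) is obtained the same way from $v(n/\theta)\le 2$.

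Second, your argument that the step equals the parameter $k$ is circular. You need $\Delta\equiv 0$ on $[0,k]$, i.e.\ $r_1\ge k$, and you justify this by pointing to the proof of Lemma~\ref{lem:toda_con_es_un_par}; but that proof deduces $r_1\ge k$ \emph{from} the hypothesis that $k$ is already the step. The paper breaks the circle with a direct contradiction (placed after (1) has been established): if $r_1<k$, one checks from the definition of $\conr{k}{r}$ that $r_1$ is the least positive element with $v(\cdot/\theta)=2$, so $r_1$ is the step; then $r_1+2$ must be $\theta$-related to some $j\in[0,r_1]$, and since $\Delta(r_1+2)=1$ this forces $2k\mid (r_1+1)\pm j$ with $0\le j\le r_1<k$, which is impossible.
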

\begin{proof}
Let us write $\theta$ for $\conr{k}{r}$. The lemma for $m=0$ is
an easy exercise and we leave it to the reader. Assume $m\geq1$.
We prove first that if conditions (1-3) hold, then $\theta$ is a
congruence of $\mathbf{L}_{n}$. As we know that $\theta$ is an equivalence
relation we only need to check that given $a,a',b\in\left[0,n\right]$
such that $a'\thr a\rr b$, there is $b'$ satisfying $a'\rr b'\thr b$.

When $a=a'$ there is nothing to prove, so assume $a\neq a'$. Also,
if $a\thr b$ we can take $b'=a'$, so we suppose 
\begin{equation}
\left\langle a,b\right\rangle \notin\theta.\label{eq:ab_not_in_theta}
\end{equation}
There are four cases to consider.

\noindent Case $a-\Delta\left(a\right)\equiv_{2k}a'-\Delta\left(a'\right)$
and $b=a+1$. 

Observe that (\ref{eq:ab_not_in_theta}) implies $a\notin\bar{r}$.
We consider first the sub-case $a'\in\bar{r}$. By 1 we know that $a'\neq0$
and $a'-1\notin\bar{r}$, so $\Delta\left(a'-1\right)=\Delta\left(a'\right)$.
By 2 we have that $k$ divides $a'-\Delta\left(a'\right)$, so $-a'+\Delta\left(a'\right)\equiv_{2k}a'-\Delta\left(a'\right)$.
Hence, 
\[
-\left(a'-1-\Delta\left(a'-1\right)\right)\equiv_{2k}a+1-\Delta\left(a+1\right).
\]
That is, $a'-1\thr a+1$, and we can take $b'=a'-1$. Observe that
the same proof also works for the case $a'=n$ using 3 instead of
2. So the only sub-case left to address is $a'\notin\bar{r}$ and $a'<n$.
Here it is easy to see that $b'=a'+1$ works.\medskip{}

\noindent Case $a-\Delta\left(a\right)\equiv_{2k}a'-\Delta\left(a'\right)$
and $b=a-1$.

From (\ref{eq:ab_not_in_theta}) we obtain $a-1\notin\bar{r}$. If
$a'=0$ or $a'-1\in\bar{r}$ a reasoning analogous to the one used
in the sub case $a'\in\bar{r}$ above shows that $a'+1\thr a-1$,
and so we can take $b'=a'+1$. If $a'\neq0$ and $a'-1\notin\bar{r}$,
then it is straightforward to check that $b'=a'-1$ does the job.\medskip{}

\noindent The two remaining cases are left to the reader.

Next suppose $\theta$ is a nontrivial congruence of $\mathbf{L}_{n}$.
We prove that (1-3) hold. First observe that if 1 does not hold then
there is $x\in[0,n]$ such that $R\left[x\right]\subseteq x/\theta$,
and thus $\theta$ would be trivial by 5 of Lemma \ref{lem:basicas-congruencias}.
To establish 2 and 3 we need to show first that $k\leq r_{1}$. For
the sake of contradiction suppose $r_{1}<k$. By Lemma \ref{lem:basicas-con-Ln},
$v(r_{1})=2$. From the definition of $\conr{k}{r}$ it is easily
checked that $r_{1}$ is the minimal element with such property. So
by Lemma \ref{lem:basicas-con-Ln} we have that $r_{1}$ is the step
of $\theta$ and $0/\theta,\ldots,r_{1}/\theta$ are all the $\theta$-blocks.
In particular $r_{1}+2\thr j$, for some $j\in[0,r_{1}]$. So 
\[
r_{1}+1=r_{1}+2-\Delta\left(r_{1}+2\right)\equiv_{2k}\pm j.
\]
 That is, $2k$ divides either $r_{1}+1-j$ or $r_{1}+1+j$. Both
cases easily yield a contradiction.

Now, if $k\leq r_{1}$ then $k$ is the step of $\theta$, and by
Lemma \ref{lem:basicas-con-Ln} we know that $0/\theta,\ldots,k/\theta$
are all the equivalence classes of $\theta$. Observe that $v\left(r_{i}/\theta\right)=2$,
and thus $r_{i}/\theta\in\left\{ 0/\theta,k/\theta\right\} $, for
all $i\in\left\{ 1,\ldots,m\right\} $. Analogously $n/\theta\in\left\{ 0/\theta,k/\theta\right\} $.
From here 2 and 3 are easily obtained.
\end{proof}
Combining Lemmas \ref{lem:toda_con_es_un_par} and \ref{lem:carac-pares-que-son-cong}
we obtain a complete description of the congruences of a line.
\begin{thm}
\label{th:charact-congruences}The congruences of $\mathbf{L}_{n}$
are the diagonal, $L_{n}\times L$$_{n}$, and all the relations $\conr{k}{r}$
such that:
\begin{itemize}
\item $k\leq\frac{n}{2}$,
\item $\bar{r}\subseteq\left[k,n-k\right]$,
\item $k$ divides $r_{i}-i+1$, for all $i\in\left\{
  1,\ldots,\left|\bar{r}\right|\right\} $, and 
\item $k$ divides $n-\left|\bar{r}\right|$.
\end{itemize}
\end{thm}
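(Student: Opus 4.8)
The plan is to deduce Theorem~\ref{th:charact-congruences} by combining the two preceding lemmas, treating the trivial cases separately and then showing that the conditions listed in Lemma~\ref{lem:carac-pares-que-son-cong} are equivalent to those stated in the theorem. First I would dispose of the degenerate frames: for $n\le 2$ one checks by hand that the only congruences are the diagonal $\mathrm{Id}_{L_n}$ and the full relation $L_n\times L_n$ (and that these coincide with $\con{1}$, etc., when $n\in\{1,2\}$), so the interesting content is the case $n\ge 3$. For $n\ge 3$, Lemma~\ref{lem:toda_con_es_un_par} tells us that every \emph{nontrivial} congruence $\theta$ of $\mathbf{L}_n$ equals $\conr{k}{r}$, where $k$ is the step of $\theta$ and $\bar r$ enumerates $\{x\mid x\thr x+1\}$; note that by the remark preceding Lemma~\ref{lem:carac-pares-que-son-cong} we may and do take $\bar r\subseteq[0,n-1]$. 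Since the step of a nontrivial congruence satisfies $v(0/\theta)=2$ forces $k\ge 1$, and Lemma~\ref{lem:basicas-con-Ln}(3) together with $\mathbf{L}_n/\theta\cong\mathbf{L}_k$ and the fact that $0/\theta,\dots,k/\theta$ exhaust the blocks gives $k\le n/2$, we already have $k\le \frac n2$. Thus every nontrivial congruence is one of the relations $\conr{k}{r}$ appearing in the statement, and conversely Lemma~\ref{lem:carac-pares-que-son-cong} says exactly which $\conr{k}{r}$ (with $k\le n/2$, $\bar r\subseteq[0,n-1]$) are nontrivial congruences.

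The remaining work is to check that condition~(1) of Lemma~\ref{lem:carac-pares-que-son-cong} --- namely $r_1\neq 0$, $r_m\neq n-1$, and $r_{i+1}-r_i>2$ --- combines with conditions (2) and (3) to become precisely the two bullet points ``$\bar r\subseteq[k,n-k]$'' and the divisibility clauses of the theorem. The divisibility bullets (``$k\mid r_i-i+1$'' and ``$k\mid n-|\bar r|$'') are verbatim (2) and (3). So the real identification is: under (2) and (3), the spacing/endpoint condition (1) is equivalent to $\bar r\subseteq[k,n-k]$. For the left endpoint: from (2) with $i=1$, $k\mid r_1$, so $r_1\in\{0,k,2k,\dots\}$; ruling out $r_1=0$ (the $r_1\ne 0$ clause) is the same as demanding $r_1\ge k$. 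Similarly, from (2) with $i=m$ we get $k\mid r_m-m+1$, and combining with (3), $k\mid n-m$, yields $k\mid n-1-r_m$, so $r_m\in\{n-1,n-1-k,\dots\}$; ruling out $r_m=n-1$ is the same as $r_m\le n-1-k<n-k$, hence $r_m\le n-k$. Finally, for the interior spacing: from (2), both $r_i$ and $r_{i+1}$ satisfy $k\mid r_i-i+1$ and $k\mid r_{i+1}-i$, so $r_{i+1}-r_i\equiv 1\pmod k$; hence if $k\ge 2$ the inequality $r_{i+1}-r_i>2$ is automatic once $r_{i+1}>r_i$ except possibly it could equal $1$, which is excluded because $\bar r$ is strictly increasing and $\ge 1$ gaps that are $\equiv 1\bmod k$ with $k\ge 2$ are $\ge 1$; one must be slightly careful when $k=1$, where $r_{i+1}-r_i>2$ is a genuine extra restriction --- but then $\bar r\subseteq[k,n-k]=[1,n-1]$ is vacuous anyway, and I would simply note that for $k=1$ the theorem's bullets do not by themselves capture (1), so the cleanest route is to keep the equivalence at the level ``the conditions (1)--(3) of Lemma~\ref{lem:carac-pares-que-son-cong} hold $\iff$ the four bullets of the theorem hold,'' verifying this equivalence directly.

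I expect the main (minor) obstacle to be exactly that bookkeeping with $k=1$ and small $n$: making sure the reformulation ``$\bar r\subseteq[k,n-k]$'' together with the divisibility conditions really is logically equivalent to conditions (1)--(3), rather than merely implied by them, and that the boundary and gap constraints are neither lost nor double-counted. A careful but routine case analysis modulo $k$, as sketched above, handles it; the spacing clause $r_{i+1}-r_i>2$ needs the observation that consecutive entries of $\bar r$ are congruent modulo $k$ to consecutive integers (so their difference is $\equiv\pm1$ or more precisely $r_{i+1}-r_i\equiv 1\pmod k$), which forces the gap to be large enough whenever $k\ge 2$, while for $k=1$ it is subsumed. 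Once this equivalence is established, the theorem follows immediately: the nontrivial congruences are the $\conr{k}{r}$ satisfying (1)--(3) by Lemmas~\ref{lem:toda_con_es_un_par} and \ref{lem:carac-pares-que-son-cong}, which is exactly the stated list, and the two trivial congruences $\mathrm{Id}_{L_n}$ and $L_n\times L_n$ account for the rest.
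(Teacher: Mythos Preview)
Your overall strategy---derive the theorem by combining Lemmas~\ref{lem:toda_con_es_un_par} and~\ref{lem:carac-pares-que-son-cong}---is precisely the paper's (its entire proof is the single sentence preceding the statement). The extra work you attempt, reconciling condition~(1) of Lemma~\ref{lem:carac-pares-que-son-cong} with the bullet $\bar r\subseteq[k,n-k]$, is exactly what the paper leaves implicit.

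That reconciliation, however, does not go through, and the step where it fails is your treatment of the spacing clause. You correctly deduce $r_{i+1}-r_i\equiv 1\pmod k$, but then assert this ``forces the gap to be large enough whenever $k\ge 2$''. It does not: $r_{i+1}-r_i=1$ satisfies that congruence for every $k$, and nothing in the four bullets excludes it. Concretely, take $n=6$, $k=2$, $\bar r=(2,3)$. All four bullets hold ($2\le 3$; $\{2,3\}\subseteq[2,4]$; $2\mid r_1-0=2$ and $2\mid r_2-1=2$; $2\mid 6-2$), yet the relation $\con{2;2,3}$ has blocks $\{0,6\}$, $\{1,5\}$, $\{2,3,4\}$, and since $R[3]=\{2,3,4\}$ lies entirely in one block, Lemma~\ref{lem:basicas-congruencias}(5) shows this cannot be a nontrivial congruence. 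So the difficulty you flagged at $k=1$ is not peculiar to $k=1$: the spacing requirement $r_{i+1}-r_i>2$ of Lemma~\ref{lem:carac-pares-que-son-cong}(1) (and, for $k=1$, also $r_m\neq n-1$) is genuinely not recoverable from the four bullets, and the theorem as literally written is slightly too permissive. The paper's subsequent rephrasing in terms of foldings---which explicitly includes ``consecutive rests are not allowed'' and ``rests ending at $n$ are not allowed''---captures the intended content; your proof plan works verbatim if you retain condition~(1) of Lemma~\ref{lem:carac-pares-que-son-cong} instead of trying to replace it by $\bar r\subseteq[k,n-k]$.
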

We can rewrite the conditions in the theorem above to characterize
the foldings of $\mathbf{L}_{n}$ that represent nontrivial congruences.
\begin{itemize}
\item The vertical segments have all the same length.
\item The horizontal segments (rests) of the folding must be at the top
and bottom levels.
\item Consecutive rests are not allowed.
\item Rests starting at $0$ or ending at $n$ are not allowed.
\item $n$ must be at the top or bottom level.
\end{itemize}

\subsection{Basic properties of congruences}

It turns out that for two congruences to have a nontrivial join both
must have a highly regular and compatible structure. Thus a common
theme throughout the paper is that most results apply to this case.
In the current section we start to pin down the aforementioned regularity,
and characterize the meet of two congruences with nontrivial join.
The characterization of the join takes quite a bit more of work, and
is handled in the subsequent sections.

For the remainder of the current section we fix a natural number $n$,
and write $\theta$ and $\delta$ to denote congruences of $\mathbf{L}_{n}$. 

We start out with a list of equivalent conditions for a congruence
to be trivial.
\begin{lem}
\label{lem:carac-trivial}Suppose $n\geq2$. The following are equivalent:
\begin{enumerate}
\item $\theta$ is trivial.
\item There is $x$ such that $x-1\thr x\thr x+1$.
\item $0\thr1$.
\item $n-1\thr n$.
\end{enumerate}
\end{lem}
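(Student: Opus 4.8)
The implications $(1)\Rightarrow(3)$ and $(3)\Rightarrow(2)$ are immediate: a trivial congruence is all of $L_n\times L_n$, so in particular $0\thr 1$, and from $0\thr 1$ we get $0\thr 1\thr 2$ (using $0\thr 1$ applied at the point $1$, together with $n\ge 2$ so that $2$ exists). Likewise $(1)\Rightarrow(4)$. So the real content is the converses, i.e.\ $(2)\Rightarrow(1)$, and then $(3)\Rightarrow(1)$ and $(4)\Rightarrow(1)$ which I would deduce either directly or as special cases.

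The cleanest route for $(2)\Rightarrow(1)$ is via point~5 of Lemma~\ref{lem:basicas-congruencias}: if $\mathbf{F}$ is connected and some $x$ has $R[x]\subseteq x/\theta$, then $\theta$ is trivial. Now $\mathbf{L}_n$ is connected, and if $x-1\thr x\thr x+1$ for some $x$ with $1\le x\le n-1$, then $R[x]=\{x-1,x,x+1\}\subseteq x/\theta$, so Lemma~\ref{lem:basicas-congruencias}(5) gives triviality at once. The only gap is the boundary: hypothesis (2) as stated says merely ``there is $x$ such that $x-1\thr x\thr x+1$,'' and one should check whether $x$ is allowed to be $0$ or $n$. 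If we read the statement with $x$ ranging over those values for which $x-1$ and $x+1$ both lie in $[0,n]$ (i.e.\ $1\le x\le n-1$), the argument above is complete. If instead $x=0$ is permitted with the convention that the constraint involving $-1$ is vacuous, then (2) reduces to $0\thr 1$, which is (3); and symmetrically $x=n$ gives (4). So in the boundary reading, $(2)$, $(3)$, $(4)$ are literally the same statement, and it suffices to prove $(3)\Rightarrow(1)$ and $(4)\Rightarrow(1)$.

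For $(3)\Rightarrow(1)$: assume $0\thr 1$. I would show by induction on $j$ that $j\thr j+1$ for all $j\in[0,n-1]$, whence all points are $\theta$-equivalent and $\theta$ is trivial. The base case is the hypothesis. For the inductive step, suppose $j-1\thr j$ and $j\thr j+1$; then $j$ is a point with $R[j]=\{j-1,j,j+1\}\subseteq j/\theta$, and by Lemma~\ref{lem:basicas-congruencias}(5) $\theta$ is already trivial, so in particular $j+1\thr j+2$. (Alternatively, more elementarily: from $0\thr 1$ and the bisimulation condition applied to $0\rr 1$ and $1\rr 2$, one pushes equivalences outward one step at a time.) The case $(4)\Rightarrow(1)$ is symmetric, working downward from $n$, or can be obtained by applying $(3)\Rightarrow(1)$ after the order-reversing automorphism $x\mapsto n-x$ of $\mathbf{L}_n$, which preserves $R$ and hence maps congruences to congruences.

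The main obstacle is not mathematical depth but pinning down the intended quantifier range in (2); once that is fixed, everything follows from Lemma~\ref{lem:basicas-congruencias}(5) and an elementary induction. I would phrase the written proof as the cycle $(1)\Rightarrow(3)\Rightarrow(2)\Rightarrow(1)$ together with $(1)\Rightarrow(4)\Rightarrow(1)$ (or $(1)\Rightarrow(4)$ and $(4)\Rightarrow(3)$ via the symmetry), so that each arrow is a short paragraph.
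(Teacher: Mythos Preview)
Your proof is correct, and for $(2)\Rightarrow(1)$ it matches the paper exactly: apply Lemma~\ref{lem:basicas-congruencias}(5). However, you are working harder than necessary for $(3)\Rightarrow(1)$ and $(4)\Rightarrow(1)$. The paper's proof is a single line (``Immediate by 5 of Lemma~\ref{lem:basicas-congruencias}''), because conditions (3) and (4) are \emph{also} direct instances of that lemma's hypothesis: in $\mathbf{L}_n$ we have $R[0]=\{0,1\}$, so $0\thr 1$ already gives $R[0]\subseteq 0/\theta$; likewise $R[n]=\{n-1,n\}$, so $n-1\thr n$ gives $R[n]\subseteq n/\theta$. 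No induction or separate bisimulation-pushing argument is needed. Your worry about the quantifier range in (2) is therefore moot: whether or not (2) is meant to cover the boundary points, (3) and (4) are handled by the very same lemma, and the implications $(1)\Rightarrow(2),(3),(4)$ are trivial.
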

\begin{proof}
Immediate by 5 of Lemma \ref{lem:basicas-congruencias}.
\end{proof}
We say that $e\in[0,n]$ is an \emph{extreme} of $\theta$ if $e/\theta$
is one of the endpoints of $\mathbf{L}_{n}/\theta$. We denote the
set of all extremes of $\theta$ by $\ext(\theta)$. Observe that
if $k$ is the step of $\theta$ we have 
\[
\ext(\theta)=0/\theta\cup k/\theta.
\]
Note that $0$ and $n$ are always extremes, and if $r$ is part of
a rest then $r\in\ext\left(\theta\right)$. Also note that the step
of $\theta$ equals its first positive extreme.

Suppose $e\notin\{0,n\}$ is an extreme of $\theta$ that is not part
of a rest. If we take two points, one on each side of $e$, that are
at the same distance $d$ from $e$ they will be $\theta$-related
as long as $d$ is small enough. For instance this is always true
if $d$ is not greater than the step of $\theta$. In fact, it is
evident from looking at the folding associated to $\theta$ that as
we start moving away from $e$, equidistant points will be related
as long as we don't hit a rest on one side that is missing from the
other. So, the symmetry breaks for the first time when we encounter
an asymmetrical rest or we fall off the line on one side and not the
other. See Figure \ref{fig:simetria}.

When there is a rest at $e$ the situation is the same, only that
the symmetry is with respect to $e+\frac{1}{2}$. These facts are
summarized in a precise way below.

\begin{figure}
\begin{center}
\begin{tikzpicture}[line cap=round,line join=round,>=triangle 45,x=0.7cm,y=0.7cm] \draw (0,2)-- (1,4)-- (2,4)-- (4,0)-- (6,4)-- (8,0)-- (10,4)-- (11.5,1);         \fill [color=black] (1,4) circle (1.6pt);         \draw[color=black] (0.45,4.4) node {$e-x$};         \fill [color=black] (2,4) circle (1.6pt);         \draw[color=black] (2.87,4.42) node {$e-x+1$};         \fill [color=black] (6,4) circle (1.6pt);         \draw[color=black] (5.98,4.4) node {$e$};         \fill [color=black] (10,4) circle (1.6pt);         \draw[color=black] (9.97,4.41) node {$e+x-1$};         \fill [color=black] (3.52,1) circle (1.6pt);         \draw[color=black] (2.67,1.07) node {$e-t$};         \fill [color=black] (10.49,3) circle (1.6pt);         \draw[color=black] (11.2,3.09) node {$e+x$};         \fill [color=black] (8.51,1) circle (1.6pt);         \draw[color=black] (9.3,1.01) node {$e+t$};
\end{tikzpicture}
\end{center}

\protect\caption{Moving away from an extreme}
\label{fig:simetria}
\end{figure}
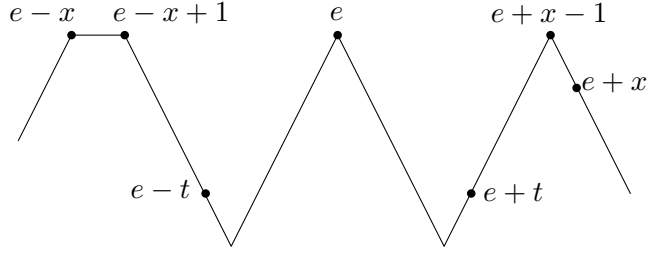

\begin{lem}
\label{lem:equidistantes-a-ext}Let $e\notin\{0,n\}$ be an extreme
of $\theta$.
\begin{enumerate}
\item Suppose $e$ is not part of a rest. Assume 
\[
\left\{ t\in\omega\mid e-t\mbox{ and }e+t\mbox{ are in
  \ensuremath{[0,n]} and are not }\theta\mbox{-related}\right\} \neq\emptyset,
\]
and let $x$ be the least member of this set. Then $e+x-1$ is an
extreme of $\theta$, and either

\begin{enumerate}
\item there is a rest at $e-x$ and $e+x-1$ is not part of a rest or 
\item there is a rest at $e+x-1$ and $e-x$ is not part of a rest.
\end{enumerate}
\item Suppose there is a rest at $e$. Assume 
\[
\left\{ t\in\omega\mid e-t\mbox{ and }e+t+1\mbox{ are in
  \ensuremath{[0,n]} and are not }\theta\mbox{-related}\right\} \neq\emptyset,
\]
and let $x$ be the least member of this set. Then $e+x$ is an extreme
of $\theta$, and either

\begin{enumerate}
\item there is a rest at $e-x$ and $e+x$ is not part of a rest or 
\item there is a rest at $e+x$ and $e-x$ is not part of a rest.
\end{enumerate}
\end{enumerate}
\end{lem}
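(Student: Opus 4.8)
The plan is to work directly with the folding representation of $\theta$ established in Theorem \ref{th:charact-congruences}. By Lemma \ref{lem:toda_con_es_un_par} we may write $\theta=\conr{k}{r}$ where $k$ is the step of $\theta$ and $\bar r=\langle r_1,\ldots,r_m\rangle$ enumerates the rests. Since $e$ is an extreme not in $\{0,n\}$, we have $e/\theta\in\{0/\theta,k/\theta\}$, so $e-\Delta(e)\equiv_{2k}0$ or $e-\Delta(e)\equiv_{2k}k$. The key computational identity is that, for the relation $\conr{k}{r}$, two points $e-t$ and $e+t$ are $\theta$-related precisely when $(e+t)-\Delta(e+t)\equiv_{2k}\pm\bigl((e-t)-\Delta(e-t)\bigr)$. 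Writing $D^+(t)=\Delta(e+t)-\Delta(e)$ and $D^-(t)=\Delta(e)-\Delta(e-t)$ for the number of rests strictly between $e$ and $e\pm t$ (counted with the appropriate sign/convention), and using that $e-\Delta(e)$ is $\equiv_{2k}0$ or $k$, a short calculation shows $e-t\thr e+t$ holds iff $D^+(t)=D^-(t)$, i.e. iff the number of rests strictly between $e$ and $e+t$ equals the number strictly between $e-t$ and $e$. (When there is a rest \emph{at} $e$, one does the same with $e+t+1$ in place of $e+t$; the rest at $e$ is the center of the symmetry, which shifts everything by $\tfrac12$.)

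Granting that identity, the proof is essentially bookkeeping. Let $x$ be the least $t$ for which $e-t,e+t\in[0,n]$ but $e-t\not\thr e+t$. By minimality, for every $t<x$ the counts of rests on the two sides agree; in particular the rest-sets on $[e-x+1,e]$ and on $[e,e+x-1]$ are ``mirror images'' up to position, and $e-x+1\thr e+x-1$. Since $x$ is the first failure, passing from $x-1$ to $x$ the count of rests must change on exactly one side: either there is a rest at $e-x$ and none at $e+x-1$, or there is a rest at $e+x-1$ and none at $e-x$. (The alternative degeneration, that one of $e-x,e+x$ falls outside $[0,n]$, is excluded by the hypothesis that both lie in $[0,n]$, though it is exactly this boundary case that governs the first positive extreme larger than the step and explains the phrasing of the lemma.) In either case, $e+x-1$ is part of a rest, hence is an extreme; and if it is \emph{not} part of a rest then, because $e-x+1\thr e+x-1$ and $e-x+1$ is an extreme (by induction on $t\le x-1$, each equidistant pair is related, so $e-t$ is an extreme iff $e+t$ is), $e+x-1$ is still an extreme. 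This gives the two alternatives (a) and (b), and conclusion 2 follows from 1 by the half-integer shift.

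The main obstacle I expect is \emph{not} the combinatorics of counting rests but rather making the base of the induction and the ``mirror'' claim airtight: one must verify that for $t<x$ the equality $e-t\thr e+t$ really forces the rest positions on the two sides to correspond term by term (not merely in total count), so that the first discrepancy at $t=x$ is genuinely a single new rest on one side. This is where condition (1) of Lemma \ref{lem:carac-pares-que-son-cong} (no two consecutive rests, gap $>2$ between rests) does the work: it prevents two rests from ``canceling'' within a single step and forces the count to increase by at most one when $t$ increases by one, so the first failure is exactly of the form described. I would isolate this as a small sub-claim, prove it by a direct induction using the gap condition, and then the rest of the argument is the straightforward case analysis sketched above.
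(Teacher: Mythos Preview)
The paper gives no formal proof of this lemma: it is stated as evident from the folding picture (the paragraph before the lemma and Figure~\ref{fig:simetria}). Your attempt to formalize that intuition is the right project, and the overall shape---the two sides of $e$ stay in step until a single asymmetric rest breaks the symmetry---is exactly what is needed.

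The gap is your ``short calculation'': the claim that $e-t\thr e+t$ holds iff $D^+(t)=D^-(t)$ is not a bare identity. Writing $E=e-\Delta(e)$ (so $2E\equiv_{2k}0$), the definition of $\conr{k}{r}$ gives $e-t\thr e+t$ iff \emph{either} $2t\equiv_{2k}D^+(t)+D^-(t)$ \emph{or} $D^+(t)\equiv_{2k}D^-(t)$. So equal counts do force the relation via the second branch, but the relation may also hold via the first branch with unequal counts, and the second branch is only a congruence, not an equality. Your last paragraph senses that something must be tightened, but it misplaces the difficulty: the issue is not going from equal counts to mirrored positions, it is that your iff is simply false as stated.

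A cleaner route bypasses the counts. Let $\phi(a)\in[0,k]$ be the height in the folding (the unique $j\le k$ with $a\thr j$). Since $e-t\thr e+t$ for all $t<x$, we have $\phi(e-t)=\phi(e+t)$ there; subtracting consecutive instances shows the increments match, and since $\phi(a+1)-\phi(a)=0$ exactly at rests, the rest pattern mirrors for $t\le x-1$. Now $x\ge 2$ (as $e$ not part of a rest forces $e-1\thr e+1$), and if $h:=\phi(e\pm(x-1))$ lay strictly between $0$ and $k$ then $\phi(e\pm x)=2h-\phi(e\pm(x-2))$ would again agree, contradicting the choice of $x$. Hence $h\in\{0,k\}$, so $e+x-1$ is an extreme; and $\phi(e-x)\ne\phi(e+x)$ then forces exactly one side to stay at height $h$ (a rest) while the other bounces, which is precisely the dichotomy (a)/(b). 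The ``not part of a rest'' clauses follow from the mirror property at $t=x-1$ together with nontriviality of $\theta$ (no two adjacent rests). Part~2 is the same argument centered at $e+\tfrac12$.
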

Next we show that when the join of two congruences is not trivial
they behave exactly the same at common extremes.
\begin{lem}
\label{lem:coinciden-en-etas}Let $\theta$ and $\delta$ be congruences
of $\mathbf{L}_{n}$ such that $\theta\vee\delta$ is not trivial,
and let $e\in\ext(\theta)\cap\ext(\delta)$. Then $e$ is the left
(right) part of a rest of $\theta$ if and only if $e$ is the left
(right) part of a rest of $\delta$.\end{lem}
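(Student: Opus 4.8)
The statement is symmetric in $\theta$ and $\delta$ and in ``left'' versus ``right'', so it suffices to prove one direction: if $e$ is the left part of a rest of $\theta$, then $e$ is the left part of a rest of $\delta$. Throughout, write $\rho \doteq \theta \vee \delta$; by hypothesis $\rho$ is a nontrivial congruence, and since $\rho \supseteq \theta, \delta$, its step $l$ divides the steps of both $\theta$ and $\delta$ (because $\mathbf{L}_n/\rho$ is a quotient of $\mathbf{L}_n/\theta$, and a quotient of a line is a line whose step divides the original step). Also $\ext(\theta) \subseteq \ext(\rho)$ and likewise for $\delta$, so $e \in \ext(\rho)$. The idea is to play off the rest of $\theta$ at $e$ against the behaviour of $\delta$ near $e$, using Lemma~\ref{lem:equidistantes-a-ext} and the fact that a rest forces the local picture.

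First I would handle the boundary cases $e \in \{0, n\}$: these are excluded as left (resp.\ right) parts of rests by the structural description in Theorem~\ref{th:charact-congruences} (rests starting at $0$ or ending at $n$ are not allowed), so we may assume $0 < e < n$. Next, suppose for contradiction that $e$ is \emph{not} the left part of a rest of $\delta$. Since $e \in \ext(\delta)$, there are two sub-cases: either $e$ is not part of any rest of $\delta$, or $e$ is the \emph{right} part of a rest of $\delta$ (i.e.\ $e-1 \mathrel{\delta} e$). In the first sub-case I would apply part~(1) of Lemma~\ref{lem:equidistantes-a-ext} to $\delta$ and part~(2) to $\theta$ (which has a rest at $e$), comparing the two ``breaking radii'' $x_\theta$ and $x_\delta$ at which symmetry about $e$ (resp.\ $e + \tfrac12$) first fails. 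The key point is that up to the first break, $\delta$ relates $e - t$ with $e + t$ and $\theta$ relates $e-t$ with $e+t+1$; combined with $e - 1 \mathrel{\rho} e$ (forced by $\theta \subseteq \rho$) one derives enough $\rho$-relations near $e$ to contradict nontriviality of $\rho$ via Lemma~\ref{lem:carac-trivial} — essentially one produces a point $x$ with $x-1 \mathrel{\rho} x \mathrel{\rho} x+1$. In the second sub-case, $e-1 \mathrel{\delta} e$ and $e \mathrel{\theta} e+1$ both hold, so $e-1 \mathrel{\rho} e \mathrel{\rho} e+1$ directly, and Lemma~\ref{lem:carac-trivial} makes $\rho$ trivial — contradiction. (Symmetrically, if $e$ is the left part of a rest of $\delta$ but not of $\theta$, swap the roles; and the ``right part'' statement is the image of the ``left part'' statement under the reflection $x \mapsto n - x$, which is an automorphism of $\mathbf{L}_n$ carrying congruences to congruences.)

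The main obstacle will be the first sub-case, where neither congruence hands us an immediate triple $e-1, e, e+1$ all $\rho$-related; there one must carefully track how far the symmetric relatedness propagates on each side of $e$ and argue that the mismatch in the rest patterns of $\theta$ and $\delta$ (one rest at $e$, none at $e$) must surface as an asymmetry that, once quotiented by $\rho$, collapses a whole interval into a single $\rho$-block — the hypothesis $R[x] \subseteq x/\rho$ of Lemma~\ref{lem:basicas-congruencias}(5). In carrying this out I expect to use repeatedly that a rest at $e$ in $\theta$ means the folding of $\theta$ ``turns around'' at $e$, so that $e - t \mathrel{\theta} e + t + 1$ for all small $t$, and to compare this with the corresponding statement for $\delta$, whose folding turns around (at worst) at $e - \tfrac12$ or not at $e$ at all; the arithmetical characterization via $\Delta_{\bar r}$ from Lemma~\ref{lem:toda_con_es_un_par} can be invoked to make the bookkeeping rigorous if the folding picture is deemed insufficiently precise.
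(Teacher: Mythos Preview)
Your second sub-case and the boundary handling are fine and match the paper. The problem is the first sub-case: you are making it far harder than it is. If $e\in\ext(\delta)$ with $0<e<n$ and $e$ is not part of any rest of $\delta$, then \emph{immediately} $e-1\mathrel{\delta}e+1$. Indeed, $v(e/\delta)\leq 2$ since $e/\delta$ is an endpoint of the quotient line, and $R[e]/\delta=\{e-1/\delta,\,e/\delta,\,e+1/\delta\}$; since $e$ is not part of a rest, $e/\delta$ differs from both $e-1/\delta$ and $e+1/\delta$, so the latter two must coincide. Combining $e-1\mathrel{\delta}e+1$ with $e\mathrel{\theta}e+1$ gives three consecutive points related by $\rho=\theta\vee\delta$, and Lemma~\ref{lem:carac-trivial} finishes at once. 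There is no need for Lemma~\ref{lem:equidistantes-a-ext}, no ``breaking radii'', and no appeal to the arithmetic of $\Delta_{\bar r}$; your entire last paragraph about the ``main obstacle'' is addressing a difficulty that does not exist.

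Two smaller issues. First, your claim that the step of $\rho$ \emph{divides} the steps of $\theta$ and $\delta$ is false in general (quotients of lines can have rests, so the relation between steps is not plain divisibility) and in any case plays no role here. Second, you write that ``$e-1\mathrel{\rho}e$ is forced by $\theta\subseteq\rho$'', but what $\theta$ gives you is $e\mathrel{\rho}e+1$, not $e-1\mathrel{\rho}e$; the missing link $e-1\mathrel{\rho}e+1$ comes from $\delta$ as above.
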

\begin{proof}
Suppose $e$ is the left part of a rest of $\theta$ and is not part
of a rest of $\delta$. Then $e\thr e+1$ and $e-1\delr e+1$, so
\[
e-1\mathrel{\left(\theta\vee\delta\right)}e\mathrel{\left(\theta\vee\delta\right)}e+1.
\]
 Thus $\theta\vee\delta$ is trivial by Lemma \ref{lem:carac-trivial}.
The remaining cases are just as easy.
\end{proof}
When applied to a pair of congruences with nontrivial join, the two
lemmas above show that each of these congruences is determined by
its behavior between any two consecutive common extremes that do not
constitute a rest.
\begin{lem}
\label{lem:periodicas}Assume $\theta\vee\delta$ is not trivial,
and let $e\in\ext(\theta)\cap\ext(\delta)$ such that $e\notin\{0,n\}$.
\begin{enumerate}
\item Suppose $e$ is not part of a rest of $\theta$, and let $d$ be the
distance from $e$ to the next greater element of $\ext(\theta)\cap\ext(\delta)$.
Then $e-d$ is the greatest element of $\ext(\theta)\cap\ext(\delta)$
before $e$ and 
\begin{align*}
e-t & \thr e+t\\
e-t & \delr e+t
\end{align*}
for $t=0,\ldots,d$. 
\item Suppose $\theta$ has a rest at $e$, and let $d$ be the distance
from $e+1$ to the next greater element of $\ext(\theta)\cap\ext(\delta)$.
Then $e-d$ is the greatest element of $\ext(\theta)\cap\ext(\delta)$
before $e$ and 
\begin{align*}
e-t & \thr e+t+1\\
e-t & \delr e+t+1
\end{align*}
for $t=0,\ldots,d$. 
\end{enumerate}
\end{lem}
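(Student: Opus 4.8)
The plan is to treat the two items in parallel, using the ``moving away from an extreme'' analysis of Lemma~\ref{lem:equidistantes-a-ext} together with the coincidence result of Lemma~\ref{lem:coinciden-en-etas}. Fix $e\in\ext(\theta)\cap\ext(\delta)$ with $e\notin\{0,n\}$; I will do item~1, and item~2 is entirely analogous with $e$ shifted to $e+\tfrac12$. Since $\theta\vee\delta$ is nontrivial, $\theta$ is nontrivial; because $e$ is an extreme of $\theta$ not equal to $0$ or $n$, and (in item~1) not part of a rest of $\theta$, moving away from $e$ produces a first break of symmetry. By Lemma~\ref{lem:coinciden-en-etas}, $e$ is also not part of a rest of $\delta$, so the same is true for $\delta$. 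The key observation is that this first break cannot happen ``privately'' for one of the congruences: if, say, $\theta$ broke symmetry at some $t_0$ while $\delta$ still had $e-t_0\delr e+t_0$, then by Lemma~\ref{lem:equidistantes-a-ext}(1) applied to $\theta$ there would be an \emph{asymmetrical} rest --- a rest at exactly one of $e-t_0$, $e+t_0-1$ --- and $e+t_0-1$ would be an extreme of $\theta$. I will show this forces $\theta\vee\delta$ to be trivial, contradicting the hypothesis.

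The mechanism for the contradiction is the same as in Lemma~\ref{lem:coinciden-en-etas}: an asymmetrical rest, glued to a symmetry of the other congruence, collapses everything. Concretely, suppose there is a rest of $\theta$ at $e-t_0$ but $e+t_0-1$ is not part of a rest of $\theta$ (the other subcase of Lemma~\ref{lem:equidistantes-a-ext}(1) is symmetric). Then $e-t_0\thr e-t_0+1$. Since $\delta$ has not yet broken symmetry around $e$ at distances up to $t_0$, we have $e-t_0\delr e+t_0$ and $e-t_0+1\delr e+t_0-1$; combined with $e-t_0\thr e-t_0+1$ this yields, after transporting along $\theta\vee\delta$, a configuration of the form $u-1\mathrel{(\theta\vee\delta)}u\mathrel{(\theta\vee\delta)}u+1$ (one chases: $e+t_0-1\mathrel{(\theta\vee\delta)}e-t_0+1\thr e-t_0\mathrel{(\theta\vee\delta)}e+t_0$, and $e+t_0-1\rr e+t_0$, so $e+t_0-1$ and $e+t_0$ both lie below $e+t_0-1$, etc.), and Lemma~\ref{lem:carac-trivial} then makes $\theta\vee\delta$ trivial. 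Hence no private break occurs: the least $t$ at which symmetry breaks is the same for $\theta$ and for $\delta$. Call it $d'$.

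It remains to identify $d'$ with $d$, the distance from $e$ to the next element of $\ext(\theta)\cap\ext(\delta)$ above $e$, and to pin down that $e-d$ is the nearest common extreme below $e$. For the upper side: by Lemma~\ref{lem:equidistantes-a-ext}(1), $e+d'-1$ is an extreme of \emph{both} $\theta$ and $\delta$ (the lemma gives it as an extreme of each, using the common value $d'$), so $e+d'-1\in\ext(\theta)\cap\ext(\delta)$, whence $d'-1\ge d$; conversely every element of $\ext(\theta)\cap\ext(\delta)$ strictly between $e$ and $e+d'$ would, by Lemma~\ref{lem:periodicas} hypotheses being inapplicable there --- actually more directly --- contradict minimality of $d'$ unless it equals $e+d'-1$... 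I will instead argue: for $t<d'$ we have $e-t\thr e+t$ and $e-t\delr e+t$ by choice of $d'$, and this already gives the displayed equivalences for $t=0,\dots,d'-1$; to get $t=d'$ as well, note that at $t=d'$ symmetry breaks, meaning $e-d'$ and $e+d'$ are \emph{not} $\theta$-related (and not $\delta$-related), but the relevant endpoint of the broken symmetry, $e+d'-1$, is the common extreme, so in fact $e+d'-1$ is the ``next greater element'' and $d=d'-1$... Here I must be careful: the statement asserts $e-t\thr e+t$ for $t=0,\dots,d$, i.e.\ up to and including the next common extreme $e+d$. Since $e+d$ is itself a common extreme not exceeding $e+d'-1=e+d$, we get $d=d'-1$, and the range $t=0,\dots,d=d'-1$ is exactly where symmetry holds, as noted. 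The lower-side claim, that $e-d$ is the greatest common extreme below $e$, follows by running the identical argument in the other direction from $e$ (symmetry of the whole setup under reflection), or by invoking the periodicity that the folding picture makes evident: between consecutive common extremes both congruences are mirror-symmetric about the midpoint, so the pattern on $[e-d,e]$ mirrors that on $[e,e+d]$. The main obstacle is the bookkeeping in this last paragraph --- matching the off-by-one between ``first $t$ where symmetry breaks'' ($d'$) and ``distance to next common extreme'' ($d=d'-1$) --- and making rigorous the claim that the asymmetrical rest is necessarily \emph{on the line} (so that Lemma~\ref{lem:carac-trivial} applies rather than the ``fall off the line'' alternative); but since $e-d'$ and $e+d'-1$ are both interior extremes by construction, falling off the line is excluded, and the rest case is the only one left to handle.
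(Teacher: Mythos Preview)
Your approach is the same as the paper's: use Lemma~\ref{lem:equidistantes-a-ext} to locate the first break of the mirror symmetry around $e$, then show via Lemma~\ref{lem:carac-trivial} that a break occurring for one congruence and not the other would make $\theta\vee\delta$ trivial, so the break is common and sits at a common extreme. However, your execution has a real gap and an avoidable structural tangle.

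\textbf{The gap.} In your contradiction argument you assume a rest of $\theta$ at $e-t_0$ with $e+t_0-1$ not part of a rest of $\theta$, while $e-t_0\delr e+t_0$. From this you correctly chase
\[
e+t_0-1 \;\delr\; e-t_0+1 \;\thr\; e-t_0 \;\delr\; e+t_0,
\]
which gives $e+t_0-1 \mathrel{(\theta\vee\delta)} e+t_0$. But that is only \emph{two} consecutive elements in one block; Lemma~\ref{lem:carac-trivial} needs three. Your ``etc.'' is precisely where the missing step lives. The paper supplies it: since $e+t_0-1$ is an extreme of $\theta$ that is \emph{not} part of a rest (this is the content of case~(a) in Lemma~\ref{lem:equidistantes-a-ext}(1)), one has $e+t_0-2 \thr e+t_0$. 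Together with the chain above this puts $e+t_0-2$, $e+t_0-1$, $e+t_0$ all in one $(\theta\vee\delta)$-block, and now Lemma~\ref{lem:carac-trivial} fires. You never invoke the fact that $e+t_0-1$ is a non-rest extreme of $\theta$, and without it the argument does not close.

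\textbf{The framing.} You introduce $d'$ as the first break and then try to match it to $d$, which leads to the visible off-by-one struggle and the unproved claim that no common extreme lies strictly between $e$ and $e+d'-1$. The paper avoids this entirely by arguing the other way round: take $d$ as given (distance to the next common extreme above $e$), suppose for contradiction that $(\theta\cap\delta)$-symmetry fails at some minimal $x\le d$, and show this forces $e+x-1$ to be a common extreme with $e<e+x-1<e+d$ (one checks $x>1$ because $e-1\mathrel{(\theta\cap\delta)}e+1$), contradicting the definition of $d$. This framing makes the ``no earlier common extreme'' issue disappear and handles the boundary (``fall off the line'') cleanly: $e+x\le e+d\le n$ automatically, and $e-x\ge 0$ because otherwise $0=e-e\mathrel{(\theta\cap\delta)}2e$ would make $2e$ a common extreme in $(e,e+d)$. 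The claim about $e-d$ being the nearest common extreme below $e$ then falls out: $e-d$ is related to the common extreme $e+d$ by both congruences, hence is itself a common extreme, and any $e-c$ with $0<c<d$ a common extreme would make $e+c$ one too.
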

\begin{proof}
1. For the sake of contradiction suppose there is $t\in[1,d]$ such
that $e-t$ and $e+t$ are not $\left(\theta\cap\delta\right)$-related.
Let $x$ be the smallest such $t$. Note that $e\geq x$, because
otherwise $0=e-e\mathrel{\left(\theta\cap\delta\right)}e+e$, and
as $0\in\ext(\theta)\cap\ext(\delta)$ it would follow that $2e$
is a common extreme satisfying $e<2e<e+d$.

So $e-x\geq0$ and $\left\langle e-x,e+x\right\rangle $ is either
not in $\theta$ or not $\delta$. We may suppose without loss that
$\left\langle e-x,e+x\right\rangle $ is not in $\theta$ (the other
case is symmetrical since by Lemma \ref{lem:coinciden-en-etas} we
have that $e$ is not part of a rest of $\delta$). Now by 1 of Lemma
\ref{lem:equidistantes-a-ext} we know that $e+x-1$ is an extreme
of $\theta$ and either (a) or (b) of 1 in that lemma hold. Assume
(a) holds, i.e., there is a rest of $\theta$ at $e-x$ and $e+x-1$
is not part of a rest of $\theta$. This says in particular that 
\[
e+x-2\thr e+x.
\]
We argue that $\left\langle e-x,e+x\right\rangle $ is not in $\delta$.
Suppose this is not the case, then 
\[
e+x\delr e-x\thr e-x+1\thr e+x-1,
\]
which in combination with the previous display implies that $e+x-2$,
$e+x-1$ and $e+x$ are all $\left(\theta\vee\delta\right)$-related,
making $\theta\vee\delta$ trivial by Lemma \ref{lem:carac-trivial}.
Thus $e-x$ and $e+x$ cannot be $\delta$-related. So, applying 1
of Lemma \ref{lem:equidistantes-a-ext} to $\delta$ it follows in
particular that $e+x-1$ is an extreme of $\delta$, and hence 
\[
e+x-1\in\ext(\theta)\cap\ext(\delta).
\]
 Note that as $\theta$ and $\delta$ are not trivial and $e$ is
not part of a rest of either congruence we have $e-1\mathrel{\left(\theta\cap\delta\right)}e+1$.
Thus $x>1$ and so 
\[
e<e+x-1<e+d.
\]
But then $e+x-1$ cannot be a common extreme by our choice of $d$.
A contradiction.

A similar argument works in the case that point (b) of 1 in Lemma
\ref{lem:equidistantes-a-ext} holds.

\noindent 2. This proof is the same as for 1 adjusting the formulas
for the rest at $e$ and applying 2 of Lemma \ref{lem:equidistantes-a-ext}
instead of 1.
\end{proof}
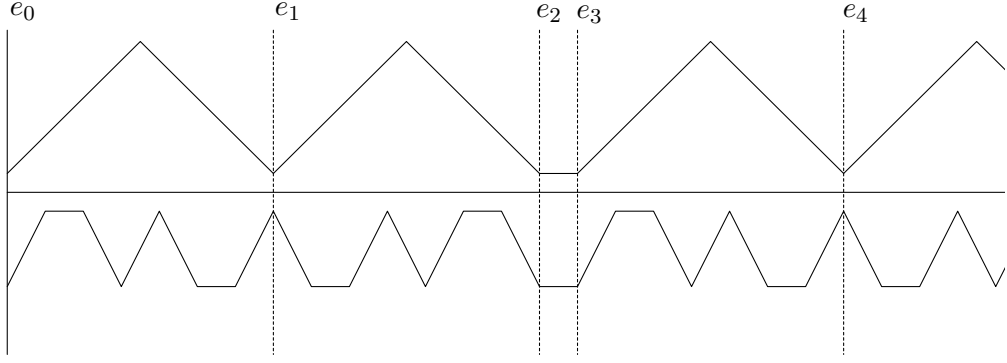
\begin{figure}
\begin{center}\begin{tikzpicture}[line cap=round,line join=round,>=triangle     45,x=0.25cm,y=0.25cm]       \clip(1.63,-4.23) rectangle   (54.47,15.47);     \draw [domain=2.0:59.14318456515393] plot(\x,{(--290-0*\x)/58}); \draw [dash pattern=on 1pt off 1pt] (16,13.6)-- (16,-3.6); \draw [dash pattern=on 1pt off 1pt] (30,13.6)-- (30,-3.6); \draw (29.28,15.49) node[anchor=north west] {$e_2$}; \draw (31.41,15.44) node[anchor=north west] {$e_3$}; \draw (45.41,15.49) node[anchor=north west] {$e_4$}; \draw (1.6,15.6) node[anchor=north west] {$e_0$}; \draw (2,-3.6)-- (2,13.6); \draw (2,6)-- (9,13)-- (16,6)-- (23,13)-- (30,6)-- (32,6)-- (39,13)-- (46,6)-- (53,13)-- (60,6); \draw (2,0)-- (4,4)-- (6,4)-- (8,0)-- (10,4)-- (12,0)-- (14,0)-- (16,4)-- (16,4)-- (18,0)-- (20,0)-- (22,4)-- (24,0)-- (26,4)-- (28,4)-- (30,0)-- (32,0)-- (34,4)-- (36,4)-- (38,0)-- (40,4)-- (42,0)-- (44,0)-- (46,4)-- (48,0)-- (50,0)-- (52,4)-- (54,0)-- (56,4)-- (58,4); \draw [dash pattern=on 1pt off 1pt] (32,13.6)-- (32,-3.6); \draw [dash pattern=on 1pt off 1pt] (46,13.6)-- (46,-3.6); \draw (15.54,15.6) node[anchor=north west] {$e_1$}; \end{tikzpicture}
\end{center}\protect\caption{Periodicity}
\label{fig:periodicas}
\end{figure}
Let us take closer look at what have just proved.
\begin{rem}
\label{rem:periodicidad}Take an enumeration $0=e_{0}<e_{1}<\cdots<e_{N}=n$
of $\ext(\theta)\cap\ext(\delta)$. If we draw the folding that represents
$\theta$, Lemma \ref{lem:periodicas} says that the part from $e_{1}$
to $e_{2}$ is the reflection of the part from $0$ to $e_{1}$ (assuming
there is no rest at $e_{1}$). Now the part from $e_{2}$ to $e_{3}$
is a reflection of the previous part and so on. Thus $\theta$ (and
$\delta$ too of course) is determined (up to rests at common extremes)
by its restriction to $\left[0,e_{1}\right]$. Figure \ref{fig:periodicas}
illustrates this fact.
\end{rem}

\subsection{The meet of two congruences}

Lemma \ref{lem:periodicas} allows us to characterize the meet of
two congruences with nontrivial join. But first we need to point out
a few easy facts.
\begin{lem}
\label{lem:aux-meet}Let $\theta$, $\delta$ and $\rho$ be congruences
of $\mathbf{L}_{n}$.
\begin{enumerate}
\item If $\rho\subseteq\theta$, then $\ext(\rho)\subseteq\ext(\theta)$.
\item If $\rho\varsubsetneq\theta$, then step of $\rho$ is strictly greater than the step of $\theta$.
\item If $\rho\subseteq\theta\cap\delta\neq L\times L$, then the step
  of $\rho$ is greater than or equal to 
  \[e_{1}\doteq\min\left\{ e\in\ext(\theta)\cap\ext(\delta)\mid
  e>0\right\}.
  \]
\item If $n=\min\left\{ e\in\ext(\theta)\cap\ext(\delta)\mid e>0\right\} $
and $\theta\cap\delta\neq L\times L$ then $\theta\wedge\delta=\mathrm{Id}$.
\end{enumerate}
\end{lem}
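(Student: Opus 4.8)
\textbf{Proof proposal for Lemma~\ref{lem:aux-meet}(4).}

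The plan is to show that under the hypothesis $n = \min\{e \in \ext(\theta)\cap\ext(\delta) \mid e > 0\}$ the only congruence below $\theta\wedge\delta$ is the identity, by analysing the step of an arbitrary $\rho \subseteq \theta\cap\delta$. Note first that the assumption $\theta\cap\delta \neq L\times L$ forces $\theta\vee\delta$ to be nontrivial as well: indeed, if $\theta\vee\delta$ were the full relation, then by Lemma~\ref{lem:carac-trivial} we would have $0 \mathrel{(\theta\vee\delta)} 1$; but $0$ and $n$ are common extremes and the only ones strictly between are absent, so by Remark~\ref{rem:periodicidad} both $\theta$ and $\delta$ are determined on all of $[0,n]$ by their restriction to $[0,n]$ itself, which is vacuous as a constraint — here I would instead argue more directly. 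Actually the cleanest route is: the hypothesis $\theta\cap\delta\neq L\times L$ is exactly what is needed so that part~(3) applies; and part~(3) does not itself require $\theta\vee\delta$ nontrivial (it is a statement about $\theta\cap\delta$), so I may invoke it unconditionally.

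So let $\rho \subseteq \theta\wedge\delta = \theta\cap\delta$ be any congruence; I want $\rho = \mathrm{Id}$. If $\rho$ were the full relation $L\times L$, then $\theta\cap\delta = L\times L$, contradicting the hypothesis; so $\rho$ is not trivial in that sense, and in particular $\rho$ has a well-defined step $k_\rho$ with $1 \le k_\rho \le n$. By part~(3), $k_\rho \ge e_1 = \min\{e\in\ext(\theta)\cap\ext(\delta)\mid e>0\} = n$. Hence $k_\rho \ge n$, and since the step of any congruence of $\mathbf{L}_n$ is at most $n$ (the quotient line $\mathbf{L}_n/\rho$ has at most $n+1$ points, so its step $|L_n/\rho|-1 \le n$), we get $k_\rho = n$. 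But a congruence of step $n$ has quotient isomorphic to $\mathbf{L}_n$ itself, i.e.\ all $n+1$ classes are singletons, so $\rho = \mathrm{Id}$. Since $\rho$ was arbitrary below $\theta\wedge\delta$, this shows $\theta\wedge\delta = \mathrm{Id}$.

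I expect the only delicate point to be the correct invocation of part~(3): one must be sure its hypothesis ``$\rho \subseteq \theta\cap\delta \neq L\times L$'' is met, which it is since $\rho \subseteq \theta\wedge\delta$ and $\theta\wedge\delta = \theta\cap\delta \neq L\times L$ by assumption, and one must handle at the outset the degenerate possibility that $\rho$ is itself the full relation (ruled out the same way). Everything else — that step is bounded above by $n$, and that step $= n$ forces the identity — is immediate from the definition of step and Lemma~\ref{lem:basicas-con-Ln}(3). No trajectory machinery or case analysis on rests is needed; the lemma is essentially a one-line consequence of part~(3) once the bookkeeping on the step is in place.
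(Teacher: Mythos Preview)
Your argument is correct and matches the paper's one-line proof: apply part~(3) to any lower bound $\rho$ of $\{\theta,\delta\}$ to get $\text{step}(\rho)\ge n$, hence $\rho=\mathrm{Id}$. One caveat: you twice assert $\theta\wedge\delta=\theta\cap\delta$, but this is false in general for bisimulation equivalences --- e.g.\ on $\mathbf{L}_6$ with $\theta=\langle 2\rangle$ and $\delta=\langle 3\rangle$, the intersection contains the pair $(1,5)$ yet $\theta\wedge\delta=\mathrm{Id}$. Fortunately your proof only needs the inclusion $\theta\wedge\delta\subseteq\theta\cap\delta$, which is automatic, so the error is cosmetic; just drop the equality and the opening detour about $\theta\vee\delta$.
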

\begin{proof}
1. Let $e\in\ext(\rho)$. If $e\in\left\{ 0,n\right\} $ we are done,
so assume $0<e<n$. If $e$ is part of a rest of $\rho$, then $r\mathrel{\rho}r+1$
or $r\mathrel{\rho}r-1$. So $r$ must also be part of a rest of $\theta$
and thus $r\in\ext(\theta)$. If $e$ is not part of a rest of $\rho$
then $e-1\mathrel{\rho}e+1$, so $e-1\mathrel{\theta}e+1$ and $e$
must be an extreme of $\theta$.

\noindent 2. Recall that the step of a congruence is the size of its
quotient minus one.

\noindent 3. Let $q$ be the step of $\rho$. Since $q$ is an extreme
of $\rho$, by 1 we know that $q\in\ext(\theta)\cap\ext(\delta)$,
and as $\rho\neq L\times L$ we have $q>0$.

\noindent 4. By 3 any lower bound must have step at least $n$, and
thus the only lower bound is $\mathrm{Id}$.\end{proof}
\begin{thm}
\label{thm:meet}Let $\theta={\conr{k}{r}}$ and $\delta={\conr{l}{s}}$
be such that their join is not trivial. Then,
\[
\theta\wedge\delta=\left\langle e_{1};\bar{r}\cap\bar{s}\right\rangle ,
\]
where $e_{1}$ is the first positive element in $\ext(\theta)\cap\ext(\delta)$.\end{thm}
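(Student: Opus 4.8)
The plan is to verify the two inclusions separately, using the periodicity established in Lemma~\ref{lem:periodicas} (Remark~\ref{rem:periodicidad}) to reduce everything to the behavior of $\theta$ and $\delta$ on the initial block $[0,e_1]$, and to identify $\conr{e_1}{r\cap s}$ as the congruence whose step is $e_1$ and whose rests are exactly the common rests of $\theta$ and $\delta$.

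First I would check that $\rho\doteq\langle e_1;\overline{r\cap s}\rangle$ is indeed a congruence of $\mathbf{L}_n$ by verifying the four conditions of Theorem~\ref{th:charact-congruences} for the parameters $k'=e_1$ and $\bar{t}=\bar r\cap\bar s$. The key observations are: (i) $e_1\le n/2$ because, if $e_1>n/2$, then $e_1$ would already force the step of both $\theta$ and $\delta$ to be $e_1$ with no room for a further common extreme before $n$, contradicting the nontriviality of $\theta\vee\delta$ via Lemma~\ref{lem:coinciden-en-etas} and the existence of $n\in\ext(\theta)\cap\ext(\delta)$; (ii) every $t\in\bar r\cap\bar s$ lies in $[e_1,n-e_1]$ and is part of a rest of $\rho$, hence an extreme, and by the periodicity of Remark~\ref{rem:periodicidad} the common extremes of $\theta$ and $\delta$ are exactly the points $\equiv 0$ or $\equiv e_1 \pmod{2e_1}$ after correcting by $\Delta_{\overline{r\cap s}}$, which makes the divisibility conditions on $t-i+1$ and on $n-|\bar t|$ fall out of the corresponding conditions for $\theta$.

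Next I would show $\rho\subseteq\theta\cap\delta$. By symmetry it suffices to show $\rho\subseteq\theta$, and by Lemma~\ref{lem:aux-meet}(1-2) it is enough to observe that $\ext(\rho)\subseteq\ext(\theta)$ and that $\rho$ and $\theta$ have the same rests at common extremes (Lemma~\ref{lem:coinciden-en-etas}). Concretely: a pair $\langle x,y\rangle\in\rho$ can be transported, via the reflections described in Remark~\ref{rem:periodicidad}, into a pair of equidistant points about some common extreme $e_i\in[0,e_1+\cdots]$ lying inside a single period $[e_i,e_{i+1}]$; Lemma~\ref{lem:periodicas} then gives directly that this reflected pair is in $\theta$, and pulling back through the same reflections (which are symmetries of $\theta$ by the same lemma) yields $\langle x,y\rangle\in\theta$.

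Finally, for the reverse inclusion $\theta\wedge\delta\subseteq\rho$, let $\mu$ be any common lower bound of $\theta$ and $\delta$; I must show $\mu\subseteq\rho$. By Lemma~\ref{lem:aux-meet}(3) the step $q$ of $\mu$ satisfies $q\ge e_1$, and since $q$ is an extreme of $\mu$ it lies in $\ext(\theta)\cap\ext(\delta)$; moreover $\ext(\mu)\subseteq\ext(\theta)\cap\ext(\delta)$ by Lemma~\ref{lem:aux-meet}(1). Now any rest of $\mu$ is a rest of both $\theta$ and $\delta$, hence an element of $\bar r\cap\bar s$. The remaining point — and I expect this to be the main obstacle — is to rule out $q>e_1$ while still having $\mu\subseteq\theta\cap\delta$: one must argue that a common lower bound cannot have step strictly larger than $e_1$. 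This is where I would use Lemma~\ref{lem:periodicas} once more, noting that between $0$ and $e_1$ neither $\theta$ nor $\delta$ has a common extreme with the other, so on $[0,e_1]$ the relation $\theta\cap\delta$ already identifies $0$ with $e_1$ up to the folding — more precisely, that the behavior of $\theta$ and $\delta$ on $[0,e_1]$ forces $0\mathrel{(\theta\cap\delta)}e_1$ whenever $q=e_1$, pinning the step of the meet to exactly $e_1$; combined with the rests being exactly $\bar r\cap\bar s$ and the fact that a congruence is determined by its step and rests (Corollary after Lemma~\ref{lem:toda_con_es_un_par}), this gives $\theta\wedge\delta=\rho$.
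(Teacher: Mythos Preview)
Your overall architecture matches the paper's: first verify that $\gamma\doteq\langle e_1;\bar r\cap\bar s\rangle$ is a congruence, then show $\gamma\subseteq\theta\cap\delta$ via the periodicity of Lemma~\ref{lem:periodicas}, and finally argue that $\gamma$ is the greatest lower bound. The middle step is essentially the paper's inductive argument in different language and is fine as a sketch.

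The final step, however, contains a genuine confusion. You write that ``the main obstacle is to rule out $q>e_1$ while still having $\mu\subseteq\theta\cap\delta$.'' This is not what is needed, and in fact it is false: the identity $\mathrm{Id}_L$ is a common lower bound with step $n>e_1$. What must be shown is that no common lower bound \emph{strictly contains} $\gamma$. Once you have established that $\gamma$ is a lower bound with step $e_1$, this is immediate from Lemma~\ref{lem:aux-meet}(2)--(3): any $\mu\supsetneq\gamma$ would have step strictly smaller than $e_1$, contradicting $\mu\subseteq\theta\cap\delta$. Hence $\gamma$ is a maximal lower bound, and since $\Con\mathbf{L}_n$ is a lattice, $\gamma=\theta\wedge\delta$. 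Your attempted workaround (``forcing $0\mathrel{(\theta\cap\delta)}e_1$ whenever $q=e_1$'') is circular and unnecessary.

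A smaller issue: your argument that $\gamma$ is a congruence is too loose. The claim ``$e_1\le n/2$ because otherwise $e_1$ would force the step of both $\theta$ and $\delta$ to be $e_1$'' does not follow. The paper instead proves by induction (using Lemma~\ref{lem:periodicas}) that consecutive common extremes not forming a common rest are exactly $e_1$ apart; from this the divisibility conditions of Theorem~\ref{th:charact-congruences} for the pair $(e_1,\bar r\cap\bar s)$ are direct. You should replace your sketch with that computation.
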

\begin{proof}
Let $0=e_{0}<e_{1}<\cdots<e_{N}=n$ be the common extremes of $\theta$
and $\delta$. Our first step is to prove that:\medskip{}

\noindent (i) if $e_{j}\notin\bar{r}\cap\bar{s}$ then, $e_{j+1}-e_{j}=e_{1}$,
for $j=0,\ldots,N-1$.\medskip{}

\noindent We prove it by induction in $j$. When $j=0$ it is certainly
true. Suppose $j>0;$ there are two cases. Assume first that $e_{j-1}\notin\bar{r}\cap\bar{s}$.
Then $e_{j}$ is not part of a rest of $\theta$, so by 1 of Lemma
\ref{lem:periodicas} we have 
\[
e_{j+1}-e_{j}=e_{j}-e_{j-1},
\]
and $e_{j}-e_{j-1}=e_{1}$ by inductive hypothesis. Suppose next that
$e_{j-1}\in\bar{r}\cap\bar{s}$. Then, by 2 of Lemma \ref{lem:periodicas}
it follows that 
\[
e_{j+1}-e_{j}=e_{j-1}-e_{j-2},
\]
and the right-hand side equals $e_{1}$ by inductive hypothesis.

An easy consequence of (i) is:\medskip{}

\noindent (ii) $e_{1}$ divides $e_{j}-\Delta_{\bar{r}\cap\bar{s}}\left(e_{j}\right)$,
for $j=0,\ldots,N$.\medskip{}

Let us write $\gamma$ for $\left\langle e_{1};\bar{r}\cap\bar{s}\right\rangle $.
Now we check that:\medskip{}

\noindent (iii) $\gamma$ is a congruence of $\mathbf{L}_{n}$.\medskip{}

\noindent In fact, the first two conditions in Theorem \ref{th:charact-congruences}
obviously hold, and the last two follow from (ii).

\noindent Our next step is to prove that $\gamma$ is a lower bound
of $\theta$ and $\delta$.\medskip{}

\noindent (iv) Let $x\in\left[0,n\right]$ and $y\in\left[0,e_{1}\right]$.
If $x\mathrel{\gamma}y$, then $x\mathrel{\left(\theta\cap\delta\right)}y$.\medskip{}

\noindent We proceed by induction in $x$. If $x\leq e_{1}$ it clearly
holds as in this case $x=y$. Suppose $x>e_{1}$, and let $e_{j},e_{j+1}$
be consecutive common extremes such that $e_{j}<x\leq e_{j+1}$. We
have three cases to consider. Assume first that neither $e_{j-1}$
nor $e_{j}$ are in $\bar{r}\cap\bar{s}$. Then it is easy to see
from the definition of $\gamma$ that 
\[
e_{j}-t\mathrel{\gamma}e_{j}+t\mbox{, for }t=0,\ldots,e_{j+1}-e_{j}.
\]
Also, by 1 of Lemma \ref{lem:periodicas}, the same holds for $\theta\cap\delta$,
that is 
\[
e_{j}-t\mathrel{\left(\theta\cap\delta\right)}e_{j}+t\mbox{, for }t=0,\ldots,e_{j+1}-e_{j}.
\]
In particular for $t=x-e_{j}$ we have 
\[
2e_{j}-x\mathrel{\left(\gamma\cap\theta\cap\delta\right)}x,
\]
and so by inductive hypothesis we know that $2e_{j}-x\mathrel{\left(\theta\cap\delta\right)}y$.
Thus $x\mathrel{\left(\theta\cap\delta\right)}y$.

In the case that $e_{j-1}\in\bar{r}\cap\bar{s}$ and $e_{j}\notin\bar{r}\cap\bar{s}$
the proof above works with minor adjustments. Finally, suppose $e_{j}\in\bar{r}\cap\bar{s}$.
Then $x=e_{j+1}$, and 
\[
e_{j}\mathrel{\left(\gamma\cap\theta\cap\delta\right)}e_{j+1}.
\]
Our inductive hypothesis yields $e_{j}\mathrel{\left(\theta\cap\delta\right)}y$,
and hence $x\mathrel{\left(\theta\cap\delta\right)}y$.

Observe that $0/\gamma,\ldots,e_{1}/\gamma$ are all the $\gamma$-blocks,
and thus (iv) immediately produces:\medskip{}

\noindent (v) $\gamma\subseteq\theta\cap\delta$.\medskip{}

It only remains to see that there cannot be a lower bound greater
than $\gamma$. Fix a congruence $\rho\subseteq\theta\cap\delta$,
and let $q$ be the step of $\rho$. By 3 of Lemma \ref{lem:aux-meet}
we have that $q\geq e_{1}$, and by 2 of the same lemma $\rho$ cannot
strictly contain $\gamma$. So we proved that $\gamma$ is a maximal
lower bound, but as the congruences of $\mathbf{L}_{n}$ form a lattice
$\gamma$ must be the meet.
\end{proof}
The \emph{frequency} of a congruence $\theta={\conr{k}{r}}$ of $\mathbf{L}_{n}$
is 
\[
\mathsf{f}_{\theta}\doteq\frac{n-\left|\bar{r}\right|}{k}.
\]
When thinking of $\theta$ as a folding, $\mathsf{f}_{\theta}$ is
the number of upward and downward slopes. Thus, another way to obtain
the frequency of $\theta$ is through counting extremes; this yields
the formula
\begin{equation}\label{eq:1}
  \mathsf{f}_{\theta}=\left|\ext\left(\theta\right)\right|-\left|\bar{r}\right|-1.
\end{equation}
It is worth noting that given the frequency of $\theta$ we can recover
its step and number of rests. The step is the quotient of the integer
division of $n$ by $\mathsf{f}_{\theta}$, and the number of rests
is the remainder.

Given $a\in\left[0,n\right]$ and $\theta$ a congruence of $\mathbf{L}_{n}$
let $\theta_{a}$ be the restriction of $\theta$ to the interval
$\left[0,a\right]$. That is, 
\[
\theta_{a}\doteq\theta\cap\left(\left[0,a\right]\times\left[0,a\right]\right).
\]
It is easily checked using Theorem \ref{th:charact-congruences} that
if $a$ is an extreme of $\theta$ that is not the right part of a
rest then $\theta_{a}$ is a congruence of $\mathbf{L}_{a}$.

We conclude this section with some facts concerning frequencies needed
in the sequel.
\begin{lem}
\label{lem:frecuencias}Let $\theta={\conr{k}{r}}$ and $\delta={\conr{l}{s}}$
be such that their join is not trivial and let $e_{1}$ be the first
positive element in $\ext(\theta)\cap\ext(\delta)$. Then:
\begin{enumerate}
\item $\mathsf{f}_{\theta\wedge\delta}=\left|\ext\left(\theta\right)\cap\ext\left(\delta\right)\right|-\left|\bar{r}\cap\bar{s}\right|-1$.
\item $\mathsf{f}_{\theta}=\mathsf{f}_{\theta_{e_{1}}}\mathsf{f}_{\theta\wedge\delta}$.
\item $\left(\theta\vee\delta\right)_{e_{1}}=\theta_{e_{1}}\vee\delta_{e_{1}}$.
\item $\mathsf{f}_{\theta\vee\delta}=\mathsf{f}_{\theta_{e_{1}}\vee\delta_{e_{1}}}\mathsf{f}_{\theta\wedge\delta}$.
\end{enumerate}
\end{lem}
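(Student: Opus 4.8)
I would prove the four items in order, as each feeds into the next. For item~(1), recall that the meet is $\theta\wedge\delta=\langle e_1;\overline{r}\cap\overline{s}\rangle$ by Theorem~\ref{thm:meet}, so its step is $e_1$ and its sequence of rests is $\overline{r}\cap\overline{s}$. Applying the ``counting extremes'' formula for the frequency, namely $\mathsf{f}_\theta=|\ext(\theta)|-|\overline{r}|-1$, to the congruence $\theta\wedge\delta$ gives $\mathsf{f}_{\theta\wedge\delta}=|\ext(\theta\wedge\delta)|-|\overline{r}\cap\overline{s}|-1$, so it suffices to check $\ext(\theta\wedge\delta)=\ext(\theta)\cap\ext(\delta)$. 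The inclusion $\subseteq$ is point~1 of Lemma~\ref{lem:aux-meet} applied to $\rho=\theta\wedge\delta\subseteq\theta$ and $\subseteq\delta$. For the reverse inclusion, take $e\in\ext(\theta)\cap\ext(\delta)$: writing it in the enumeration $0=e_0<\cdots<e_N=n$ from the proof of Theorem~\ref{thm:meet}, one reads off from the definition of $\langle e_1;\overline{r}\cap\overline{s}\rangle$ (using consequence~(ii) of that proof, that $e_1$ divides $e_j-\Delta_{\overline{r}\cap\overline{s}}(e_j)$) that $e_j$ lies in $0/(\theta\wedge\delta)\cup e_1/(\theta\wedge\delta)=\ext(\theta\wedge\delta)$.

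For item~(2), the idea is that the folding of $\theta$ on $[0,n]$ is $\mathsf{f}_{\theta\wedge\delta}$ consecutive reflected copies of its restriction to $[0,e_1]$ — this is exactly Remark~\ref{rem:periodicidad}/Lemma~\ref{lem:periodicas}. Concretely, $n=e_1\cdot\mathsf{f}_{\theta\wedge\delta}$ (since by~(i) in the proof of Theorem~\ref{thm:meet} all non-rest gaps $e_{j+1}-e_j$ equal $e_1$, and one checks the rest-gaps also contribute compatibly; alternatively $\mathsf{f}_{\theta\wedge\delta}=n/e_1$ directly from the frequency formula applied to the meet). Counting upward-and-downward slopes: each of the $\mathsf{f}_{\theta\wedge\delta}$ blocks $[e_{(j)},e_{(j)}+e_1]$ between consecutive $e_1$-multiples contributes exactly $\mathsf{f}_{\theta_{e_1}}$ slopes to $\theta$ by the periodicity, and rests at common extremes don't add slopes, so $\mathsf{f}_\theta=\mathsf{f}_{\theta_{e_1}}\cdot\mathsf{f}_{\theta\wedge\delta}$. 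I would make this rigorous via the arithmetic formula $\mathsf{f}_\theta=(n-|\overline{r}|)/k$ together with the decomposition of $\overline{r}$ into its parts inside each block and the relation between the step of $\theta$ and that of $\theta_{e_1}$.

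For item~(3), the containment $\theta_{e_1}\vee\delta_{e_1}\subseteq(\theta\vee\delta)_{e_1}$ is immediate since $\theta_{e_1}\subseteq\theta$, $\delta_{e_1}\subseteq\delta$ and restriction to $[0,e_1]$ is monotone. For the reverse, I need that the join computed inside $\mathbf{L}_{e_1}$ already captures all identifications that $\theta\vee\delta$ makes within $[0,e_1]$; this should follow because $e_1$ is a common extreme that is not the right part of a rest (so $\theta_{e_1}$, $\delta_{e_1}$ are genuine congruences of $\mathbf{L}_{e_1}$ by the remark before this lemma), and any $(\theta\vee\delta)$-chain connecting two points of $[0,e_1]$ can be ``folded back'' into $[0,e_1]$ using the reflection symmetry of Lemma~\ref{lem:periodicas} — this reflection argument is the step I expect to be the main obstacle, since one must argue that excursions of the chain outside $[0,e_1]$ can be replaced by equivalent moves inside without leaving $[0,e_1]$. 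Finally, item~(4) follows by combining the previous three: applying~(2) to the congruence $\theta\vee\delta$ in place of $\theta$ gives $\mathsf{f}_{\theta\vee\delta}=\mathsf{f}_{(\theta\vee\delta)_{e_1}}\cdot\mathsf{f}_{(\theta\vee\delta)\wedge?}$, but more directly one observes $(\theta\vee\delta)\wedge(\theta\wedge\delta)=\theta\wedge\delta$ so $e_1$ is also the first positive common extreme for $\theta\vee\delta$ paired with $\theta\wedge\delta$ (they share all extremes since $\theta\wedge\delta\subseteq\theta\vee\delta$); then~(2) reads $\mathsf{f}_{\theta\vee\delta}=\mathsf{f}_{(\theta\vee\delta)_{e_1}}\cdot\mathsf{f}_{\theta\wedge\delta}$, and substituting~(3) finishes the proof.
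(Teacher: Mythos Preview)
Your proposal is correct and follows essentially the same route as the paper for all four items: item~1 via the formula $\mathsf{f}_\gamma=|\ext(\gamma)|-|\text{rests}|-1$ applied to the explicit meet from Theorem~\ref{thm:meet}; item~2 via the periodicity of Remark~\ref{rem:periodicidad}; and item~4 by applying item~2 to the pair $(\theta\vee\delta,\theta\wedge\delta)$ and then substituting item~3.

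The one place you overcomplicate things is item~3. You flag the ``folding back'' of a $(\theta\vee\delta)$-chain into $[0,e_1]$ as the main obstacle and propose to do it via the reflection symmetry of Lemma~\ref{lem:periodicas}. The paper's argument is much slicker and avoids any reflection bookkeeping: since $e_1$ is the step of $\theta\wedge\delta$ (Theorem~\ref{thm:meet}), \emph{every} $x_j$ in a chain $a\mathrel{\theta}x_1\mathrel{\delta}x_2\mathrel{\theta}\cdots\mathrel{\delta}b$ has a $(\theta\wedge\delta)$-representative $x'_j\in[0,e_1]$. Because $\theta\wedge\delta\subseteq\theta$ and $\theta\wedge\delta\subseteq\delta$, replacing each $x_j$ by $x'_j$ preserves every $\theta$-link and every $\delta$-link in the chain, so $a\mathrel{\theta_{e_1}}x'_1\mathrel{\delta_{e_1}}x'_2\cdots\mathrel{\delta_{e_1}}b$ witnesses $\langle a,b\rangle\in\theta_{e_1}\vee\delta_{e_1}$. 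This is a one-line replacement rather than an inductive reflection argument, and it is exactly what your ``folding back'' would eventually amount to once unwound.
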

\begin{proof}
1. This is immediate from Theorem \ref{thm:meet}.

\noindent 2. Remark \ref{rem:periodicidad} says that the folding
given by $\theta$ is composed of successive reflections of the folding
given by $\theta_{e_{1}}$. As each of these reflections runs between
consecutive common extremes that do not constitute a common rest,
the number of symmetrical pieces making up $\theta$ is 
\[
\left|\ext\left(\theta\right)\cap\ext\left(\delta\right)\right|-\left|\bar{r}\cap\bar{s}\right|-1.
\]

\noindent Each of these pieces contributes the same amount of upward
and downward slopes, namely $\mathsf{f}_{\theta_{e_{1}}}$. Thus,
$\mathsf{f}_{\theta}$ is the number in the display above times $\mathsf{f}_{\theta_{e_{1}}}$,
and the desired formula follows from 1.

\noindent 3. We prove the nontrivial inclusion, that is from left
to right. Suppose $\left\langle a,b\right\rangle \in\left(\theta\vee\delta\right)_{e_{1}}$.
Then there are $x_{1},\ldots,x_{m}\in\left[0,n\right]$ such that
\[
a\thr x_{1}\delr x_{2}\thr\cdots\thr x_{m}\delr b.
\]
Now, by Theorem \ref{thm:meet} $e_{1}$ is the step of $\theta\wedge\delta$,
and so $0/\theta\wedge\delta,\ldots,e_{1}/\theta\wedge\delta$ are
all the $\theta\wedge\delta$-blocks. In particular, there are $x'_{1},\ldots,x'_{m}\in\left[0,e_{1}\right]$
such that $x_{j}\mathrel{\left(\theta\vee\delta\right)}x'_{j}$ for
$j=1,\ldots,m$. So we have 
\[
a\thr x'_{1}\delr x'_{2}\thr\cdots\thr x'_{m}\delr b,
\]
which implies $\left\langle a,b\right\rangle \in\theta_{e_{1}}\vee\delta_{e_{1}}$.

\noindent 4. Note that $\theta\wedge\delta$ and $\theta\vee\delta$
have a nontrivial join. Also, as $\theta\wedge\delta\leq\theta\vee\delta$,
the first positive common extreme of these two congruences is the
step of $\theta\wedge\delta$, which equals $e_{1}$ by Theorem \ref{thm:meet}.
So 2 produces 
\[
\mathsf{f}_{\theta\vee\delta}=\mathsf{f}_{\left(\theta\vee\delta\right)_{e_{1}}}\mathsf{f}_{\theta\wedge\delta},
\]
and we apply 3 to yield the equality we set out to prove.
\end{proof}
%
%
%
%
%

%
%
\subsection{Characterization of the order of $\Con \tira{L}$}
Our next result is a characterization of the order relation in the
lattice of congruences of a line. It is also the first step in
understanding when two congruences have a nontrivial join.

To achieve this goal, we provide an alternative
characterization of the periodicity alluded to in
Remark~\ref{rem:periodicidad}. As it was seen in
Theorem~\ref{thm:meet}, the ``skeleton'' that holds both congruences in
Figure~\ref{fig:periodicas} is actually their meet: the points at which
the structure of both 
$\th$ and $\del$ is symmetric are the extremes of $\ga=\th\y\del$. 

Take the
case of $\th$, the congruence of the lower part of the figure. What we
just stated  amounts to say that the extremes of $\ga$ are extremes of
$\th$ and the structure of rests is copied from one slope of $\ga$ to
the next. So we say
that the rests of a congruence $\th$ are \emph{compatible} with $\ga$
if  for every $r\notin\ext(\ga)$ that is part of rest of
$\th$, and $r' \gar r$,  then $r'$ is part of a rest of $\th$.
\begin{theorem}\label{th:order}
  Assume $\th={\conr{k}{r}}$ and $\ga={\conr{e}{t}}$ are nontrivial congruences of a line
  $\tira{L}$. Then $\ga\leq\th$ if and only if
  \begin{enumerate}
  \item $e\in\ext(\th)$,%
  \item\label{item:2} $\bar t \subseteq\br$, and %
  \item\label{item:3} the rests of  $\th$ are compatible with $\ga$.
  \end{enumerate}
\end{theorem}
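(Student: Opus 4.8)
The plan is to prove both directions directly from the arithmetical description of congruences in Theorem~\ref{th:charact-congruences} together with the periodicity facts recorded in Lemma~\ref{lem:periodicas} and Remark~\ref{rem:periodicidad}.

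For the forward direction, assume $\ga\leq\th$. Condition~(1) is immediate: $e$ is the step of $\ga$, hence an extreme of $\ga$, so by part~1 of Lemma~\ref{lem:aux-meet} we get $e\in\ext(\ga)\subseteq\ext(\th)$. For condition~(2), observe that if $r\in\bar t$ then $r\gar r+1$, so $r\thr r+1$ (since $\ga\subseteq\th$), hence $r\in\bar r$; thus $\bar t\subseteq\bar r$. For condition~(3), suppose $r\notin\ext(\ga)$ is part of a rest of $\th$ and $r'\gar r$; I must show $r'$ is part of a rest of $\th$. Since $r$ is part of a rest of $\th$, we have $r\in\ext(\th)$; but by hypothesis $r\notin\ext(\ga)$, so the rest at $r$ is an ``asymmetric'' feature relative to $\ga$. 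The key point is that $\ga\leq\th$ forces $\th$ to be constant on $\ga$-blocks in the following sense: applying the periodicity of $\th$ with respect to its own common extremes with $\ga$ (Lemma~\ref{lem:periodicas}, specializing $\del:=\ga$, so that $\ext(\th)\cap\ext(\ga)=\ext(\ga)$ by condition~(1)), the configuration of $\th$ between two consecutive extremes of $\ga$ is reflected to the next such interval; since $r'$ and $r$ lie at corresponding positions between consecutive $\ga$-extremes (because $r'\gar r$ and $r$ is not itself a $\ga$-extreme), $\th$ behaves the same way at both, so $r'$ is part of a rest of $\th$ as well. This last step is where I expect to have to be careful: I need that $r'\gar r$ together with $r\notin\ext(\ga)$ implies $r$ and $r'$ occupy reflected positions in consecutive $\ga$-slopes, which follows from the definition of $\conr{e}{t}$ but should be spelled out.

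For the converse, assume (1)--(3); I want $\ga\leq\th$, i.e.\ every $\ga$-related pair is $\th$-related. Since $0/\ga,\dots,e/\ga$ are all the $\ga$-blocks (Lemma~\ref{lem:basicas-con-Ln}), it suffices to show, for every $x\in[0,n]$, that $x$ is $\th$-related to the unique representative $y\in[0,e]$ of its $\ga$-block. I would prove this by induction on $x$, exactly mirroring the induction in the proof of Theorem~\ref{thm:meet}, part~(iv): for $x\leq e$ there is nothing to show; for $x>e$, let $e_j<x\leq e_{j+1}$ with $e_j,e_{j+1}$ consecutive extremes of $\ga$. By (1) these are extremes of $\th$, and by (3) the rest structure of $\th$ on $[e_j,e_{j+1}]$ is the reflection of that on $[e_{j-1},e_j]$ (using (2) to guarantee no rest of $\th$ occurs ``inside'' a $\ga$-slope at a non-$\ga$-extreme — actually (2) is not quite that statement, so the honest way is: by (2), any $r\in\bar r$ lying in the interior of a $\ga$-slope would violate (3) unless its $\ga$-mirror points are also rests, which is exactly what lets the reflection go through). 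Hence $\th$ restricted to $[e_j,e_{j+1}]$ is the mirror image of $\th$ restricted to $[e_{j-1},e_j]$, so $2e_j-x\thr x$; by the inductive hypothesis applied to $2e_j-x\in[e_{j-1},e_j]$ (or, after finitely many reflections, to a point of $[0,e]$) we get $2e_j-x\thr y$, and therefore $x\thr y$. The case where $e_j\in\bar t$ (a common rest) is handled as in Theorem~\ref{thm:meet}: then $x=e_{j+1}$ and $e_j\thr e_{j+1}$ by (2), so we again reduce to the inductive hypothesis.

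The main obstacle, in both directions, is the bookkeeping that converts the informal ``the rest structure is copied from one slope of $\ga$ to the next'' into a precise statement and back into the arithmetic of $\Delta_{\bar r}$ and the congruences $\modkk$, $\equiv_{2e}$. Concretely, the delicate point is relating the position of a rest $r$ of $\th$ within a $\ga$-slope to the positions of its $\ga$-mirrors $r'$, and checking that compatibility (3) plus $\bar t\subseteq\bar r$ is exactly the combinatorial condition that makes $\Delta_{\bar r}$ respect the reflections defining $\ga$ — equivalently, that $\th$ factors through $\mathbf{L}_n/\ga\iso\mathbf{L}_e$. Once that correspondence is nailed down, both implications are short inductions of the type already carried out for Theorem~\ref{thm:meet}.
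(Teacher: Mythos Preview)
Your forward direction is essentially what the paper does, only spelled out: the paper just says ``$\ga=\ga\wedge\th$, now apply Theorem~\ref{thm:meet} and similar arguments,'' and your use of Lemma~\ref{lem:aux-meet}(1) for condition~(1), the direct inclusion argument for~(2), and Lemma~\ref{lem:periodicas} (with $\del:=\ga$, noting $\th\vee\ga=\th$ is nontrivial) for~(3) is exactly how one unpacks that.

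Your backward direction takes a genuinely different route. The paper does not induct on the intervals $[e_j,e_{j+1}]$ by reflection; instead it proves, directly and arithmetically, a closed formula
\[
\Delta_{\bar r}(y)=2d\cdot\Delta_{\bar r}(e)+\Delta_{\bar t}(y)\pm\Delta_{\bar r}(x)
\]
for any $y\gar x\in[0,e]$ (where $d$ comes from $y-\Delta_{\bar t}(y)=\pm x+2de$), deriving it from compatibility~(3), and then substitutes to get $y-\Delta_{\bar r}(y)\con{k} x-\Delta_{\bar r}(x)$ using that $e-\Delta_{\bar r}(e)$ is a multiple of $k$ by~(1). So the paper does the ``bookkeeping'' you anticipate in one global step rather than reflection by reflection.

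Your inductive plan can be made to work, but two points need tightening. First, you write ``By~(1) these are extremes of $\th$,'' referring to all the $e_j$; but condition~(1) gives only $e=e_1\in\ext(\th)$. For $j>1$ you get $e_j\in\ext(\th)$ only after the inductive hypothesis has established that $e_j$ is $\th$-related to its $\ga$-representative in $\{0,e\}\subseteq\ext(\th)$, so this must be folded into the induction rather than asserted up front. Second, you cannot literally ``mirror the induction in the proof of Theorem~\ref{thm:meet}~(iv)'': that proof invokes Lemma~\ref{lem:periodicas}, whose hypothesis is that $\th\vee\del$ is nontrivial---exactly what you are trying to deduce here. You must instead prove the reflection $2e_j-x\thr x$ (or $2e_j+1-x\thr x$ at a rest) directly from (1)--(3); and doing so amounts to verifying, interval by interval, precisely the counting identity the paper proves in its Claim. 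What your approach buys is a more geometric narrative; what the paper's buys is that the key identity is stated and proved once, with no induction on $j$.
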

\begin{proof}
  ($\ent$) Assume $\ga\subseteq\th\neq L\times L$; then $\ga=\ga\y\th$. We
  may then apply Theorem~\ref{thm:meet} and similar arguments to show
  all three conditions hold.

  ($\tne$) We will see that for all $x\in [0,e]$, the $\ga$-class of
  $x$ is included in its $\th$-class.   Let $y>e$ such that $x\gar
  y$. Since $\displ{t}(x) =0$,  there exists a $d>0$ such that 
  \begin{equation}\label{eq:6}
    y - \displ{t}(y) = \pm x + 2d\cdot e.
  \end{equation}
  We  need the following:
  \begin{claim*}\label{cl:stronger}
    $\displ{r}(y) = 2d\cdot \displ{r}(e) + \displ{t}(y) \pm \displ{r}(x).$
  \end{claim*}
  We prove the Claim in the case that  the ``$\pm$'' is a ``$-$''.
  Our first step is to show that compatibility (Item~\ref{item:3}) implies
  that 
  the rests of $\th$ in the
  interval $[0,e)$ determine the remaining rests.

  Assume 
  \def\rs{r_s}
  $s\notin\ext(\ga)$, and let  $\rs$ be the  unique
  element of $[0,e)$ such that $s\conr{e}{t}\rs$; by definition of $\conr{e}{t}$
  there exists $b$ such that either
  \begin{enumerate}[(I)]
  \item \label{item:Caso_1}$s - \displ{t}(s) = \rs + 2b\cdot e$, or  
  \item \label{item:Caso_2} $    s - \displ{t}(s) =  -\rs + 2b\cdot e$
  \end{enumerate}
  hold. We claim that %
  \begin{equation}\label{eq:casos}
    \text{$s\in\br$ $\iff$ [$\rs\in\br$ in case~(\ref{item:Caso_1}) or
      $\rs-1\in\br$ in case~(\ref{item:Caso_2})].}
  \end{equation}

  For $(\ent)$, assume by way of contradiction that 
  there  exists $s\in\br$ falsifying (\ref{eq:casos}), and take the one with
  minimal $\rs$. Assume that $s$ satisfies~(\ref{item:Caso_1}); hence
  $\rs\notin\br$. Since by compatibility $\rs$ must be part of a rest,
  we conclude that $\rs-1\in\br$, and in particular $\rs-1>0$. 
  By subtracting 1 from Equation~(\ref{item:Caso_1}), and considering that
  $\displ{t}(s) =\displ{t}(s-1)$ (since $s-1$ cannot be a rest since
  $\ga$ is nontrivial), we obtain
  \[ s -1 - \displ{t}(s-1) = \rs-1 + 2b\cdot e, \]
  and hence $s-1 \conr{e}{t} \rs-1$. By compatibility $s-1$ must be
  part of a rest, therefore $s-2\in\br$. Now a  similar reasoning
  yields  $\rs-2\notin \br$ and $\rs-2$ is part of rest,
  contradicting our choice of $s$ as the counterexample with the
  minimal $\rs$. Case~(\ref{item:Caso_2}) and the $(\tne)$ direction are similar.
  
  We conclude in particular that the number of $\th$-rests between two
  consecutive elements of $\ext(\ga)$ not related by $\ga$ is constant
  and equals $\displ{r}(e)$. Then, taking $f$ as the greatest
  extreme of $\ga$ such that $f\leq y$, we easily  have
  \begin{equation*}
    \displ{r}(f) = (2d-1)\cdot\displ{r}(e) + \displ{t}(f) =  (2d-1)\cdot\displ{r}(e) + \displ{t}(y),
  \end{equation*}
  where the last equality holds by definition of $f$ and item~\ref{item:2}.
  Now $\displ{r}(y)$, the number of $\th$-rests before $y$, is equal to
  the number of rests before $f$ plus the number of rests between $f$
  and $y$. 
  \begin{equation*}
    \displ{r}(y) = (2d-1)\cdot\displ{r}(e) + \displ{t}(y) +\card{\{\text{$\th$-rests between $f$
        and $y$}\}}.
  \end{equation*}
  By using compatibility one more time we realize that the
  last summand equals the number of $\th$-rests between $x$ and $e$, that is
  $\displ{r}(e) -\displ{r}(x)$. So we finally obtain the Claim.

  The case where the ``$\pm$'' is a ``$+$'' is handled very similarly; in
  this case, $\displ{r}(f) = 2d\cdot\displ{r}(e)  + \displ{t}(y)$ and
  $\card{\{\text{$\th$-rests between $f$ and $y$}\}} =
  \displ{r}(x)$. This finishes the proof of the Claim.
  \smallskip

  We show next that $x \conr{k}{r} y$, i.e.,
  \begin{equation}\label{eq:7}
    y - \displ{r}(y) \con{k} x -\displ{r}(x)
  \end{equation}
  Let us work from the left hand side; substitute $\displ{r}(y)$
  according to the Claim:
  \begin{align*}
    y - \displ{r}(y) & = y -2d\cdot\displ{r}(e) - \displ{t}(y)
    \mp \displ{r}(x) \\
    & = \pm x   \mp \displ{r}(x) + 2d\cdot e -2d\cdot \displ{r}(e)  && \text{by Eq.~(\ref{eq:6})}\\
    & = \pm (x  - \displ{r}(x))  + 2d\cdot (e - \displ{r}(e)).
  \end{align*}
  But now, since $e\in\ext(\th)$, $e - \displ{r}(e)$ is congruent
  to $0$ or $k$ modulo $2k$, hence it is a multiple of $k$. Thus we have
  proved (\ref{eq:7}).

\end{proof}

%
%
%
%
%

%
\section{The case of Frequency Two}
\label{sec:case-frequency-two}

We now obtain the main results of the paper restricted to the case
when one of the congruences has frequency equal to 2. We need one more
concept to handle this case.

We say that a congruence $\conr{k}{r}$ of $\tira{L}_n$ is 
\emph{mirrored} provided that  for
all $r$ 
\[r\in\bar r \text{ if and only if } n-r-1\in \bar r.\] 
It is easy to see that a congruence is mirrored if and only if the folding given by it   
is symmetric with respect to its middle point. 

Observe that for a mirrored $\conr{k}{r}$  we have, for all $a$
\begin{equation}\label{eq:symmetry}
  \displ{r}(a)+\displ{r}(n-a) = \card{\br}.
\end{equation}
By applying this to different $b$ and $c$ and equating, we conclude
that 
\begin{equation}\label{eq:2}
  \displ{r}(b) - \displ{r}(c) = \displ{r}(n-c) -\displ{r}(n-b).
\end{equation}
We use this concept to study joins $\th\o\del$ in which
$\alt_\del=2$. It is immediate that there are two cases for this; let
$l=\bigl\lfloor\tfrac{n}{2}\bigr\rfloor$. If $n$ is
even, then  $\del ={\con{l}}$, and if $n$ is odd, then  $\del
={\con{l;l}}$. In either case we have that  $a\delr b$ if and only if
$a=b$ or $a+b=n$.

\begin{lemma}\label{lem:symmetric}
  Assume $\alt_\del=2$. 
  \begin{enumerate}
  \item \label{item:mirrored-permute} If $\th={\conr{k}{r}}$ is mirrored,
  $\th$ and $\del$ permute.
  \item \label{item:not-mirror-trivial} If $\th$ is not mirrored, then $\th\o\del$ is
  trivial.
  \end{enumerate}
\end{lemma}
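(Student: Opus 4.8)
The plan is to treat the two items separately, both times exploiting the explicit description $a \delr b \iff a = b \text{ or } a+b = n$.

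For item~\ref{item:mirrored-permute}, since $R$ is symmetric on a line, congruences permuting is equivalent to $\th \o \del = \th \circ \del$, and it suffices to check $\del \circ \th \subseteq \th \circ \del$ (the reverse inclusion is symmetric). So suppose $a \delr c \thr b$; I want $a'$ with $a \thr a' \delr b$. If $a = c$ there is nothing to do, so assume $a + c = n$, i.e. $c = n-a$. The natural candidate is $a' = n - b$: then $a' \delr b$ automatically, and what remains is to show $a \thr n-b$ given $n-a \thr b$. This is exactly where \emph{mirrored} enters: unwinding $\conr{k}{r}$, the hypothesis $n-a \thr b$ says $(n-a) - \displ{r}(n-a) \con{k} b - \displ{r}(b)$, and I must derive $a - \displ{r}(a) \con{k} (n-b) - \displ{r}(n-b)$. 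Using Equation~(\ref{eq:symmetry}) to rewrite $\displ{r}(n-a) = \card{\br} - \displ{r}(a)$ and $\displ{r}(n-b) = \card{\br} - \displ{r}(b)$, the two statements become $n - a + \displ{r}(a) - \card{\br} \con{k} b - \displ{r}(b)$ and $a - \displ{r}(a) \con{k} n - b + \displ{r}(b) - \card{\br}$; since $\con{k}$ is symmetric under $x \mapsto -x$ modulo $2k$ and $n - \card{\br}$ is a multiple of $k$ (this is the third bullet of Theorem~\ref{th:charact-congruences}, so $n - \card{\br} \equiv_{2k} 0$ or $\equiv_{2k} k$, hence $n - \card{\br} \con{k} 0$), these two are equivalent. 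One small point to verify is that $a'=n-b$ really lies in $[0,n]$, which is clear. The potential subtlety — the main obstacle — is bookkeeping with the value of $n - \card{\br}$ modulo $2k$ versus modulo $k$: one must be careful that $\con{k}$ only sees things up to sign mod $2k$, and that the extra factor $2d\cdot e$-type shifts present in the general definition of $\conr{k}{r}$ don't obstruct the argument; here they don't, precisely because mirroredness converts a $\displ{r}$-reflection into the needed sign flip.

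For item~\ref{item:not-mirror-trivial}, I would argue contrapositively: if $\th \o \del$ is nontrivial, then $\th$ is mirrored. Since $\del$ has frequency $2$, its only extremes are $0$, $l = \lfloor n/2 \rfloor$ (its step), and $n$; in particular $0, n \in \ext(\th) \cap \ext(\del)$ always, and since $\th\o\del$ is nontrivial Lemma~\ref{lem:coinciden-en-etas} and Lemma~\ref{lem:periodicas} apply. The cleanest route: $0$ and $n$ are common extremes with nothing strictly between them forced to be a common extreme unless $l$ is one too; but whether or not $l\in\ext(\th)$, Remark~\ref{rem:periodicidad} / Lemma~\ref{lem:periodicas} tells us that $\th$ restricted to $[0,e_1]$ determines all of $\th$ by successive reflections, where $e_1$ is the first positive common extreme. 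I claim $e_1$ must be such that the reflection at $e_1$ carries the rest-structure of $[0,e_1]$ onto $[e_1, 2e_1]$, and iterating, onto all of $[0,n]$; comparing the slope from $0$ and the slope into $n$, this forces $r \in \br \iff n - r - 1 \in \br$, i.e. $\th$ is mirrored. The one case needing care is when $l \notin \ext(\th)$: then $e_1 = n$ would make Lemma~\ref{lem:aux-meet}(4) give $\th \wedge \del = \mathrm{Id}$, but that does not by itself kill the join; instead I use that $l$ not being an extreme of $\th$ together with $l$ being the unique nonzero, non-$n$ extreme of $\del$ means $l - 1 \thr l + 1$ while $l - 1 \delr l+1$ fails... actually $l-1$ and $l+1$ satisfy $(l-1)+(l+1) = n$ so $l-1 \delr l+1$; then $l-1 \;(\th\o\del)\; l \;(\th\o\del)\; l+1$ would follow if $l \delr$ something adjacent — here one checks $l \thr l$ trivially and must instead produce the triple $x-1, x, x+1$ all $(\th\o\del)$-related from a mismatch, exactly as in the proof of Lemma~\ref{lem:coinciden-en-etas}. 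This gluing-of-reflections argument, and correctly handling the odd-$n$ rest at $l$, is the main obstacle; everything else is routine unwinding of $\conr{k}{r}$ via $\displ{r}$ and Equations~(\ref{eq:symmetry})–(\ref{eq:2}).
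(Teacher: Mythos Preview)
Your treatment of item~\ref{item:mirrored-permute} is essentially the paper's: set the partner to $n-b$, unwind $\conr{k}{r}$ via Equation~(\ref{eq:symmetry}), and use $k\mid n-\card{\br}$. The paper splits into the two cases $c-\displ{r}(c)\modkk \pm(b-\displ{r}(b))$ and computes each explicitly, but the content is the same.

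For item~\ref{item:not-mirror-trivial} your route diverges from the paper's and has a real gap. You argue contrapositively via Lemma~\ref{lem:periodicas}, reflecting around a common extreme $e\in\ext(\th)\cap\ext(\del)$ with $e\notin\{0,n\}$. But Lemma~\ref{lem:periodicas} is vacuous when there is no such $e$, and that is exactly your unhandled case $l\notin\ext(\th)$: since $\ext(\del)\subseteq\{0,l,l{+}1,n\}$, and Lemma~\ref{lem:coinciden-en-etas} forces $l{+}1\notin\ext(\th)$ once $l\notin\ext(\th)$, you end up with $\ext(\th)\cap\ext(\del)=\{0,n\}$ and no reflection point to use. (Concretely, for $n=6$, $\th={\con{2}}$, $\del={\con{3}}$, one has $l=3\notin\ext(\th)$ and the join is nontrivial; your argument must still output ``$\th$ is mirrored'' here, but the periodicity lemma gives you nothing.) Your sketched patch, producing three consecutive $(\th\o\del)$-related points from the mere fact $l\notin\ext(\th)$, does not go through: as you note yourself, $l-1\delr l+1$ already holds, so no mismatch is exhibited.

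The paper avoids this entirely by arguing directly rather than contrapositively: take the \emph{least} $r$ at which the mirror condition fails, say $r=r_j\in\br$ but $n{-}r{-}1\notin\br$. From $n{-}r\,\delr\,r\,\thr\,r{+}1\,\delr\,n{-}r{-}1$ one gets $n{-}r\mathrel{(\th\o\del)}n{-}r{-}1$, and then an explicit $\displ{r}$-computation using minimality of $r$ and the divisibility conditions of Theorem~\ref{th:charact-congruences} shows that either $n{-}r$ is a rest of $\th$ or $n{-}r{-}1\thr n{-}r{+}1$; either way three consecutive points are identified and Lemma~\ref{lem:carac-trivial} finishes. This argument never needs a common extreme in $(0,n)$, which is why it works uniformly.
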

\begin{proof}
  For item~\ref{item:mirrored-permute}, assume $a \delr b \conr{k}{r}c$ with $a\neq b$. Hence $a =
  n-b$. Let $x\doteq n-c$. Immediately $c\delr x$; it is enough to
  see that $x\conr{k}{r} a$. By definition this is $x- \displ{r}(x)
  \con{k} a -\displ{r}(a)$, i.e.,
  \begin{equation}\label{eq:3}
    n-c- \displ{r}(n-c)  \con{k} n-b -\displ{r}(n-b).
  \end{equation}
  
  We have two cases, according to the definition of $ b
  \conr{k}{r}c$. First consider that $c -\displ{r}(c) \modkk
  b-\displ{r}(b)$. We operate as follows:
  \begin{align*}
    c -\displ{r}(c) \modkk   b-\displ{r}(b) \iff \quad &\\ 
        c + (\displ{r}(b) - \displ{r}(c)) & \modkk b \\
        c + (\displ{r}(n-c) -\displ{r}(n-b))& \modkk b && \text{by
      Eq.~(\ref{eq:2})}\\
        -b -\displ{r}(n-b) & \modkk -c  -  \displ{r}(n-c)\\
        n-b -\displ{r}(n-b) & \modkk n-c  -  \displ{r}(n-c),
  \end{align*}
  and this implies (\ref{eq:3}).
  
  The other case is when $c -\displ{r}(c) \modkk  -b+\displ{r}(b)$.
  We have
  \begin{align*}
    c -\displ{r}(c)  \modkk   -b+\displ{r}(b) \iff \quad& \\ 
        -\displ{r}(b) - \displ{r}(c) &\modkk -c -b  \\
        -\displ{r}(b) - \displ{r}(c) &\modkk n -c +n -b -
    2\card{\br} && \text{since $k\mid n-\card{\br}$}  \\
        \card{\br} -\displ{r}(b) +\card{\br} -\displ{r}(c) &\modkk n -c +n -b 
    \\ 
        \displ{r}(n-b) + \displ{r}(n-c) &\modkk n -c +n -b &&
    \text{by Eq.~(\ref{eq:symmetry})}    \\ 
         - (n-b -\displ{r}(n-b))  &\modkk n-c  -  \displ{r}(n-c),
  \end{align*}
  and this also implies (\ref{eq:3}).

  Now we turn to the proof of item~\ref{item:not-mirror-trivial}.
  Suppose that $\th$ is not mirrored. Let $r$ be the least element of
  $L$ such that the equivalence $r\in 
  \bar r \iff n-r-1\in \bar r$ is false. Assume first that for this
  $r$,  $r =r_j\in   \bar r$ but $ n-r-1\notin \bar r$.
  
  Since $r\in \bar r$, $0<r<n-1$, and hence $0\leq n-r-2, n-r \leq
  n$. By the observations prior
  to Lemma~\ref{lem:symmetric} we know that $n-r\delr r$ and $n-r-1\delr r+1$, and since
  $r\in\br$ implies $r \thr r+1 $, we conclude that 
  \[n-r\delr r \thr r+1 \delr n-r-1,\]
  We claim that either  $n-r-1 \thr
  n-r+1$ or  there  is a rest at $n-r$. This would show  $\th\o\del$ 
  trivial by Lemma~\ref{lem:carac-trivial} since $n-r-1$, $n-r$ and
  $n-r+1$ are related by this  congruence, and we are done.
  
  To see the claim, suppose there is no a rest at $n-r$. Then we have
  $\displ{r}(n-r-1)  = \displ{r}(n-r+1) = m-j+1$ where $m=\card{\br}$,
  by the minimality of $r=r_j$ and because $n-r-1\notin  \br$.
  By expanding the definition of $n-r-1 \conr{k}{r} n-r+1$, it
  suffices to prove
  \begin{equation*}\label{eq:4}
    n-r-1 -\displ{r}(n-r-1)  \modkk   -(n-r+1)+\displ{r}(n-r+1).
  \end{equation*}
  In our context this is equivalent to  $n-r-1 + (n-r+1) - 2(m-j+1)\modkk 0$ and
  then to $ k \mid n - r_j - m +j-1$. But this  follows immediately
  from Theorem~\ref{th:charact-congruences}.

  The case in which $r\notin   \bar r$ and  $ n-r-1\in \bar r$ for the
  minimal $r$ is entirely analogous.  
\end{proof}

\begin{corollary}
  Assume that $\th,\del\in\Con \tira{L}_n$ and $\alt_\del=2$. If
  $\th\o\del$ is nontrivial then $\th$ and $\del$ permute.
\end{corollary}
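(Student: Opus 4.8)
The plan is to obtain the corollary directly from Lemma~\ref{lem:symmetric}, after a brief case analysis on the shape of $\th$. First I would recall, as observed just before that lemma, that the hypothesis $\alt_\del=2$ determines $\del$ completely: with $l=\lfloor n/2\rfloor$, one has $\del=\con{l}$ if $n$ is even and $\del=\con{l;l}$ if $n$ is odd. In particular $\del$ is a nontrivial congruence of the standard form $\conr{l}{s}$, and $n\geq 2$, so Lemma~\ref{lem:carac-trivial} is available. I would also note at the outset that ``permute'' means $\th\circ\del=\del\circ\th$, and that Lemma~\ref{lem:symmetric}(\ref{item:mirrored-permute}) in fact proves the inclusion $\del\circ\th\subseteq\th\circ\del$; the reverse inclusion is then automatic by taking converses, since $\th$ and $\del$ are symmetric relations. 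So it suffices to reach a situation in which Lemma~\ref{lem:symmetric}(\ref{item:mirrored-permute}) applies.

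Next I would split on $\th$ using the classification of congruences of a line. If $\th=L_n\times L_n$, then $\th\o\del=L_n\times L_n$ has a single equivalence class and is thus trivial, contradicting the hypothesis; hence this case does not arise. Otherwise $\th$ is nontrivial, and by Lemma~\ref{lem:toda_con_es_un_par} it is of the form $\conr{k}{r}$ for suitable $k$ and $\br$. Since $\th\o\del$ is assumed nontrivial, the contrapositive of Lemma~\ref{lem:symmetric}(\ref{item:not-mirror-trivial}) forces $\th$ to be mirrored, and then Lemma~\ref{lem:symmetric}(\ref{item:mirrored-permute}) gives that $\th$ and $\del$ permute, as desired.

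I do not expect any real obstacle here: all of the work is contained in Lemma~\ref{lem:symmetric}, and the only thing to be careful about is the bookkeeping above---excluding the full relation (which the hypothesis rules out) and recording, via Lemma~\ref{lem:toda_con_es_un_par}, that every remaining $\th$ is of the standard form $\conr{k}{r}$, so that the notion of being mirrored makes sense and Lemma~\ref{lem:symmetric} indeed applies to it.
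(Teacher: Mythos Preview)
Your proposal is correct and matches the paper's approach: the corollary is stated without proof precisely because it follows immediately from Lemma~\ref{lem:symmetric} via the case split you describe. Your bookkeeping (ruling out $\th=L_n\times L_n$ via the nontriviality hypothesis, and invoking Lemma~\ref{lem:toda_con_es_un_par} to put any remaining $\th$ in the form $\conr{k}{r}$ so that ``mirrored'' makes sense) is exactly what is needed, and your parenthetical remark that one inclusion in Lemma~\ref{lem:symmetric}(\ref{item:mirrored-permute}) yields the other by symmetry is a fair reading of that lemma's proof.
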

%
%
%
%
%
%

\section{Trajectories}
\label{sec:trajectories}

We introduce next a practical representation of pairs of congruences
that will be of great help in understanding the join.
\begin{notation}
  In the sequel, $\tira{L} =
  \<L, R\> =\<\{0,\dots,n\}, R\>$ will denote a line (with $n\geq 1$)
  and $\th={\conr{k}{r}}$ 
  and $\del={\conr{l}{s}}$ will be nontrivial congruences of $\tira{L}$ with $k\leq
  l$. 
\end{notation}

Consider a rectangle of base $l$ and height $k$, that for
convenience can be regarded as sitting on the Euclidean plane and having
the origin and points of coordinates $(0,k)$, $(l,k)$, and $(l,0)$ as
vertices. Now we represent each element $x$ of $\tira{L}$ as a point
$P(x)$ with coordinates  $\bigl(P_1(x),P_2(x)\bigr)= (\min x/\del, \min x/\th)$, and for each
$0\leq x <n$, we connect the points $P(x)$ and $P(x+1)$ with a line
segment.

The resulting configuration of points and segments is called the
\emph{trajectory diagram of $\th\o\del$}. By construction, 
for any
$0\leq x,y \leq n$, $x\mathrel{(\th\o\del)}y$ if and only if there
exists a sequence $x_0,\dots,x_j$ of elements of $\tira{L}$   such
that $x_0=x$, $x_j=y$, and for each $0\leq i <j$, $P(x_i)$ and $P(x_{i+1})$ have
the same abscissa or the same ordinate. The uses of trajectory diagrams
in this paper will be restricted to congruences having $\ext(\th) \cap
\ext(\del) = \{0,n\}$, although they have been a key tool in understanding
the  general case.

\definecolor{eqdfdf}{rgb}{0.88,0.87,0.87}
\definecolor{wwwwww}{rgb}{0.6,0.6,0.6}
\begin{figure}[h]
  \begin{center}
    \begin{tikzpicture}[line cap=round,line join=round,>=triangle 45,x=1cm,y=1cm]
      \draw [color=eqdfdf,dash pattern=on 1pt off 1pt, xstep=1cm,ystep=1cm] (1,1) grid (7,5);
      \draw [line width=0.4pt,color=wwwwww] (1,1)-- (1,5);
      \draw [line width=0.4pt,color=wwwwww] (1,5)-- (7,5);
      \draw [line width=0.4pt,color=wwwwww] (7,5)-- (7,1);
      \draw [line width=0.4pt,color=wwwwww] (7,1)-- (1,1);
      \draw (1,1)-- (3,3)-- (5,5)-- (6,5)-- (7,4)-- (4,1)-- (1,4)-- (2,5)-- (3,5)-- (7,1);
      \fill [color=black] (1,1) circle (1.5pt);
      \draw[color=black] (0.85,1.16) node {$0$};
      \fill [color=black] (2,2) circle (1.5pt);
      \draw[color=black] (1.92,2.24) node {$1$};
      \fill [color=black] (3,3) circle (1.5pt);
      \draw[color=black] (2.98,3.28) node {$2$};
      \fill [color=black] (4,4) circle (1.5pt);
      \draw[color=black] (3.5,4.03) node {$3$};
      \fill [color=black] (5,5) circle (1.5pt);
      \draw[color=black] (5.07,5.23) node {$4$};
      \fill [color=black] (6,5) circle (1.5pt);
      \draw[color=black] (6.06,5.2) node {$5$};
      \fill [color=black] (7,4) circle (1.5pt);
      \draw[color=black] (7.15,4.08) node {$6$};
      \fill [color=black] (6,3) circle (1.5pt);
      \draw[color=black] (5.9,3.17) node {$7$};
      \fill [color=black] (5,2) circle (1.5pt);
      \draw[color=black] (4.92,2.19) node {$8$};
      \fill [color=black] (4,1) circle (1.5pt);
      \draw[color=black] (4,1.3) node {$9$};
      \fill [color=black] (3,2) circle (1.5pt);
      \draw[color=black] (3.11,2.24) node {$10$};
      \fill [color=black] (2,3) circle (1.5pt);
      \draw[color=black] (2.08,3.25) node {$11$};
      \fill [color=black] (1,4) circle (1.5pt);
      \draw[color=black] (1.45,4.05) node {$12$};
      \fill [color=black] (2,5) circle (1.5pt);
      \draw[color=black] (2.08,5.23) node {$13$};
      \fill [color=black] (3,5) circle (1.5pt);
      \draw[color=black] (3.14,5.22) node {$14$};
      \fill [color=black] (4,4) circle (1.5pt);
      \draw[color=black] (4.45,4.06) node {$15$};
      \fill [color=black] (5,3) circle (1.5pt);
      \draw[color=black] (5.21,3.17) node {$16$};
      \fill [color=black] (6,2) circle (1.5pt);
      \draw[color=black] (6.2,2.19) node {$17$};
      \fill [color=black] (7,1) circle (1.5pt);
      \draw[color=black] (7.22,1.08) node {$18$};
    \end{tikzpicture}
  \end{center}
  \caption{Trajectory diagram of $\con{4;4,13}\o\con{6}$ on $\tira{L}_{18}$.} \label{fig:trajectory}
\end{figure}

It may be considered that the diagram pictures the trajectory of a
particle that starting at the origin $P(0)$ moves through the line
$\tira{L}$, bouncing on the horizontal borders of the rectangle at
elements $e$ of $\ext(\th)\setminus\{0,n\}$ and on vertical borders at elements of
$\ext(\del)\setminus\{0,n\}$. There are two types of \emph{bounces}, depending whether
$e$ is part of a rest or not. The part of the trajectory that
lies in the interior of the rectangle consists of line segments with
slope  $\pm 1$. We will see that trajectories picturing nontrivial
joins do not have ``overlapping'' segments, i.e.,
self-intersections consist in isolated points. It is  clear that such intersections
 occur only at points with (half-) integral
coordinates. We call \emph{crossings} such intersections in the
interior of the rectangle. 

When a crossing occurs at a point $Q$ with integral coordinates, then
there must exist two elements of the line $a$ and $d$ such that
$Q=Q(a)=Q(d)$, and their immediate successors and predecessors satisfy
either 
\begin{equation}\label{eq:8}
  d\pm 1 \thr a-1 \delr d\mp 1 \thr a+1 \delr d\pm 1
\end{equation}
or
\begin{equation}\label{eq:9}
  d\pm 1 \delr a-1 \thr d\mp 1 \delr a+1 \thr d\pm 1.
\end{equation}
Similarly, when a crossing occurs at a point $P$ with half-integral
coordinates, there must be elements $b,c$ satisfying either
\[
b \delr c \thr b\pm 1 \delr c\pm 1 \thr b
\]
or
\[
b \thr c \delr b\pm 1 \thr c\pm 1 \delr b
\]
where $|P_i-P_i(b)| = |P_i-P_i(c)| = \tfrac{1}{2}$ for $i=1,2$.

We will usually speak in geometrical terms when referring to diagrams,
as in \emph{there is rest at the bounce at $k$} and the like.  

\begin{lemma}\label{lem:dist-un-medio}
Suppose that in the trajectory diagram of $\th\o\del$ one of the following
happens:
\begin{itemize}
\item There are two crossings at
  points $P=(P_1,P_2)$ and  $Q=(Q_1,Q_2)$ such that $|P_i -
  Q_i|=\tfrac{1}{2}$ for some
  $i=1,2$,
\item there is a crossing at $P$ such that there is $x\in \ext(\th)\cup\ext(\del)$ which is
  not part of  a rest and $|P_i(x)-P_i|=\tfrac{1}{2}$ for some
  $i=1,2$, or 
\item  there is a crossing at $P$ such that there is $x\in L$ which is
  part of  a rest of either $\th$ or $\delta$ and $P_i(x)=P_i$ for some
  $i=1,2$.
\end{itemize}
Then the join is trivial.
\end{lemma}
More intuitively: If the trajectory diagram of $\th\o\del$ has two crossings with one
coordinate differing in $\tfrac{1}{2}$, the join is trivial. And the same
applies to a crossing with one coordinate differing in $\tfrac{1}{2}$
from ``the center'' of a bounce. An example of the first situation is
depicted in Figure~\ref{fig:diff_one_half}.
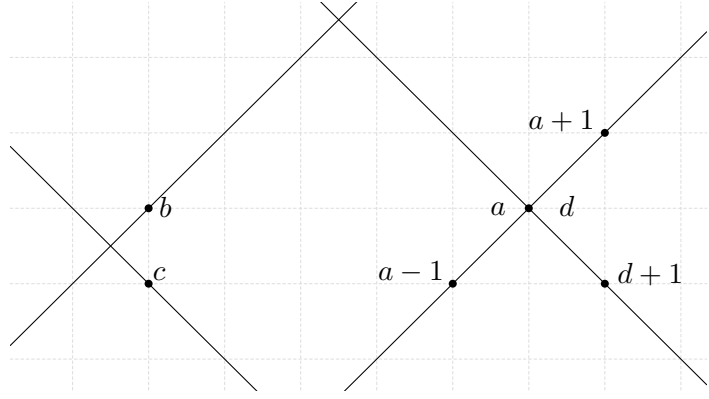
\begin{figure}
  \begin{center}
    \definecolor{uququq}{rgb}{0.25,0.25,0.25}
    \definecolor{eqdfdf}{rgb}{0.88,0.87,0.87}
    \begin{tikzpicture}[line cap=round,line join=round,>=triangle 45,x=1cm,y=1cm]
      \draw [color=eqdfdf,dash pattern=on 1pt off 1pt, xstep=1cm,ystep=1cm] (10.18,7.59) grid (19.34,12.73);
      \clip(10.18,7.59) rectangle (19.34,12.73);
      \draw [domain=10.18:19.34] plot(\x,{(--21-1*\x)/1});
      \draw [domain=10.18:19.34] plot(\x,{(--2-1*\x)/-1});
      \draw [domain=10.18:19.34] plot(\x,{(-14--2*\x)/2});
      \draw [domain=10.18:19.34] plot(\x,{(--54-2*\x)/2});
        \fill [color=black] (12,9) circle (1.5pt);
        \draw[color=black] (12.14,9.13) node {$c$};
        \fill [color=black] (12,10) circle (1.5pt);
        \draw[color=black] (12.22,10.02) node {$b$};
        \fill [color=black] (16,9) circle (1.5pt);
        \draw[color=black] (15.45,9.16) node {$a-1$};
        \fill [color=black] (18,11) circle (1.5pt);
        \draw[color=black] (17.43,11.16) node {$a+1$};
        \fill [color=black] (18,9) circle (1.5pt);
        \draw[color=black] (18.6,9.13) node {$d+1$};
        \fill [color=black] (17,10) circle (1.5pt);
        \draw[color=black] (16.6,10) node {$a$};
        \draw[color=black] (17.5,10.03) node {$d$};
    \end{tikzpicture}
  \end{center}
\caption{Two crossings with a  $\tfrac{1}{2}$
  difference.} \label{fig:diff_one_half}
\end{figure}
\begin{proof}
We only prove the first item. 
Without loss of generality, assume that $Q$ has integral coordinates.
Consider the case when $|P_2 - Q_2|=\tfrac{1}{2}$. By the observation
prior to the statement of the Lemma, let $a,d\in L$ satisfying one of
the equations (\ref{eq:8}) or (\ref{eq:9}). We might also find $b,c\in L$ such
that  $|P_i-P_i(b)|
= |P_i-P_i(c)| = \tfrac{1}{2}$ for $i=1,2$,  $b\thr a$ and $b\delr c$.
There are two possibilities for the relative position of  the two
crossings:
\begin{enumerate}
\item $c\thr a-1 \delr d +\epsilon$ (where $\epsilon=\pm 1$): In this
  sub-case, we have
  \[
  d+\epsilon \thr a-1 \delr d-\epsilon \thr a+1 \delr d+\epsilon.
  \]
  Hence we have the following chain of relations
  \[
  a\thr b \delr c \thr a-1 \delr  d + \epsilon \delr a+1,
  \]
  which witnesses that $a-1,a,a+1$ all belong to the same
  $\th\o\del$-class, hence the join is trivial.
\item $c\thr a+1 \delr d +\epsilon$ for some $\epsilon=\pm 1$: We
  deduce
  \[
  d+\epsilon \delr a-1 \thr d-\epsilon \delr a+1 \thr d+\epsilon
  \]
  and obtain
  \[
  a\thr b \delr c \thr a+1 \delr  d + \epsilon \delr a-1,
  \]
  reaching triviality once again.
\end{enumerate}
The case where  $|P_1 - Q_1|=\tfrac{1}{2}$ is completely analogous.
\end{proof}
\begin{lemma}\label{lem:menor-k}
  Assume that the trajectory of $\th\o\del$ has two bounces at $x$ and
  $y$ on the same side of the diagram, such that:
  \begin{enumerate}
  \item $y$ neither lies at  a  corner nor is part of a rest, 
  \item $x$ is part of a rest, and 
  \item the distance between the bounces is less than $k$.
  \end{enumerate}
  Then the  join  $\th\o\del$  is trivial.
\end{lemma}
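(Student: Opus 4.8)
The plan is to read the hypotheses off the trajectory diagram and produce three consecutive elements of $\tira{L}$ related by $\th\o\del$, which then forces triviality via Lemma~\ref{lem:carac-trivial}. Set up notation: say both bounces lie on, for concreteness, the bottom side of the rectangle (the cases for the other three sides are symmetric). Since $x$ is part of a rest, the bounce at $x$ is a ``rest bounce'': the trajectory hits the bottom border, travels horizontally for one unit, and leaves again — so $x\thr x+1$ and the segments entering and leaving have slope $-1$ and $+1$ respectively, attached at abscissae $P_1(x)$ and $P_1(x)+1$ (or $P_1(x)$ and $P_1(x)-1$, depending on travel direction; fix one). Since $y$ is a genuine (non-rest, non-corner) bounce, the trajectory hits the bottom at $P_2(y)=0$ and reflects, so $y-1\delr y+1$ and $y$ itself is an extreme of $\th$. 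The hypothesis $|{P_1(x)}-P_1(y)|<k$ — or rather, that the horizontal distance between the two bounce points along the bottom edge is less than $k$ — is what will let a slope-$\pm1$ segment emanating from one bounce reach the vertical level of the other before escaping through a side.

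The key step is to follow the outgoing segment from the rest bounce at $x$ and the segment through the ordinary bounce at $y$ and check they meet, using condition~(3) to guarantee the meeting point is still inside the rectangle (height $k$). Concretely: the two relevant segments have slopes $+1$ and $-1$ (or the reflected pair), start at bottom-edge points whose abscissae differ by less than $k$, hence they intersect at a point of height less than $k/2<k$ — genuinely interior, or at worst a half-integral crossing near $x$. Reading that crossing back into $\tira{L}$ gives elements $c$ on the segment through $y$ and the two sides of the rest at $x$, yielding a chain
\[
x+1 \;\delr\; c \;\thr\; x,
\]
or a $\delr$/$\thr$ alternation of the shape used in Lemma~\ref{lem:dist-un-medio}, that connects $x-1$, $x$, $x+1$ (equivalently the two endpoints of the rest together with a neighbour) all within $\th\o\del$. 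Then Lemma~\ref{lem:carac-trivial} applies. In fact the cleanest route is probably to invoke Lemma~\ref{lem:dist-un-medio} directly: argue that the configuration forces either two crossings differing by $\tfrac12$, or a crossing differing by $\tfrac12$ from the center of the bounce at $x$, which is exactly that lemma's hypothesis.

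I expect the main obstacle to be the bookkeeping of \emph{directions and sides}: whether the incoming/outgoing segments at $x$ and at $y$ have matching or opposite slopes, and on which side of $x$ the rest segment extends, determines which of the ``$\pm$'' alternatives of $\conr{k}{r}$ is in play and hence exactly which three elements end up collinear in the join. There is also a mild subtlety that the distance being ``less than $k$'' must be converted into ``the crossing occurs strictly below height $k$'' (so the trajectory has not already bounced off the top or exited), and one should check the degenerate sub-case where the two bounces are adjacent on the edge, so the relevant segments meet at half-integral height $\tfrac12$ right next to the rest — this is precisely the bounce-versus-crossing clause of Lemma~\ref{lem:dist-un-medio}. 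Once the geometry is pinned down, the algebra is a one-line modular computation of the type already carried out in the proofs of Lemmas~\ref{lem:symmetric} and~\ref{lem:dist-un-medio}, together with an appeal to Theorem~\ref{th:charact-congruences} to know $k\mid n-\card{\br}$ and $k\mid r_i-i+1$.
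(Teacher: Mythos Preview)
Your high-level plan---produce three consecutive elements of $\tira{L}$ in the same $(\th\o\del)$-class and invoke Lemma~\ref{lem:carac-trivial}---is exactly what the paper does, but your execution has two real problems.

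First, a bookkeeping error that matters: bounces on the bottom side are points where $P_2=0$, i.e.\ elements of $\ext(\th)$, so the reflection at a non-rest bottom bounce $y$ gives $y-1\thr y+1$, not $y-1\delr y+1$. Likewise the horizontal step at the rest gives $x\thr x+1$. With both relations being $\th$, your displayed chain $x+1\delr c\thr x$ only re-derives $x\thr x+1$, which you already had; it does not link a \emph{third} consecutive element. So as written the argument does not close.

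Second, the suggested shortcut through Lemma~\ref{lem:dist-un-medio} is not justified. From one rest bounce and one plain bounce on the same side you get a single interior intersection of the two outgoing segments; you have not produced two crossings whose coordinates differ by~$\tfrac12$, nor a crossing at distance~$\tfrac12$ from a bounce center, except in the degenerate adjacent-bounce case. The paper does not use Lemma~\ref{lem:dist-un-medio} here at all. Instead it names the point $a$ on the segment through $y$ sitting directly above $x$ (this is where ``distance $<k$'' is used: $a$ lies inside the rectangle of height $k$), and then picks three consecutive points $b,b',b''$ on the segment through $x$ near the abscissa $P_1(y)$. Two independent $(\th,\del)$-chains from $a$---one via $b$ and auxiliary points $c,d$ on the bottom, the other via $x,y$---hit $b''$ and $b'$ respectively, while $a\thr b$ directly; hence $b,b',b''$ are three consecutive elements in one $(\th\o\del)$-class. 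The adjacent-bounce case is handled separately with a shorter chain. What your sketch is missing is precisely this choice of $a$ and the triple $b,b',b''$: without them you never escape the two-element block $\{x,x+1\}$.
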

\begin{proof}
  We consider first the case of bounces on a horizontal
  border. Without loss of generality, we may assume that $y$ is
  pictured to the left of $x$, and that $x$ is the part of the rest
  farthest from $y$ (i.e., $\{x,x'\}$ constitute a rest and $P_1(x) = P_1(x')+1$).

  Let  $a$  be the closest point to $y$ in the line having
  $P_1(a)=P_1(x)$. If the bounces are indeed adjacent (distance
  0), then $y$ and $a$ are consecutive elements of the line (see
  Figure~\ref{fig:adjacent-bounces}). Let $c$ be the other point on
  the line at   distance 1 from $y$; hence $P(c)=(P_1(y)-1,1)$. We have $y \thr x \delr a
  \thr c$, hence the consecutive points $c$, $y$, and $a$ are
  $(\th\o\del)$-related, hence $\th\o\del$  is trivial.
  \begin{figure}[h]
    \begin{center}
      \definecolor{ccqqcc}{rgb}{0.8,0,0.8}
      \definecolor{qqccqq}{rgb}{0,0.8,0}
      \definecolor{wwwwww}{rgb}{0.4,0.4,0.4}
      \definecolor{zzzzzz}{rgb}{0.6,0.6,0.6}
      \begin{tikzpicture}[line cap=round,line join=round,>=triangle 45,x=0.77cm,y=0.77cm]
        \draw [color=eqdfdf,dash pattern=on 1pt off 1pt,
          xstep=0.77cm,ystep=0.77cm] (9.32,5) grid (12.84,6.9);
        \clip(9.32,4.5) rectangle (12.84,6.9);
        \draw [line width=0.4pt] (8,5)-- (27,5);
        \draw (19,13)-- (11,5)-- (7,9)-- (8,13);
        \draw (3,13)--  (11,5)-- (12,5)-- (20,13);
        \fill [color=black] (12,5) circle (1.5pt);
        \draw[color=black] (12,4.7) node {$x$};
        \fill [color=black] (11,5) circle (1.5pt);
        \draw[color=black] (11,4.7) node {$y$};
        \fill [color=black] (10,6) circle (1.5pt);
        \draw[color=black] (9.79,5.74) node {$c$};
        \fill [color=black] (12,6) circle (1.5pt);
        \draw[color=black] (12.34,5.81) node {$a$};
      \end{tikzpicture}
    \end{center}
    \caption{Adjacent bounces.} \label{fig:adjacent-bounces}
  \end{figure}
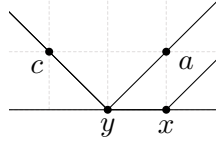
  
  If the distance is positive, we may assume without loss of
  generality that the situation at hand  is the one pictured in
  Figure~\ref{fig:closer-k}. The points $b,b',b''$ are the closest
  points   to $x$ such that $P(b)=(P_1(y)-1,P_2(a))$,
  $P(b')=(P_1(y),P_2(a)-1)$, and  $P(b'')=(P_1(y)+1,P_2(a)-2)$. These
  are well defined since $P(y)$ is not at a corner and $P_2(a)\geq 2$.
  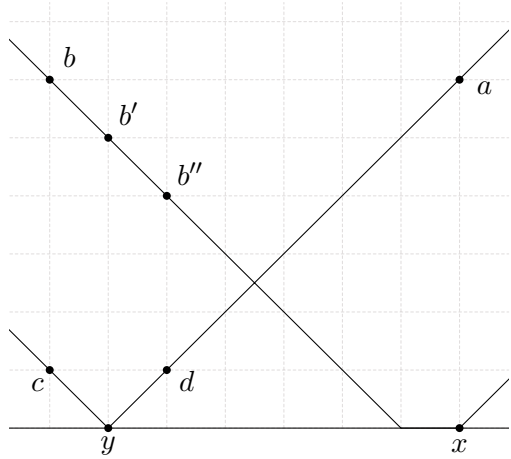
\begin{figure}
    \begin{center}
      \definecolor{ccqqcc}{rgb}{0.8,0,0.8}
      \definecolor{qqccqq}{rgb}{0,0.8,0}
      \definecolor{wwwwww}{rgb}{0.4,0.4,0.4}
      \definecolor{zzzzzz}{rgb}{0.6,0.6,0.6}
      \begin{tikzpicture}[line cap=round,line join=round,>=triangle 45,x=0.77cm,y=0.77cm]
        \draw [color=eqdfdf,dash pattern=on 1pt off 1pt, xstep=0.77cm,ystep=0.77cm] (9.32,5) grid (17.84,12.33);
        \clip(9.32,4.5) rectangle (17.84,11.9);
        \draw [line width=0.4pt] (8,5)-- (27,5);
        \draw (20,13)-- (18.99,13)-- (11,5)-- (7,9)-- (8,13)-- (8,13)-- (16,5);
        \draw  (16,5)-- (17,5)-- (25,13);
        \fill [color=black] (17,11) circle (1.5pt);
        \draw[color=black] (17.43,10.86) node {$a$};
        \fill [color=black] (17,5) circle (1.5pt);
        \draw[color=black] (17,4.7) node {$x$};
        \fill [color=black] (11,5) circle (1.5pt);
        \draw[color=black] (11,4.7) node {$y$};
        \fill [color=black] (11,10) circle (1.5pt);
        \draw[color=black] (11.35,10.42) node {$b'$};
        \fill [color=black] (10,11) circle (1.5pt);
        \draw[color=black] (10.34,11.39) node {$b$};
        \fill [color=black] (10,6) circle (1.5pt);
        \draw[color=black] (9.79,5.74) node {$c$};
        \fill [color=black] (12,6) circle (1.5pt);
        \draw[color=black] (12.34,5.81) node {$d$};
        \fill [color=black] (12,9) circle (1.5pt);
        \draw[color=black] (12.4,9.38) node {$b''$};
      \end{tikzpicture}
    \end{center}
    \caption{Different bounces closer than $k$ imply triviality.} \label{fig:closer-k}
  \end{figure}
  We obtain 
  \[a\thr b \delr c \thr d \delr b'' \quad\text{and}\quad
  a\delr x \thr y \delr b',\]
  hence the join is trivial.   All of the
  reasoning took place inside the rectangle having vertices $a$,
  $b$ and $x$, having  height $\overline{ax}\leq k$.

  For the case of the vertical border, one should only be careful with
  the case of the distance being exactly $k-1$. But then the bounce
  at $y$ lies on a corner, and the result holds vacuously.
\end{proof}
\begin{corollary}\label{cor:vert-iguales}
  If the join $\th\o\del$ is nontrivial, the bounces on the interior
  of each vertical border of its trajectory diagram are of the same type.
\end{corollary}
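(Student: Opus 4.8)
The plan is to derive this as essentially a one‑line corollary of Lemma~\ref{lem:menor-k}, the crucial observation being that a vertical border of the rectangle has length $k$, so that \emph{any} two bounces lying on it are automatically at distance less than $k$ and hypothesis~(3) of Lemma~\ref{lem:menor-k} is satisfied for free.

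First I would fix the dictionary between the diagram and the congruences. Under the standing assumption $\ext(\th)\cap\ext(\del)=\{0,n\}$, a point $P(x)$ lies on a vertical border exactly when $\min x/\del\in\{0,l\}$, that is, when $x$ lies in one of the two endpoint blocks of $\del$; the bounces on that border occur at the elements of $\ext(\del)$ in that block, and such a bounce at $x$ is a rest bounce iff $x$ is part of a rest of $\del$. Next, an interior bounce on a vertical border (one not sitting at a corner of the rectangle) cannot have ordinate $\min x/\th$ equal to $0$ or to $k$: this would give $x\in\ext(\th)$, and together with $x\in\ext(\del)$ it would force $x\in\ext(\th)\cap\ext(\del)=\{0,n\}$, contradicting that the bounce is interior. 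The same applies to both line elements forming a rest bounce; in particular $\th$ has no rest at either of them, so a rest bounce on a vertical border is a vertical segment of length one whose midpoint has height strictly between $0$ and $k$. Thus every interior bounce on a vertical border sits at a height in the open interval $(0,k)$.

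Now suppose, for a contradiction, that some vertical border carries two interior bounces of different type, a rest bounce at $x$ and a non‑rest bounce at $y$. These lie on the same side; $y$ is neither at a corner nor part of a rest, and $x$ is part of a rest, so conditions~(1) and~(2) of Lemma~\ref{lem:menor-k} hold. By the previous step both bounces have height in $(0,k)$, hence the distance between them along that border is smaller than $k$, which is condition~(3); note that the borderline value $k-1$ flagged in the proof of Lemma~\ref{lem:menor-k} for the vertical case cannot occur here, since it would put $y$ at a corner. Lemma~\ref{lem:menor-k} then forces $\th\o\del$ to be trivial, contrary to hypothesis. Hence all interior bounces on each vertical border are of the same type. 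I do not expect a genuine obstacle here; the only delicate point is to match the convention of Section~\ref{sec:trajectories} for measuring the ``distance between bounces'' when one of them is a rest bounce, but in the vertical case every height involved lies strictly inside $[0,k]$, so the resulting distance is below $k$ regardless of that choice.
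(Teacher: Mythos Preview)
Your proposal is correct and follows exactly the paper's intended approach: the paper states this as an immediate corollary of Lemma~\ref{lem:menor-k} (with no separate proof), relying on precisely the observation you make---that a vertical border has length $k$, so any two interior bounces on it are automatically at distance less than $k$, and the edge case of distance $k-1$ is already ruled out in the vertical case by the final paragraph of the proof of Lemma~\ref{lem:menor-k}. Your detour through $\ext(\theta)\cap\ext(\delta)=\{0,n\}$ to conclude that interior vertical bounces have ordinate in $(0,k)$ is a bit more than needed (ordinate $0$ or $k$ would place the point at a corner by definition), but it is not wrong.
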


We work under the hypothesis that
$\th\o\del\neq L\times L$ up to the end of the present section.
\begin{lemma}\label{lem:autocruces}
  Assume that the trajectory of $\th\o\del$ bounces at $y$ on the interior of
  the left side of the diagram, and let $x$ and $z$ be the bounces
  next to $y$ on  the top side and bottom side, respectively. Then the
  bounce
  at $y$ is of the same type as the one at $z$.
\end{lemma}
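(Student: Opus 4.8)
The statement to prove is Lemma~\ref{lem:autocruces}: if the trajectory of $\th\o\del$ bounces at $y$ on the interior of the left side of the diagram, and $x$, $z$ are the neighbouring bounces on the top and bottom sides respectively, then the bounce at $y$ has the same type (rest / no rest) as the one at $z$. The plan is to argue by contradiction: suppose the bounce at $y$ and the bounce at $z$ are of opposite types, and produce three consecutive elements of $\tira{L}$ that are all $(\th\o\del)$-related, so that the join is trivial by Lemma~\ref{lem:carac-trivial}, contradicting the standing hypothesis $\th\o\del\neq L\times L$.

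First I would set up coordinates. Since $y$ is on the interior of the left vertical border, $P_1(y)=0$, i.e.\ $y$ is an extreme of $\del$. The segment of the trajectory entering $y$ and the segment leaving $y$ both have slope $\pm 1$; one of them rises towards $x$ on the top border and the other descends towards $z$ on the bottom border. If the bounce at $y$ is an ordinary (non-rest) bounce, the incoming and outgoing segments meet at $P(y)$ and $y$ is not part of a rest of $\del$; if it is a rest bounce, there is a horizontal half-step $\{y,y'\}$ with $P_1(y)=P_1(y')+1=1$ wait — $P_1(y)=0$, so actually the rest pair straddles the border with $P_1$-values $0$ and, after the bounce, the segment resumes from the same abscissa. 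The key combinatorial fact I would extract (using Lemma~\ref{lem:menor-k} and Corollary~\ref{cor:vert-iguales}, or rather arguing directly as in Lemma~\ref{lem:menor-k}) is: following the trajectory from $z$ up to $y$ and then down to $x$, the two "arms" are reflections of one another across the horizontal line through $P(y)$ up to exactly the point where the rest structure first disagrees. So if the bounce types at $y$ and $z$ differ, tracing from $z$ the arm reaches the left border and bounces while the reflected arm (continuing past $y$ towards $x$) is shifted by the half-unit coming from the asymmetric rest.

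Concretely, I would do the following. Let $a$ be the element of $\tira{L}$ adjacent to $y$ on the arm going towards $x$ (so $P(a)=(1,P_2(y)\pm 1)$), and let $c$ be the element adjacent to $y$ on the arm going towards $z$. The asymmetry between the bounce at $y$ and the bounce at $z$ forces a relation of the form $c\,(\th\o\del)\,y$ and $a\,(\th\o\del)\,y$ — i.e.\ the reflected images line up — while simultaneously the rest at exactly one of $y,z$ gives a horizontal $\del$- or $\th$-step that glues in a third element adjacent to $y$, producing $c\,(\th\o\del)\,y\,(\th\o\del)\,a$ together with $y$ related to its other line-neighbour. Collecting three consecutive line elements around $y$ that are pairwise $(\th\o\del)$-related then finishes it via Lemma~\ref{lem:carac-trivial}. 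This is entirely parallel to the chains displayed in the proofs of Lemma~\ref{lem:dist-un-medio} and Lemma~\ref{lem:menor-k}, and indeed one can probably invoke Lemma~\ref{lem:dist-un-medio} directly: the clash between a rest bounce at one of $\{y,z\}$ and an ordinary bounce at the other creates, after following the trajectory one full "half-period", a crossing whose $y$-coordinate differs by $\tfrac12$ from the bounce at $y$ (because the rest contributes a half-unit horizontal offset that the reflection translates into a vertical half-offset), which is precisely the situation ruled out by Lemma~\ref{lem:dist-un-medio}.

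\textbf{Main obstacle.} The delicate part is the bookkeeping of the $\tfrac12$'s: one must verify that an asymmetric rest really does shift the reflected arm by exactly half a unit in the relevant coordinate, and that the bounce at $x$ being on the top border (rather than, say, the arm falling off the line at a corner) does not intervene before the contradiction is reached — i.e.\ that $x$ and $z$ are genuinely the \emph{first} bounces encountered on the top and bottom sides, so the reflection argument has "room" to run. Handling the corner cases (when $x$ or $z$ coincides with $0$ or $n$, or lies exactly at a corner of the rectangle) cleanly, as was done at the end of the proof of Lemma~\ref{lem:menor-k}, is where the care is needed; the arithmetic itself is a routine modular computation of the kind already carried out several times above.
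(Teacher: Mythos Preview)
Your proposal has the right instinct in its second half --- the contradiction does ultimately come from Lemma~\ref{lem:dist-un-medio} via a half-unit offset --- but neither of your two concrete attempts gets there.

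The first approach is confused. You set $a$ and $c$ to be the two elements of $\tira{L}$ adjacent to $y$, then propose exhibiting ``$c\,(\th\o\del)\,y\,(\th\o\del)\,a$ together with $y$ related to its other line-neighbour''. But $\{a,c\}=\{y-1,y+1\}$ already exhausts the line-neighbours of $y$, and of course $y-1$, $y$, $y+1$ are trivially $R$-related, which says nothing. To trigger Lemma~\ref{lem:carac-trivial} you need three \emph{consecutive} line elements in a single $(\th\o\del)$-class, and nothing you wrote forces $y-1\mathrel{(\th\o\del)}y+1$ from the mismatch at $y$ and $z$ alone.

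The second approach, appealing to Lemma~\ref{lem:dist-un-medio}, is on track but you place the half-offset in the wrong location (``a crossing whose $y$-coordinate differs by $\tfrac12$ from the bounce at $y$'') and give no construction. The paper's argument works at a different spot: it compares the trajectory \emph{after it has passed through $x$, $y$, $z$ and returned to abscissa $k$} with the initial diagonal segment from $0$ to $k$. Concretely, one first uses Lemma~\ref{lem:menor-k} to note that the bounce at $x$ has the same type as the bounce at $k$ (they are within distance $k$ on the top border). Then, letting $w$ be the last element before $x$ with $P_1(w)=k$, one checks what element $w'$ of the trajectory beyond $z$ first attains $P_1(w')=k$: if the bounces at $y$ and $z$ agree, $w'\in\{w+2k,w+2k+1\}$ lands exactly at $P(w)$ (a crossing right under the bounce at $k$); if they disagree, the single unmatched rest shifts $w'$ by one in exactly one of $\th$, $\del$, producing a crossing at distance $\tfrac12$ from the bounce at $k$ in the abscissa --- and now Lemma~\ref{lem:dist-un-medio} applies. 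The missing ingredient in your write-up is this auxiliary point $w$ and the preliminary step tying the type of $x$ to that of $k$; without them there is no concrete pair (crossing, bounce) to feed into Lemma~\ref{lem:dist-un-medio}.
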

\begin{proof}
  Assume without loss of generality that $x<y<z$. By Lemma~\ref{lem:menor-k}, we 
  have that $x$ is part of a rest if and only if  $k$ is a rest. Let
  $w$ the last element of the line   before $x$ such that
  $P_1(w)=k$. We may proceed to perform a case analysis. 
  
  Assume  $k$ is not a rest (see the left diagram in 
  Figure~\ref{fig:autocruces}). If both $y$ and $z$ are (not) parts of a rest, it can
  be shown that the element $w+2k+1$ ($w+2k$) is related to $w$ by
  both congruences. Hence they are pictured in the same point $P(w)$
  and correspond to a crossing of the trajectory, right under $P(k)$. 

  Now if $y$ is part of a rest and $z$ is not (upper thin dashed line
  in Figure~\ref{fig:autocruces}), $w\thr w+2k$ and
  $w\delr w+2k+1$; this can be seen analytically noticing that
  $\Delta_\br(w)=\Delta_\br(w+2k)$ and 
  $\Delta_\bs(w)+1=\Delta_\bs(w+2k+1)$. This implies that there is a
  crossing at coordinates
  $P(w)+(-\frac{1}{2},\frac{1}{2})=(k-\frac{1}{2},P_2(w)+\frac{1}{2})$,
  which differs from the bounce at $k$ by $\frac{1}{2}$ in the
  abscissa. Hence the join is trivial by
  Lemma~\ref{lem:dist-un-medio}. 
  
  The remaining cases follow analogously.  
  
  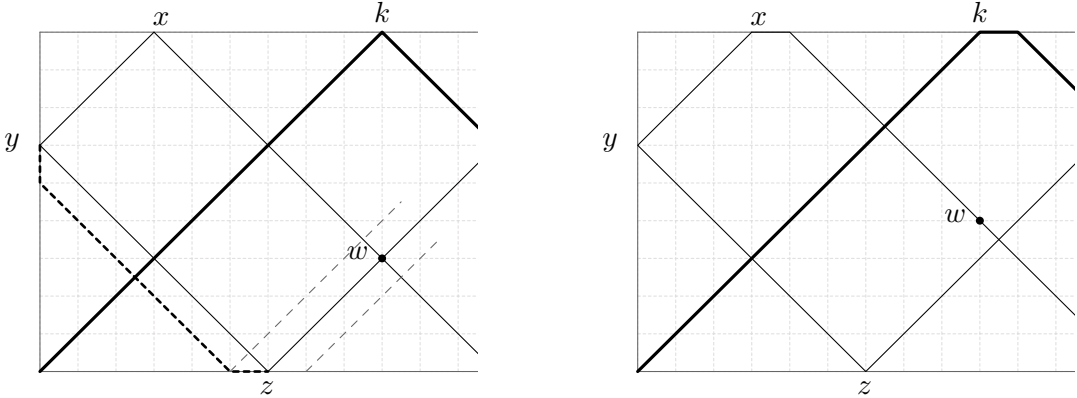
\begin{figure}[h]
    \begin{center}
      \definecolor{wwwwww}{rgb}{0.4,0.4,0.4}
      \definecolor{eqdfdf}{rgb}{0.88,0.87,0.87}
      \begin{tabular}{ccc}
        \begin{tikzpicture}[line cap=round,line join=round,>=triangle 45,x=0.5cm,y=0.5cm]
          \clip(7,4) rectangle (19.5,15);
          \draw [color=eqdfdf,dash pattern=on 1pt off 1pt,
            xstep=0.5cm,ystep=0.5cm] (8,5) grid (19.5,14);
          \draw [line width=0.4pt,color=wwwwww] (8,14)-- (8,5);
          \draw [line width=0.4pt,color=wwwwww] (8,5)-- (21,5);
          \draw [line width=0.4pt,color=wwwwww] (22,5)-- (27,5);
          \draw [line width=0.4pt,color=wwwwww] (27,14)-- (8,14);
          \draw [line width=1.2pt] (8,5)-- (17,14)-- (26,5);
          \draw (11,14)-- (8,11)-- (14,5)-- (23,14);
          \draw (11,14)-- (20,5);
          \draw [line width=1pt,dash
            pattern=on 2pt off 2pt] (8,11)-- (8,10)-- (13,5)-- (14,5);
          \draw [dash pattern=on 3pt off 3pt,color=wwwwww] (13,5)-- (17.5,9.5);
          \draw [dash pattern=on 3pt off 3pt,color=wwwwww] (15,5)-- (18.5,8.5);
          \draw (6.8,11.58) node[anchor=north west] {$y$};
          \fill [color=black] (17,8) circle (1.5pt);
          \draw (15.8,8.58) node[anchor=north west] {$w$};
          \draw (13.5,5) node[anchor=north west] {$z$};
          \draw (10.7,14.8) node[anchor=north west] {$x$};
          \draw (16.52,15.07) node[anchor=north west] {$k$};
        \end{tikzpicture}
        & 
        \hspace{2em}
        & 
        \begin{tikzpicture}[line cap=round,line join=round,>=triangle 45,x=0.5cm,y=0.5cm]
          \clip(7,4) rectangle (19.5,15);
          \draw [color=eqdfdf,dash pattern=on 1pt off 1pt,
            xstep=0.5cm,ystep=0.5cm] (8,5) grid (19.5,14);
          \draw [line width=0.4pt,color=wwwwww] (8,14)-- (8,5);
          \draw [line width=0.4pt,color=wwwwww] (8,5)-- (21,5);
          \draw [line width=0.4pt,color=wwwwww] (22,5)-- (27,5);
          \draw [line width=0.4pt,color=wwwwww] (27,14)-- (8,14);
          \draw [line width=1.2pt] (8,5)-- (17,14)-- (18,14)-- (27,5);
          \draw (11,14)-- (8,11)-- (14,5)-- (23,14);
          \draw (11,14)-- (12,14)-- (21,5);
          \draw (6.8,11.58) node[anchor=north west] {$y$};
          \fill [color=black] (17,9) circle (1.5pt);
          \draw (15.8,9.58) node[anchor=north west] {$w$};
          \draw (13.5,5) node[anchor=north west] {$z$};
          \draw (10.7,14.8) node[anchor=north west] {$x$};
          \draw (16.52,15.07) node[anchor=north west] {$k$};
        \end{tikzpicture}
      \end{tabular}
    \end{center}
    \caption{Correlation of bounces near the origin.}\label{fig:autocruces}
  \end{figure}
\end{proof}

%
%
%

%
%
%
%
%
%
%
%
%
%
%
%
%
%
%

%
%
%
%
%
%
%
%
%
%
%
%
%

%
\subsection{Bounces in Trajectory Diagrams}
We will now prove that under the assumption of $\alt_\th,\alt_\del\geq 3$, bounces
in each side of the trajectory diagram must be of the same type,
extending the previous lemmas.

By Corollary~\ref{cor:vert-iguales} we
only have to consider 4 cases; they are as follows:
\begin{enumerate}
\item $k$ is a not a rest of $\th$.
  \begin{enumerate}[a.]
  \item \label{item:1a}No rests on the left border.
  \item \label{item:1b}There are rests on the left border.
  \end{enumerate}
\item $k$ is a rest of $\th$. 
  \begin{enumerate}[a.]
  \item \label{item:2a}No rests on the left border.
  \item \label{item:2b}There are rests on the left border.
  \end{enumerate}
\end{enumerate}
\begin{lemma}\label{lem:alt-3-rebote}
If $\ext(\th)\cap \ext(\del) = \{0,n\}$ and $\alt_\del\geq 3$, there
exists $x\in 0/\del$ such that $x\notin\ext(\th)$.
\end{lemma}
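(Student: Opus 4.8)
The plan is to exhibit an explicit element of $0/\del$ lying strictly between $0$ and $n$, and then read off the conclusion from the hypothesis. The key observation is that every member of $0/\del$ automatically lies in $\ext(\del)$: indeed $0$ is always an extreme of $\del$, so $0/\del$ is one of the two endpoint classes of $\tira{L}_n/\del$, and $\ext(\del)=0/\del\cup l/\del$. Consequently, as soon as we produce an $x\in 0/\del$ with $0<x<n$, we get $x\in\ext(\del)\setminus\{0,n\}$, and then the assumption $\ext(\th)\cap\ext(\del)=\{0,n\}$ forces $x\notin\ext(\th)$, which is exactly what the lemma asks for.

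So the real work is to find such an $x$, and for this I would unfold $\del={\conr{l}{s}}$. Since $\bs\subseteq[l,n-l]$ by Theorem~\ref{th:charact-congruences}, we have $\displ{s}(0)=0$ and $\displ{s}(n)=\card{\bs}$; in particular, by the definition of $\conr{l}{s}$, the relation $x\delr 0$ holds iff $x-\displ{s}(x)\equiv_{2l}0$. Put $g(x)\doteq x-\displ{s}(x)$ for $x\in[0,n]$. Then $g(0)=0$ and $g(n)=n-\card{\bs}=\alt_\del\cdot l$ by the definition of frequency, while $g(x+1)-g(x)=1-\bigl(\displ{s}(x+1)-\displ{s}(x)\bigr)\in\{0,1\}$. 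Being non-decreasing with increments $0$ or $1$, $g$ attains every integer value between $0$ and $\alt_\del l$.

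It is here that the hypothesis $\alt_\del\geq 3$ enters: it yields $0<2l<\alt_\del l=g(n)$, so there is $x\in[0,n]$ with $g(x)=2l$. Then $x>0$ because $g(0)=0\neq 2l$, and $x<n$ because $g$ is non-decreasing with $g(x)<g(n)$; finally $2l\mid g(x)$ gives $x\delr 0$, i.e.\ $x\in 0/\del$. By the first paragraph this $x$ is the element we want.

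The only step calling for a little care is the claim that $g$ takes the value $2l$, which is precisely where the strict inequality $2l<\alt_\del l$ — hence the assumption $\alt_\del\geq 3$ rather than merely $\alt_\del\geq 2$ — is essential; note that the statement really fails for $\alt_\del=2$, since then $0/\del=\{0,n\}$, which is contained in $\ext(\th)$. Everything else is direct bookkeeping with the definitions of $\conr{l}{s}$, $\displ{s}$, frequency, and $\ext$, so I do not anticipate any further obstacle.
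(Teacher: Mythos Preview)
Your proof is correct and follows essentially the same strategy as the paper's: both arguments observe that any element of $0/\del$ lying strictly between $0$ and $n$ belongs to $\ext(\del)\setminus\{0,n\}$ and is therefore excluded from $\ext(\th)$ by hypothesis, and then produce such an element. The only difference is in how that element is exhibited---the paper simply takes $x=\min\bigl(0/\del\setminus\{0\}\bigr)$ and notes that $x=n$ would force $\alt_\del=2$, whereas you construct $x$ more explicitly as a preimage of $2l$ under the nondecreasing surjection $g(y)=y-\displ{s}(y)$.
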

\begin{proof}
Let $x\doteq \min (0/\del \setminus \{0\})$. Note that $x\neq n$,
otherwise we would have $\alt_\del=2$. By definition, $x\in
\ext(\del)$. If $x\in\ext(\th)$, we would have that $x\in\ext(\th)\cap
\ext(\del) = \{0,n\}$, an absurdity.
\end{proof}
\begin{lemma}\label{lem:bounces}
  If $\alt_\th,\alt_\del\geq 3$, bounces
  in each side of the trajectory diagram  of $\th\o\del$ must be of the
  same type.  
\end{lemma}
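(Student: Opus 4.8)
The plan is to settle the vertical borders at once — they are handled by Corollary~\ref{cor:vert-iguales} — and then to pin down the type of the bounces along each horizontal border by following the trajectory, starting from a configuration near the origin that we control. First I would record the role of the frequency hypotheses: by Lemma~\ref{lem:alt-3-rebote} (whose hypothesis $\alt_\del\geq 3$ is in force) there is $x\in 0/\del$ with $x\notin\ext(\th)$; since then $0<x<n$, $P_1(x)=0$ and $0<P_2(x)<k$, this $x$ is an interior bounce on the left border, and by the same argument with $\th$ and $\del$ interchanged there is an interior bounce on the bottom border. So both the left vertical border and the horizontal borders actually carry interior bounces, and there is something to propagate from.

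For the propagation I would lean on Lemma~\ref{lem:autocruces}, which ties an interior left-border bounce to its neighbouring bounce on the bottom border, together with Corollary~\ref{cor:vert-iguales} and the geometry of the trajectory; walking along the trajectory one wants the type to be preserved as it moves from any bounce on a horizontal border to the next one on that border (passing through the intervening bounces on the vertical borders, each itself pinned down by Corollary~\ref{cor:vert-iguales}). This yields that all bounces on the bottom border share a type, and likewise all on the top border.

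The real work — and the reason for the four-case split — is anchoring this at the corners, where the argument could break down: one must check that the first interior bounce on each horizontal border (in particular the bounce at $k$ on the top) carries the type forced by the behaviour at the origin. In each of the four configurations ($k$ a rest of $\th$ or not, crossed with rests present on the left border or not) I would argue in the style of the proofs of Lemmas~\ref{lem:autocruces} and~\ref{lem:menor-k}: suppose the first bounce has the wrong type; trace a short initial arc of the trajectory near $P(0)$; and compute the relevant values of $\displ{r}$ and $\displ{s}$ to exhibit either three consecutive elements of $\tira{L}$ related by $\th\o\del$ — so $\th\o\del=L\times L$ by Lemma~\ref{lem:carac-trivial} — or two crossings, or a crossing and a bounce, with one coordinate differing by $\tfrac12$ — so $\th\o\del=L\times L$ by Lemma~\ref{lem:dist-un-medio} — contradicting $\th\o\del\neq L\times L$. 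I expect most of the effort to fall in cases~1b and~2b, where rests on the left border make the $\Delta$-bookkeeping as the trajectory wraps past the corner least transparent; the two cases with no left-border rests should reduce quickly to the situation already handled in Lemma~\ref{lem:autocruces}.
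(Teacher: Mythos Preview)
Your outline correctly identifies the ingredients --- Corollary~\ref{cor:vert-iguales} for the vertical borders, Lemma~\ref{lem:alt-3-rebote} for the existence of interior bounces, and Lemmas~\ref{lem:menor-k}, \ref{lem:autocruces}, \ref{lem:dist-un-medio} as the tools --- but the ``propagation'' step as you describe it is a genuine gap. You write that ``walking along the trajectory one wants the type to be preserved as it moves from any bounce on a horizontal border to the next one on that border,'' but none of the available lemmas give this. Following the trajectory from one bottom bounce to the next one carries you through a top bounce (and perhaps vertical bounces), and nothing relates the types at the two bottom endpoints; Lemma~\ref{lem:menor-k} compares bounces on the same side that are close \emph{in abscissa}, not consecutive along $\tira{L}$, and two bottom bounces consecutive along the trajectory can easily be at diagram-distance $\geq k$.

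The paper's argument is organised differently and this is where the real content lies. One orders the bounces on the top border by abscissa and records the successive gaps $d_0,d_1,\dots,d_\alpha$ between those lying to the left of $k$; since each $d_i<k$, Lemma~\ref{lem:menor-k} forces all of these to share a type. Reflecting through the left border shows the same gaps reappear on the left side and then on the bottom, and Lemma~\ref{lem:autocruces} pins the bottom types. The delicate step --- which you have pushed into the ``corner anchoring'' but which in fact recurs all along the border --- is that past abscissa $k$ the next gap may be $2d_0$ or $2d_0+1$, which can exceed $k$, so Lemma~\ref{lem:menor-k} no longer applies; here one must exhibit a pair of crossings (or a crossing and a bounce) whose ordinates differ by $\tfrac{1}{2}$ and invoke Lemma~\ref{lem:dist-un-medio} to rule out the wrong type. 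This forces the gap pattern $d_0,d_1,\dots,d_\alpha$ to repeat periodically along each horizontal border, and the same $\tfrac{1}{2}$-crossing obstruction must be re-checked at each period. The four-case split governs the shape of this repeating block, not merely the behaviour at the first corner; your expectation that the no-rest left-border cases ``reduce quickly to Lemma~\ref{lem:autocruces}'' underestimates what has to be done there.
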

\begin{proof}
  We will work on  case \ref{item:2a}, i.e., $k$ is a rest of $\th$
  and there are no rests on the left border. The reader will easily
  note that the arguments for the other cases are completely similar.

  By Lemma~\ref{lem:alt-3-rebote}, there must be at least one bounce
  on the interior of the left side. Every such bounce determines unique bounces on the
  upper and lower side with abscissas smaller than $k$.
  Enumerate the distances between the upper bounces to the left of $k$ as
  $d_1,\dots,d_\al$ (hence $\al\geq 1$), and let $d_0$ be the distance
  from the left border to the leftmost bounce. The first $\al+1$ upper bounces
  are of the same type by Lemma~\ref{lem:menor-k}. 

  Note that the distances on the left and lower sides of the rectangle
  are as labeled since every segment in its  interior has slope $\pm
  1$. Hence, by
  Lemma~\ref{lem:autocruces}, the first $\al$  
  lower bounces are not rests. If there are no further upper bounces,
  we are done; otherwise, the next upper bounce (immediately to the
  right of $k$) must also be a rest, since $d_\al\leq k-2$.
  
  The bounce labeled as $x$ cannot be a rest (see
  Figure~\ref{fig:trajectory_diagram}). If it were, the distance 
  $h$ would equal $d_1$ and then the heights of crossings at $y$
  and $z$ would differ in $\tfrac{1}{2}$. This contradicts
  non-triviality of the join by Lemma~\ref{lem:dist-un-medio}. Hence
  $h=d_1+1$; moreover, since all segments of the trajectory in the
  interior of the rectangle have slope $\pm 1$, this allows us to
  conclude that the distances between the subsequent lower bounces
  repeat the pattern of the first $\alpha$.

  A similar argument enables us to conclude that the bounce at
  $x'$ in Figure~\ref{fig:trajectory_diagram} \emph{must be} a rest;
  otherwise the distance $h'$ would equal $h=d_1+1$ and then the
  crossing  labeled as $y'$ would be $\frac{d_1+1}{2}$ below the upper
  border of the diagram, but the crossing   at $z'$ is $\frac{1}{2}$
  above that, contradicting nontriviality by
  Lemma~\ref{lem:dist-un-medio}.
  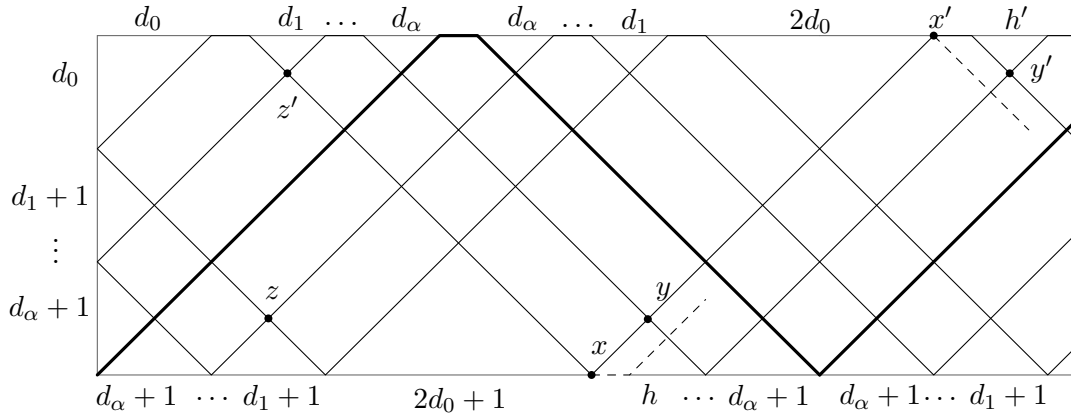
\begin{figure}
    \begin{center}
      \definecolor{wwwwww}{rgb}{0.4,0.4,0.4}
      \definecolor{eqdfdf}{rgb}{0.88,0.87,0.87}
      \begin{tikzpicture}[line cap=round,line join=round,>=triangle 45,x=0.5cm,y=0.5cm]
        \clip(5.7,4) rectangle (33.61,15);
        \draw [line width=0.4pt,color=wwwwww] (8,14)-- (8,5);
        \draw [line width=0.4pt,color=wwwwww] (8,5)-- (21,5);
        \draw [line width=0.4pt,color=wwwwww] (22,5)-- (27,5);
        \draw [line width=0.4pt,color=wwwwww] (27,14)-- (8,14);
        \draw [line width=1.2pt] (8,5)-- (17,14)-- (18,14)-- (27,5);
        \draw (11,14)-- (8,11)-- (14,5)-- (23,14);
        \draw (11,14)-- (12,14)-- (21,5);
        \draw (27,8)-- (21,14)-- (20,14)-- (11,5)-- (8,8)-- (14,14)-- (15,14)-- (24,5);
        \draw (21,5)-- (27,11);
        \draw (24,5)-- (27,8);
        \draw (8.7,15.07) node[anchor=north west] {$d_0$};
        \draw (12.48,15.07) node[anchor=north west] {$d_1$};
        \draw (15.47,15.04) node[anchor=north west] {$d_\alpha$};
        \draw (6.53,13.58) node[anchor=north west] {$d_0$};
        \draw (5.46,10.3) node[anchor=north west] {$d_1+1$};
        \draw (6.53,9.3) node[anchor=north west] {$\vdots$};
        \draw (5.4,7.38) node[anchor=north west] {$d_\alpha+1$};
        \draw (7.69,5) node[anchor=north west] {$d_\alpha+1$};
        \draw (11.56,5) node[anchor=north west] {$d_1+1$};
        \draw (16.05,4.94) node[anchor=north west] {$2d_0+1$};
        \draw (22.03,5.03) node[anchor=north west] {$h$};
        \draw (23.1,4.69) node[anchor=north west] {$\dots$};
        \draw (13.7,14.64) node[anchor=north west] {$\dots$};
        \draw (10.31,4.69) node[anchor=north west] {$\dots$};
        \draw (18.52,15.07) node[anchor=north west] {$d_\alpha$};
        \draw (21.51,14.98) node[anchor=north west] {$d_1$};
        \draw [line width=0.4pt,color=wwwwww] (27,14)-- (38,14);
        \draw [line width=0.4pt,color=wwwwww] (27,5)-- (38,5);
        \draw (27,8)-- (33,14)-- (34,14)-- (38,10);
        \draw (27,11)-- (30,14)-- (31,14)-- (38,7);
        \draw (23,14)-- (24,14)-- (33,5)-- (38,10);
        \draw [line width=1.2pt] (27,5)-- (36,14)-- (37,14)-- (38,13);
        \draw (27,8)-- (30,5)-- (38,13);
        \draw (27.25,5.03) node[anchor=north west] {$d_\alpha+1$};
        \draw (19.93,14.58) node[anchor=north west] {$\dots$};
        \draw (29.45,4.69) node[anchor=north west] {$\dots$};
        \draw (30.67,5.03) node[anchor=north west] {$d_1+1$};
        \draw (25.94,14.98) node[anchor=north west] {$2d_0$};
        \draw (31.59,15.04) node[anchor=north west] {$h'$};
        \draw [dash pattern=on 3pt off 3pt] (30,14)-- (32.5,11.5);
        \draw [dash pattern=on 3pt off 3pt] (21,5)-- (22,5)-- (24,7);
        \draw (24.32,5) node[anchor=north west] {$d_\alpha+1$};
          \fill [color=black] (21,5) circle (1.5pt);
          \draw[color=black] (21.21,5.73) node {$x$};
          \fill [color=black] (22.48,6.48) circle (1.5pt);
          \draw[color=black] (22.89,7.2) node {$ y$};
          \fill [color=black] (12.5,6.5) circle (1.5pt);
          \draw[color=black] (12.6,7.17) node {$ z$};
          \fill [color=black] (13,13) circle (1.5pt);
          \draw[color=black] (13,12.11) node {$z'$};
          \fill [color=black] (30,14) circle (1.5pt);
          \draw[color=black] (30.18,14.49) node {$x'$};
          \fill [color=black] (32,13) circle (1.5pt);
          \draw[color=black] (32.84,13.12) node {$y'$};
      \end{tikzpicture}
    \end{center}
    \caption{Trajectory diagram for case \ref{item:2a}.}\label{fig:trajectory_diagram}
  \end{figure}
  
  Observe that by the previous reasoning we can conclude that all the
  distances between two bounces in the diagram are either less than $k$
  and hence they are of the same type by Lemma~\ref{lem:menor-k};  or
  else they are equal to $2d_0+1$ (for lower bounces) or $2d_0$ (for
  upper bounces), and in this case we are in a position where
  Lemma~\ref{lem:dist-un-medio} can be applied. Therefore, by
  repeating the previous arguments we obtain the result.
\end{proof}

The previous argument can be made more formal (although painstakingly more
cumbersome) by considering an inductive argument, which we proceed to
sketch. It can be seen that in the diagram for a nontrivial
$\th\o\del$, if there is (not) a rest at $k$, every time
the trajectory crosses from right to left the vertical segment at abscissa
$k +\frac{1}{2}$ ($k$)  and bounces in the left border, it crosses
this segment in the same point 
in its way back. Hence if we disregard all the part of the diagram
with abscissas less than $k+1$ (less than $k$) if there is (not) a rest at
$k$, what remains is also a trajectory diagram (of eventually a
different type, e.g., chopping a diagram in case \ref{item:2a} at
$k+1$ will lead to a diagram in case \ref{item:1b} flipped vertically). Hence this
provides a way to pass to a smaller line.

%
%
%
%
%

\section{Main Results}
\label{sec:main-results}
\subsection{The catalog of nontrivial joins}
\label{sec:catalog-non-trivial-joins}
In this section we will give a complete description of the possible shapes
of trajectories picturing nontrivial joins. 

We will  say that a congruence $\lambda$ is even (resp.\ odd) if
$\alt_\lambda$ is even (resp.\ odd). It is immediate to see that
if $\th$ is even (odd), the trajectory of $\th\o\del$ finishes
at one of the lower (upper) corners of the diagram: that is, $P(n)$ is
equal to either $(0,0)$ or $(l,0)$ (resp., $(0,k)$ or
$(l,k)$). Likewise, if $\del$ is even (odd), the trajectory of $\th\o\del$ finishes
at one of the left (right) corners of the diagram. 

We will focus in the present subsection in the case $\ext(\th)\cap\ext(\del)
  =\{0,n\}$. Assume that the
join $\th\o\del$ is nontrivial and that $\alt_\th,\alt_\del\geq 3$. In view of
Lemma~\ref{lem:bounces}, we can classify the congruence $\th={\conr{k}{r}}$ according to it having no rests, having rests only on the top
(i.e., $r_i=(2k+1)i -k -1$ for each $i\leq \card{\br}$),  having rests only on the bottom
(i.e., $r_i=(2k+1)i-1$ for each $i\leq \card{\br}$), or having rests
everywhere. The same holds for $\del$.

\begin{lemma}\label{lem:autocruces-varios}
  Assume that  $\th\o\del$ is nontrivial, $\ext(\th)\cap\ext(\del)
  =\{0,n\}$, 
  and $\alt_\th,\alt_\del\geq 3$.
  \begin{enumerate}
  \item \label{selfcrossingevenodd} $\theta$ even and $\delta$ odd imply
    \textup[$\th$ has rests on bottom $\iff$ $\del$ has rests on top\textup].
  \item \label{selfcrossingoddodd} $\theta$ odd and $\delta$  odd
    imply 
    \textup[$\th$ has rests on top $\iff$ $\del$ has rests on top\textup].
  \item \label{selfcrossingoddeven}
    $\theta$ odd and $\delta$ even imply 
    \textup[$\th$ has rests on top $\iff$ $\del$ has rests on bottom\textup].
  \item For every $\theta$ and $\delta$, 
    \textup[$\th$ has rests on bottom $\iff$ $\del$ has rests on bottom\textup].
  \end{enumerate}
\end{lemma}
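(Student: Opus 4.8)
The plan is to regard all four items as instances of a single statement about the corners of the trajectory diagram, prove that statement at the origin, and transport it to the only other relevant corner by reflecting the line. To set this up, note first that every item compares the rest type of $\th$ on a horizontal border of the trajectory diagram of $\th\o\del$ with the rest type of $\del$ on a vertical border, where ``bottom'' means the border through the origin (for $\th$ the side at ordinate $0$, for $\del$ the side at abscissa $0$) and ``top'' the opposite one. By the remarks preceding the lemma, $P(n)$ is a lower (upper) corner according as $\th$ is even (odd) and a left (right) corner according as $\del$ is even (odd); and since $\ext(\th)\cap\ext(\del)=\{0,n\}$, the trajectory visits no corner other than $P(0)=(0,0)$ and $P(n)$. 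Under this dictionary, items~\ref{selfcrossingevenodd}, \ref{selfcrossingoddodd} and~\ref{selfcrossingoddeven} all read ``$\th$ has a rest on the horizontal border through $P(n)$ iff $\del$ has a rest on the vertical border through $P(n)$'', while the fourth item is precisely this assertion for the corner $(0,0)$. So it suffices to prove the fourth item, for arbitrary $\th$ and $\del$ and with no parity hypothesis, and then transport it to $P(n)$.

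For the fourth item I would argue by contradiction. Suppose $\th$ has a rest on the bottom --- so $r_1=2k$ is a rest and $k\notin\br$ --- but $\del$ has no rest on its left border; the aim is to show that $\th\o\del$ is trivial, the reverse implication following by interchanging $\th$ and $\del$. The trajectory leaves the origin along the main diagonal to $(k,k)$, bounces off the top with no rest, and turns downward; the rest of $\th$ at $2k$ displaces every later bottom touch of $\th$ by one unit of abscissa, while the absence of a rest on the left border of $\del$ leaves the facing $\del$-bounces undisplaced, so near the bottom-left corner the two families of segments fall $\tfrac12$ out of phase. Turning this into a contradiction is a short case analysis --- on whether $2k\le l$, and on where the abscissa first returns to $0$ --- of exactly the kind already carried out in Lemmas~\ref{lem:autocruces} and~\ref{lem:bounces} (using Lemmas~\ref{lem:alt-3-rebote} and~\ref{lem:menor-k}): it exhibits either two crossings with one coordinate differing in $\tfrac12$, or a crossing and a bounce with one coordinate differing in $\tfrac12$, whence $\th\o\del$ is trivial by Lemma~\ref{lem:dist-un-medio}.

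To deduce items~\ref{selfcrossingevenodd}--\ref{selfcrossingoddeven} I would apply the fourth item to the images $\th^{\sigma}$ and $\del^{\sigma}$ of $\th$ and $\del$ under the reflection $\sigma(x)=n-x$ of $\tira{L}_n$: these are nontrivial congruences with the same steps $k$, $l$ and frequencies $\alt_\th$, $\alt_\del$, with $\ext(\th^{\sigma})\cap\ext(\del^{\sigma})=\{0,n\}$ and $\th^{\sigma}\o\del^{\sigma}=(\th\o\del)^{\sigma}$ still nontrivial, so the fourth item applies to them. A short computation with $\displ{r}$ records how rests move: a bottom rest $r_i$ of $\th$, for which $r_i-\displ{r}(r_i)\equiv_{2k}0$, corresponds to the rest $n-1-r_i$ of $\th^{\sigma}$, whose displacement is $\equiv_{2k}n-\card{\br}=k\alt_\th$, hence $\equiv_{2k}0$ when $\alt_\th$ is even and $\equiv_{2k}k$ when $\alt_\th$ is odd; thus a bottom rest of $\th$ becomes a bottom rest of $\th^{\sigma}$ if $\alt_\th$ is even and a top rest if $\alt_\th$ is odd, and likewise for $\del$. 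Reading the fourth item for $(\th^{\sigma},\del^{\sigma})$ back through $\sigma$ then yields item~\ref{selfcrossingevenodd} (case $\alt_\th$ even, $\alt_\del$ odd), item~\ref{selfcrossingoddodd} (both odd) and item~\ref{selfcrossingoddeven} ($\alt_\th$ odd, $\alt_\del$ even).

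The step I expect to be the obstacle is the case analysis inside the fourth item: since $l$ can be far larger than $k$, the trajectory makes many $\th$-bounces before its abscissa returns to $0$, and one must follow the unit shifts accumulated from $\th$'s bottom rests in order to locate the $\tfrac12$-discrepancy at the left border. This is routine but fiddly, of the same flavour as --- and no harder than --- the arguments already completed for Lemmas~\ref{lem:menor-k}--\ref{lem:bounces}.
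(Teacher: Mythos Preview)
Your overall plan is sound and the reduction via the reflection $\sigma(x)=n-x$ is correct: the computation showing that a bottom rest of $\th$ becomes a bottom rest of $\th^{\sigma}$ when $\alt_\th$ is even and a top rest when $\alt_\th$ is odd is exactly right, and once item~4 is available for the reflected pair this yields items~1--3 cleanly.

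Two remarks comparing this with the paper's own argument. First, you are working too hard on item~4. You frame it as a fresh contradiction argument with a case analysis ``of exactly the kind already carried out in Lemmas~\ref{lem:autocruces} and~\ref{lem:bounces}'', and then flag this as the main obstacle. But item~4 \emph{is} those two lemmas, not merely analogous to them: Lemma~\ref{lem:alt-3-rebote} guarantees a bounce on the interior of the left border, Lemma~\ref{lem:autocruces} says that bounce has the same type as the adjacent bottom bounce, and Lemma~\ref{lem:bounces} says all bounces on a given side share a type. So item~4 is a one-line citation, not a new case analysis; there is no obstacle.

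Second, for items~1--3 the paper does not use your reflection. Instead it notes that the proof of Lemma~\ref{lem:autocruces} was carried out at the lower-left corner $P(0)$ but works verbatim at whichever corner hosts $P(n)$; combined again with Lemma~\ref{lem:bounces}, this gives each item directly. Your reflection achieves the same effect more structurally: rather than re-running a local argument at a new corner, you transport the whole diagram by an automorphism of $\tira{L}_n$ and reuse item~4. The cost is the small bookkeeping on how top/bottom rests transform under $\sigma$; the benefit is that nothing in the earlier proofs needs to be reopened.
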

\begin{proof}
  For \ref{selfcrossingevenodd}, consider the trajectory diagram of
  $\th\o\del$. We may apply the proof of Lemma~\ref{lem:autocruces} but now looking 
  at the lower right corner, that is, where the trajectory finishes. We conclude that bounces on the right side are of the same type as those on the bottom side. By
  Lemma~\ref{lem:bounces}, we conclude that $\th$ has rests on bottom
  if and only if $\del$ has rests on top. The next two items follow
  by considering the upper right and upper left corners, respectively.

  The final item is simply the consequence of Lemma~\ref{lem:autocruces} under the light of Lemma~\ref{lem:bounces}.
\end{proof}

\begin{theorem}[The catalog]\label{th:catalog}
  Assume that  $\th\o\del\neq L\times L$ and $\ext(\th)\cap\ext(\del)
  =\{0,n\}$, where $\th={\conr{k}{r}}$
  and $\del={\conr{l}{s}}$ with $k\leq l$. Then $\th$ and $\del$ satisfy one of the
  following conditions:
  \begin{enumerate}
  \item $\alt_\del=1$ (i.e., $\del$  is the identity).
  \item $\alt_\del=2$  and $\th$ is mirrored.
  \item $\alt_\del\geq 3$ and one of the following holds:
    \begin{enumerate}
    \item None of them have a rest,
    \item $\th$ is even with rests on top and $\del$ is
      odd without rests or conversely,
    \item both are odd with rests on top,
    \item both have the same parity with rests on the bottom,
    \item $\th$ is even with rests on the bottom and $\del$ is
      odd with rests everywhere or conversely, 
      or
    \item both congruences have rests everywhere.
    \end{enumerate}
  \end{enumerate}
\end{theorem}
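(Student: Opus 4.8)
The plan is a case analysis on $\alt_\del$. If $\alt_\del=1$ then $\del=\mathrm{Id}$ and we are in case~(1) (and there is nothing more to prove, since then $\th\o\del=\th$). If $\alt_\del=2$ then, as $\th\o\del\neq L\times L$, Lemma~\ref{lem:symmetric}(\ref{item:not-mirror-trivial}) forces $\th$ to be mirrored, which is case~(2). So assume $\alt_\del\geq 3$. Here I would first note that $k\leq l$ forces $\alt_\th\geq 3$ as well: the frequency of a congruence pins down its step by $k=\lfloor n/\alt_\th\rfloor$, so $\alt_\th\leq 2$ would give $k\geq\lfloor n/2\rfloor\geq l$, which together with $k\leq l$ collapses to $k=l=\lfloor n/2\rfloor$; a short finite check then shows that the only surviving possibility ($n=3$) yields $\th\o\del=L\times L$, against our hypothesis.

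So both frequencies are at least $3$ and we are squarely in the setting of the trajectory diagram, $\ext(\th)\cap\ext(\del)=\{0,n\}$ being precisely the hypothesis under which those diagrams are used. By Lemma~\ref{lem:bounces} all bounces on a given side are of one type, so each of $\th$ and $\del$ belongs to exactly one of the four classes \emph{no rests / rests only on top / rests only on bottom / rests everywhere}; in addition each congruence is even or odd, which records at which horizontal (for $\th$) resp.\ vertical (for $\del$) border the trajectory terminates. I would then read the permissible combinations off Lemma~\ref{lem:autocruces-varios}: its four biconditionals, conditioned on the parities, tie the class of $\th$ to the class of $\del$. Combining these with the fact that $\th$ and $\del$ cannot both be even --- if they were, then $n\in 0/\th$ and $n\in 0/\del$, while $\ext(\th)\cap\ext(\del)=\{0,n\}$ and $\th\cap\del\subseteq\th\o\del\neq L\times L$ give $\th\y\del=\mathrm{Id}$ by Lemma~\ref{lem:aux-meet}(4), and inspecting the trajectory near the common point $P(0)=P(n)$ then yields a contradiction with $n\geq 3$ --- one prunes the $4\times 4$ grid of class pairs down to precisely the six configurations listed under~(3), reading off for each surviving entry which one it is.

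The subtle point, and the part I expect to be the real work, is that a couple of class/parity pairs pass all of the above local tests yet do not actually arise (the typical offender being $\th$ even with rests only on the bottom paired with a $\del$ having rests everywhere, and its mirror image with $\th,\del$ interchanged). To dispose of these I would use the inductive device sketched after Lemma~\ref{lem:bounces}: discard the part of the diagram with abscissa below $k$ (below $k+1$ when $k$ is a rest of $\th$), which leaves the trajectory diagram of a nontrivial join on a strictly shorter line; by the induction hypothesis that shorter pair is catalogued, and one checks that none of the catalogued shapes, when the amputated strip is restored, produces a forbidden pattern. Alternatively these residual cases can be closed by direct arithmetic, feeding the closed forms $r_i=(2k+1)i-k-1$ (rests on top) resp.\ $r_i=(2k+1)i-1$ (rests on bottom), the divisibility conditions of Theorem~\ref{th:charact-congruences}, and the constraint $\ext(\th)\cap\ext(\del)=\{0,n\}$ (equivalently, Theorem~\ref{th:order}) into a contradiction. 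I expect the main difficulty to be precisely this simultaneous bookkeeping of parity, terminating corner and rest pattern, rather than any single conceptual step.
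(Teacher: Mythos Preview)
Your overall strategy matches the paper's: split on $\alt_\del$, invoke Lemma~\ref{lem:symmetric} for $\alt_\del=2$, and for $\alt_\del\ge 3$ feed Lemma~\ref{lem:bounces} and Lemma~\ref{lem:autocruces-varios} into a case analysis. The paper's own proof is even terser than yours --- it simply asserts that ``a straightforward but dull case analysis'' finishes the job --- so up through your penultimate paragraph you are doing exactly what the paper does, only more carefully.

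The real issue is your final paragraph. You single out ``$\th$ even with rests only on the bottom paired with a $\del$ having rests everywhere'' as a configuration that passes the biconditionals of Lemma~\ref{lem:autocruces-varios} yet supposedly does not arise, and you propose to eliminate it by induction or arithmetic. This is wrong: that configuration \emph{does} arise. Take $\th=\langle 4;8\rangle$ and $\del=\langle 5;5,11\rangle$ on $\mathbf L_{17}$; then $\alt_\th=4$ (even), $\th$ has its single rest at the bottom, $\alt_\del=3$ (odd), $\del$ has rests at top and bottom, $\ext(\th)\cap\ext(\del)=\{0,17\}$, and $\th\vee\del$ has two classes. In fact the paper's own illustrative example for item~(e), namely $\langle 7;14,29\rangle\vee\langle 8;8,17,26,35\rangle$ on $\mathbf L_{44}$, is of exactly this shape: $\langle 7;14,29\rangle$ has frequency $6$, not odd. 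What you have actually detected is a typo in the statement of item~(e): the word ``odd'' attached to the bottom-only congruence should be ``even''. With that correction the case you flag is \emph{in} the catalog, and any attempt to prove it impossible is doomed. So drop the last paragraph entirely; once (e) is read correctly, the pruning you carry out from Lemma~\ref{lem:autocruces-varios} plus ``not both even'' lands every surviving pair in (a)--(f) directly.

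Your handling of ``not both even'' is on the right track but can be made cleaner than ``inspect the trajectory near $P(0)=P(n)$''. By Lemma~\ref{lem:bounces} each congruence lies in one of the four rest-classes; a direct check with Theorem~\ref{th:charact-congruences} shows that any congruence in one of these four classes with even frequency is automatically mirrored. If both $\th$ and $\del$ are mirrored and even, the reflection $x\mapsto n-x$ preserves each of them and fixes the class of $0$, hence induces the identity on each quotient; this gives $x\thr n-x$ and $x\delr n-x$ for all $x$, and then $\lfloor n/2\rfloor$ is a common extreme strictly between $0$ and $n$, contradicting $\ext(\th)\cap\ext(\del)=\{0,n\}$. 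The paper's proof does not spell this out, so here you are filling a genuine gap rather than inventing one.
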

\begin{proof}
  We start from the second item; we obtain our conclusion by the
  contrapositive to Lemma~\ref{lem:symmetric}(\ref{item:not-mirror-trivial}).

  For item 3, since $k\leq l$ we have that $\alt_\th\geq\alt_\del\geq
  3$, and  we are under the hypothesis of
  Lemma~\ref{lem:autocruces-varios}. A straightforward but dull case
  analysis shows that some of the sub-cases applies.
\end{proof}

We underline that each case included in the list before is
possible. An enumeration of examples follows.
\begin{example}
  \begin{enumerate}
  \item $\alt_\del=1$: Trivially, an arbitrary nontrivial $\th$ would do.
  \item $\alt_\del=2$  and $\th$ is mirrored: $\con{2;2,9}\o\con{6}$
    for $n=12$. 
  \item For $\alt_\del\geq 3$:
    \begin{enumerate}
    \item None of them have a rest: this is always nontrivial. The
      reader may find this to be an easy exercise.
    \item $\th$ is even with rests on top and $\del$ is
      odd without rests or conversely:   $\con{4;4,13}\o \con{6}$ for $n=18$ and $\con{6}\o\con{7;7,22}$ for $n=30$.
    \item Both are odd with rests on top: $\con{4;4,13}\o\con{7;7}$ for $n=22$.
    \item Both have the same parity with rests on the bottom:
      $\con{4;8,17}\o\con{7;14}$ for $n=22$.
    \item $\th$ is even with rests on the bottom and $\del$ is
      odd with rests everywhere or conversely:
      $\con{7;14,29}\o\con{8;8,17,26,35}$ for $n=44$  and
      $\con{5;5,11,17,23}\o\con{7;14}$ for $n=29$. 
    \item Both congruences have rests everywhere:  $\con{7;7,15,23,31}\o\con{9;9,19,29}$ for $n=39$.
    \end{enumerate}
  \end{enumerate}
\end{example}

\renewcommand{\arraystretch}{1.5}
\begin{table}[h]
  \begin{center}
    \begin{tabular}{|c|c|}
      \hline
      \multicolumn{2}{|c|}{$\th={\conr{k}{r}}$}                                \\ \hline
      No rests         &   ${\modkk} \cup \{\<x,y\> \st x+y \modkk 0 \}$       \\ \hline
      Rests on top     &   ${\modku} \cup  \{\<x,y\> \st x+y \modku 0 \}$      \\ \hline
      Rests on bottom  &   ${\modku} \cup  \{\<x,y\> \st x+y+1 \modku 0 \}$      \\ \hline
      Rests everywhere &   ${\modkd} \cup  \{\<x,y\> \st x+y+1 \modkd 0 \}$    \\ \hline
    \end{tabular}
  \end{center}
  \caption{Simplified expression of congruences in the catalog, case
    $\alt_\th,\alt_\del\geq 3$.}
  \label{tbl:simpl-expr-congr-catal}
\end{table}

%
%
%
%
%

%
\subsection{Permutability}
\label{sec:permutability}

We now proceed to show that for any  two congruences with a
nontrivial upper bound,  their join is the composition.

It is well known that the relations $\mods$ and $\modt$ permute on any
set of integers that contains $\{0,\dots,\lcm(s,t)\}$. Moreover, this
is witnessed by the fact that every (abelian) group has a Mal'cev term
$p(x,y,z) = x-y+z$ \cite{maltsevTerm}. We will show that every pair of  congruences $\th$
and $\del$ having $\alt_\th,\alt_\del\geq 3$ and 
nontrivial $\th\o\del$ decompose essentially into the union of a modular relation
$x\mods y$ and
one of the form $x+y\modt 0$ or  $x+y+1\modt 0$, and indeed there
exist variants of the Mal'cev term showing that all of these relations
pairwise permute. 

\begin{lemma}\label{lem:permut-mod-y-mas}
  Let $s$ and $t$ be positive integers. The pairs of relations 
  \begin{enumerate}
  \item $\mods$ and $x+y\modt 0$,
  \item \label{item:4} $x+y\mods 0$ and $x+y\modt 0$,
  \item $\mods$ and $x+y+1\modt 0$,
  \item $x+y+1\mods 0$ and $x+y+1\modt 0$, and 
  \item $x+y\mods 0$ and $x+y+1\modt 0$
  \end{enumerate}
  permute on $\{0,\dots,\lcm(s,t)\}$.
\end{lemma}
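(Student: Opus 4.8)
The plan is to exhibit, for each of the five pairs, an explicit Mal'cev-style operation $p(x,y,z)$ on $\mathbb{Z}$ that preserves both relations of the pair and satisfies $p(x,x,z)=z$ and $p(x,z,z)=x$; this immediately yields permutability on $\{0,\dots,\lcm(s,t)\}$ by the standard argument (given $x\mathrel{\alpha}u\mathrel{\beta}y$ we want $x\mathrel{\beta}w\mathrel{\alpha}y$ for some $w$, and $w\doteq p(x,u,y)$ works, since $p(x,u,y)\mathrel{\beta}p(x,x,y)=y$ using $x\mathrel{\beta}x$, $u\mathrel{\beta}u$ — wait, we need $u\mathrel\beta y$ — rather $p(x,u,y)\mathrel\beta p(x,u,u)=x$... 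I will set the bookkeeping up carefully below). Concretely, for pair (1), the relations $\mathord{\equiv_s}$ and $\{\langle x,y\rangle : x+y\equiv_t 0\}$ are both preserved by $p(x,y,z)=x-y+z$: for $\mathord\equiv_s$ this is the abelian-group Mal'cev term, and for the ``reflection'' relation, if $x_1+x_2\equiv_t 0$, $y_1+y_2\equiv_t 0$, $z_1+z_2\equiv_t 0$ then $(x_1-y_1+z_1)+(x_2-y_2+z_2)\equiv_t 0$. For pair (2), where both relations are of reflection type, the same $p(x,y,z)=x-y+z$ again preserves both. For pairs (3), (4), (5), which involve the shifted relation $x+y+1\equiv_t 0$, I would note that $x+y+1\equiv_t 0$ iff $(2x+1)+(2y+1)\equiv_{2t}0$, or more simply work with the substitution $x\mapsto x+\tfrac12$ informally; rigorously, one checks directly that $p(x,y,z)=x-y+z$ still preserves $\{x+y+1\equiv_t 0\}$: if $x_1+x_2+1\equiv_t 0$, $y_1+y_2+1\equiv_t 0$, $z_1+z_2+1\equiv_t 0$, then adding the first and third and subtracting the second gives $(x_1-y_1+z_1)+(x_2-y_2+z_2)+1\equiv_t 0$. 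So in fact the single operation $p(x,y,z)=x-y+z$ works uniformly for all five pairs.

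The key steps, in order, are: (i) verify that $p(x,y,z)=x-y+z$ preserves each of the three kinds of relation appearing in the list — the modular relation $\mathord{\equiv_s}$, the reflection relation $x+y\equiv_t 0$, and the shifted reflection relation $x+y+1\equiv_t 0$ — which in each case reduces to the observation that the set of pairs is a coset (of a subgroup) in $\mathbb{Z}\times\mathbb{Z}$, together with the fact that any affine combination $a_1-a_2+a_3$ of three elements of a coset again lies in that coset; (ii) record that $p$ preserves intersections and hence, for each listed pair $(\alpha,\beta)$, preserves both $\alpha$ and $\beta$; (iii) invoke the Mal'cev-term criterion: if a reflexive relation $\alpha$ and a reflexive relation $\beta$ on a set $S$ are both preserved by a ternary operation $p$ on $S$ with $p(x,x,y)=y$ and $p(x,y,y)=x$, and $S$ is closed under $p$, then $\alpha\circ\beta=\beta\circ\alpha$: given $x\mathrel\alpha u$ and $u\mathrel\beta y$, put $w=p(x,u,y)$; then $x=p(x,u,u)\mathrel\beta p(x,u,y)=w$ and $w=p(x,u,y)\mathrel\alpha p(u,u,y)=y$; (iv) note that $\{0,\dots,\lcm(s,t)\}$ is closed under $x-y+z$ only after reduction modulo $\lcm(s,t)$ — so strictly one works in $\mathbb{Z}/\lcm(s,t)\mathbb{Z}$, or equivalently observes that all the relations in question are invariant under translation by $\lcm(s,t)$, so one may compute $w=p(x,u,y)$ in $\mathbb{Z}$ and then take its representative in $\{0,\dots,\lcm(s,t)\}$ without changing its $\alpha$- and $\beta$-classes. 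This last point is the one place where a little care is needed.

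I expect the main (minor) obstacle to be precisely step (iv): the naive term $x-y+z$ does not land inside $\{0,\dots,\lcm(s,t)\}$, so one must either phrase everything in the cyclic group $\mathbb{Z}/\lcm(s,t)\mathbb{Z}$ (where all five relations are well-defined because $s\mid\lcm(s,t)$ and $t\mid\lcm(s,t)$) and then transport the permutability statement back, or else observe directly that each relation on $\mathbb{Z}$ in the list is a union of cosets of $\lcm(s,t)\mathbb{Z}$ and therefore $w$ and $w \bmod \lcm(s,t)$ have the same class. Everything else is a short and entirely routine coset computation; there is no genuine combinatorial or geometric content here, which is why the lemma can be disposed of quickly once the Mal'cev-term viewpoint is in place. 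One could also simply cite that every abelian group, and every coset space thereof, is congruence-permutable via $x-y+z$, and note that each of our relations is a congruence of a suitable quotient of $\mathbb{Z}\times\mathbb{Z}$; but I would prefer to spell out the one-line verifications since they are so cheap.
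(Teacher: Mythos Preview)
Your argument has a genuine gap at step (iii). The Mal'cev-term criterion you invoke requires both relations to be \emph{reflexive}: to derive $x=p(x,u,u)\mathrel\beta p(x,u,y)=w$ from preservation you are using $(x,x)\in\beta$ and $(u,u)\in\beta$, and similarly the other half uses $(u,u)\in\alpha$ and $(y,y)\in\alpha$. But the reflection-type relations $\{(x,y):x+y\equiv_t 0\}$ and $\{(x,y):x+y+1\equiv_t 0\}$ are \emph{not} reflexive (for instance $(1,1)$ lies in the first only when $t\mid 2$). Your verification in step (i) that these relations are cosets preserved by $p(x,y,z)=x-y+z$ is perfectly correct, but closure under $p$ together with the Mal'cev identities gives permutability only for relations containing the diagonal; being a coset is not enough.

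Concretely, the single witness $w=x-u+y$ fails already for item~1: take $s=3$, $t=5$, $a=0$, $b=3$, $c=2$; then $a\equiv_3 b$ and $b+c\equiv_5 0$, but $w=a-b+c=-1$ gives $a+w=-1\not\equiv_5 0$ (and reducing modulo $15$ does not help). For item~2 with $s=3$, $t=5$, $a=1$, $b=2$, $c=3$ one has $a+b\equiv_3 0$ and $b+c\equiv_5 0$, yet $w=a-b+c=2$ gives $a+w=3\not\equiv_5 0$. The paper's proof does not try to use a uniform Mal'cev argument; instead it writes down a separate explicit witness for each item --- $w=c+b-a$ for item~1, $w=-(a+b+c)$ for item~2, and analogous variants for the rest --- and then shifts by $\pm\lcm(s,t)$ to land in $\{0,\dots,\lcm(s,t)\}$. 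Note that $-(a+b+c)$ is not of the form $p(\sigma(a),\sigma(b),\sigma(c))$ for any permutation $\sigma$, so the case analysis is genuinely necessary; the fix is just to supply the right formula in each of the five cases.
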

\begin{proof}
  We show the first item. Assume $a\mods b$ and $b + c \modt 0$ where $a$, $b$ and $c$ are
  pairwise distinct. Let $x\doteq c+b-a$; immediately,
  we have $c  \mods c + (b -a) = x$. Also, $x +a  = c + b \modt  0$.
 
  It is clear that $-\lcm(s,t) < x < 2\lcm(s,t)$; we may obtain a solution in
  the intended range by adding $\pm \lcm(s,t)$.

  The argument for, e.g., item~\ref{item:4} uses $x\doteq
  -(a+b+c)$. The rest are very similar.
\end{proof}
\begin{corollary}\label{cor:permutability}
  Every pair of congruences appearing in
  Table~\ref{tbl:simpl-expr-congr-catal} (regardless of their join)
  permute.
\end{corollary}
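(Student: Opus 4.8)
The plan is to deduce permutability straight from the simplified presentations collected in Table~\ref{tbl:simpl-expr-congr-catal}. Every congruence $\theta$ listed there is a union $\theta=\alpha_\theta\cup\beta_\theta$, where $\alpha_\theta=\{\langle x,y\rangle:x\equiv_s y\}$ is a modular relation with $s\in\{2k,2k+1,2k+2\}$, and $\beta_\theta$ is a \emph{reflection} relation, of one of the two shapes $\{\langle x,y\rangle:x+y\equiv_s 0\}$ or $\{\langle x,y\rangle:x+y+1\equiv_s 0\}$; likewise $\delta=\alpha_\delta\cup\beta_\delta$ with modulus $t$. Since composition of binary relations distributes over union on either side,
\[
  \theta\circ\delta=(\alpha_\theta\circ\alpha_\delta)\cup(\alpha_\theta\circ\beta_\delta)\cup(\beta_\theta\circ\alpha_\delta)\cup(\beta_\theta\circ\beta_\delta),
\]
and $\delta\circ\theta$ decomposes into the four opposite compositions. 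Hence $\theta\circ\delta=\delta\circ\theta$ will follow as soon as each of the four atomic pairs $(\alpha_\theta,\alpha_\delta)$, $(\alpha_\theta,\beta_\delta)$, $(\beta_\theta,\alpha_\delta)$, $(\beta_\theta,\beta_\delta)$ permutes.

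Next I would dispatch these four kinds of atomic pair in turn. Two modular relations permute because $\equiv_s\circ\equiv_t$ and $\equiv_t\circ\equiv_s$ both equal $\equiv_{\gcd(s,t)}$ (given $a\equiv_s b\equiv_t c$ one has $a\equiv_{\gcd(s,t)}c$, so the Chinese Remainder Theorem supplies the required intermediate element). A modular relation paired with a reflection relation is precisely Item~1 of Lemma~\ref{lem:permut-mod-y-mas} when the reflection is of the ``$x+y\equiv0$'' kind and Item~3 when it is of the ``$x+y+1\equiv0$'' kind. A pair of two reflection relations is Item~2, Item~4 or Item~5 of the same lemma, according as the two are both of the first kind, both of the second, or one of each. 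Thus Items~1--5 of Lemma~\ref{lem:permut-mod-y-mas}, together with the elementary modular fact, exhaust every atomic pair that can arise, and so any $\theta,\delta$ drawn from Table~\ref{tbl:simpl-expr-congr-catal} permute. Note that the argument never invokes any relation between the joins of $\theta$ and $\delta$ --- this is exactly the force of the clause ``regardless of their join''.

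I expect no real obstacle here: the whole content sits inside Lemma~\ref{lem:permut-mod-y-mas}, and the corollary is the routine observation that composition distributes over the union defining each simplified congruence. The only point deserving a line of care is the choice of ambient set: the Mal'cev-style witnesses used in Lemma~\ref{lem:permut-mod-y-mas} are exhibited inside $\{0,\dots,\lcm(s,t)\}$, so the identities above should be read over that interval --- or, more comfortably, over all of $\mathbb{Z}$, where each of the relations involved is an honest equivalence relation and no adjustment of witnesses is needed. Once a common domain is fixed, matching the two four-term decompositions is purely formal.
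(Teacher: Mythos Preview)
Your argument is correct and is essentially the paper's own proof: the paper likewise writes each congruence in the table as a union of a modular relation and a reflection relation, invokes Lemma~\ref{lem:permut-mod-y-mas} (together with the well-known permutability of two modular relations) to handle every atomic pair, and then uses the elementary observation that a relation permuting with each summand of a union permutes with the union. Your explicit four-term decomposition via distributivity of $\circ$ over $\cup$ is just an unwound form of that observation, and your remark about the ambient set is a fair caveat that the paper leaves equally implicit.
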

\begin{proof}
  Let $\th$ and $\del$ be a pair of congruences in the table. These are of the form
  $\vartheta\cup\varpi$, where each of  $\vartheta$ and $\varpi$ are
  one of the relations $x \mods y$,  $x+y\modt 0$, or $x+y+1\modt
  0$ for suitable $s$ and $t$. It is easy
  to check that for every three relations $\phi$, $\vartheta$ and
  $\varpi$ on a set such that  $\phi$ permutes with the other two, then
  $\phi$ permutes with $\vartheta\cup\varpi$. But now we may apply
  Lemma~\ref{lem:permut-mod-y-mas} to see that the components
  $\vartheta$ and $\varpi$ of each congruence pairwise permute, and
  hence $\th$ and $\del$ permute by the previous observation.
\end{proof}
%
%
%
%
%

\begin{theorem}\label{th:permutability}
  Assume that  $\th\o\del\neq L\times L$. Then $\th\circ\del =
  \del\circ\th = \th\o\del$.
\end{theorem}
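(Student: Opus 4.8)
The plan is to reduce the general case to the catalog situations already analyzed and then invoke the permutability results of Corollary~\ref{cor:permutability}, using the periodicity established in Lemma~\ref{lem:periodicas} to glue things back together.

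First I would dispose of the degenerate cases. If $\th\o\del\neq L\times L$ but one of the congruences, say $\del$, is the identity, permutability with the identity is trivial. If $\alt_\del=2$, then by Theorem~\ref{th:catalog} $\th$ must be mirrored, and Lemma~\ref{lem:symmetric}(\ref{item:mirrored-permute}) gives exactly that $\th$ and $\del$ permute. So from now on I may assume $\alt_\th,\alt_\del\geq 3$.

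Next I would handle the core case $\ext(\th)\cap\ext(\del)=\{0,n\}$. Here Theorem~\ref{th:catalog} tells us the pair $(\th,\del)$ falls into one of the subcases of item~3, and Table~\ref{tbl:simpl-expr-congr-catal} expresses each of $\th$ and $\del$ — in the appropriate parity/rest configuration — as a union $\vartheta\cup\varpi$ of a modular relation and a ``reflected'' relation of the form $x+y\equiv 0$ or $x+y+1\equiv 0$ modulo a suitable integer. (Strictly one should check that the catalog guarantees the two moduli involved admit a common range $\{0,\dots,\lcm(s,t)\}$ containing $[0,n]$; this is where the divisibility conditions from Theorem~\ref{th:charact-congruences} are used.) Corollary~\ref{cor:permutability} then says $\th$ and $\del$ permute, i.e., $\th\circ\del=\del\circ\th$. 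Since in a lattice of equivalence relations $\th\circ\del$ is an equivalence relation exactly when $\th$ and $\del$ permute, and in that case it equals $\th\o\del$, we conclude $\th\circ\del=\del\circ\th=\th\o\del$ in this case.

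Finally I would lift to the general case where $\ext(\th)\cap\ext(\del)$ may be larger. Enumerate the common extremes $0=e_0<e_1<\dots<e_N=n$ as in Remark~\ref{rem:periodicidad}. By Lemma~\ref{lem:periodicas} both foldings are built from successive reflections of their restrictions $\th_{e_1},\del_{e_1}$ between consecutive common extremes (with rests at common extremes behaving coherently by Lemma~\ref{lem:coinciden-en-etas}). The restricted congruences $\th_{e_1},\del_{e_1}$ live on $\tira{L}_{e_1}$, have $\ext(\th_{e_1})\cap\ext(\del_{e_1})=\{0,e_1\}$, and by Lemma~\ref{lem:frecuencias}(3) satisfy $(\th\o\del)_{e_1}=\th_{e_1}\o\del_{e_1}$, which is nontrivial; so the previous paragraph gives $\th_{e_1}\circ\del_{e_1}=\del_{e_1}\circ\th_{e_1}$. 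To finish, given $\<a,c\>\in\th\o\del$, I would use that $e_1$ is the step of $\th\y\del$ (Theorem~\ref{thm:meet}), so every element is $(\th\o\del)$-equivalent to a representative in $[0,e_1]$, push the witnessing zig-zag $a\thr x_1\delr x_2\thr\cdots\delr c$ down to $[0,e_1]$ exactly as in the proof of Lemma~\ref{lem:frecuencias}(3), apply permutability there to get a length-two path, and then reflect that path back out to the appropriate blocks. The main obstacle I anticipate is precisely this last gluing step: making rigorous that a reflection of a $\th_{e_1}$-then-$\del_{e_1}$ path is a genuine $\th$-then-$\del$ path requires tracking how reflection interacts with the rest structure at each $e_i$, which is exactly the content encoded in Lemmas~\ref{lem:periodicas} and~\ref{lem:coinciden-en-etas} and must be applied with care when $e_i$ happens to be a common rest.
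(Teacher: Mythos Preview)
Your approach is essentially the paper's, and the first two stages (degenerate frequencies, then the core case via the catalog and Corollary~\ref{cor:permutability}) match it closely. The only substantive difference is in the final reduction, where you make things harder than necessary and consequently identify an ``obstacle'' that the paper's argument sidesteps entirely.

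Two simplifications. First, to prove permutability you only need $\th\circ\del\subseteq\del\circ\th$, so start with a single step $a\thr b\delr c$, not a full zig-zag witnessing $\langle a,c\rangle\in\th\o\del$. Second, and this is the key point: by Theorem~\ref{thm:meet} the step of $\th\wedge\del$ is $e_1$, so every element of $L$ is $(\th\wedge\del)$-equivalent (not merely $(\th\o\del)$-equivalent, as you wrote) to a representative in $[0,e_1]$. Pick $a',b',c'\in[0,e_1]$ with $a\mathrel{(\th\wedge\del)}a'$, $b\mathrel{(\th\wedge\del)}b'$, $c\mathrel{(\th\wedge\del)}c'$; since $\th\wedge\del\subseteq\th,\del$, you immediately get $a'\thr b'\delr c'$. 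Permutability of the restrictions gives $w\in[0,e_1]$ with $a'\delr w\thr c'$, and then $a\delr a'\delr w\thr c'\thr c$ yields $a\delr w\thr c$ directly. No ``reflecting back'' is needed, and the interaction with rests at the $e_i$ never arises: the meet relation already lives inside both $\th$ and $\del$, so transporting along it is free. A minor organizational point: your invocation of Theorem~\ref{th:catalog} for the $\alt_\del=2$ case presupposes $\ext(\th)\cap\ext(\del)=\{0,n\}$; cite Lemma~\ref{lem:symmetric} (or its Corollary) instead, which has no such hypothesis.
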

\begin{proof}
  We begin by considering the case $\ext(\th)\cap\ext(\del)
  =\{0,n\}$; we divide it into three sub-cases, whether $\alt_\del$
  equals 1 or 2, or $\alt_\del\geq 3$.
  
  If $\alt_\del=1$, then $\del=\mathrm{Id}_L$ and hence $\th\circ\del =
  \del\circ\th = \th$ regardless of any other assumption on $\th$.

  If  $\alt_\del=2$, since $\th\o\del\neq L\times L$ we infer that
  $\th$ is mirrored and hence both congruences
  permute by  Lemma~\ref{lem:symmetric}. 

  Finally, we consider the case where  $\alt_\del\geq 3$. According to
  the catalog of nontrivial joins, $\th$ and $\del$ appear in
  Table~\ref{tbl:simpl-expr-congr-catal} and hence
  Corollary~\ref{cor:permutability} implies that they permute.

  Now we wrap up the proof for the general case. Assume $x\thr y \delr
  z$. By Theorem~\ref{thm:meet} we may find $x'$, $y'$ and $z'$ between $0$ and
  $\eta_1$ such that $x\mathrel{\th\cap\del}x'$,
  $y\mathrel{\th\cap\del}y'$ and  $z\mathrel{\th\cap\del}z'$. The
  restrictions $\th'$, $\del'$ to $[0,\eta_1]$ of $\th$ and $\del$,
  respectively, are congruences on $L_{\eta_1}$ with $\ext(\th')\cap\ext(\del')
  =\{0,\eta_1\}$; therefore they permute. Find $w\in [0,\eta_1]$ such
  that $x'\delr' w \thr'  z'$. We immediately conclude that $x\delr
  w \thr  z$ by construction of $x'$ and $z'$, and we have our result.
\end{proof}
%
%
%
%
%

\subsection{Criterion for nontriviality}
\label{sec:algor-decide-non-triv}
We now proceed to give a complete criterion to decide if the join of two
congruences $\th={\conr{k}{r}}$ and $\del={\conr{l}{s}}$ is not
trivial. It can be regarded as a  converse to Theorem~\ref{thm:meet}.

As a first ingredient, we 
must enumerate the elements of $\ext(\th)\cap\ext(\del) =\{\eta_i \st
0\leq i < N\}$.
\begin{theorem}
  Assume $\th$ and $\del$ are nontrivial congruences of a line
  $\tira{L}$. Then $\th\o\del\neq L\times L$ if and only if
  \begin{enumerate}
  \item\label{item:1} for all $i$, $\eta_i$ is part of a $\th$-rest if
    and only if it is part of a $\del$-rest,
  \item\label{item:phi-cong} $\gamma\doteq{\con{\eta_1;\br\cap\bs}}$ is congruence of $\tira{L}$,
  \item the restrictions of  $\th$ and $\del$ to
    $\{0,\dots,\eta_1\}$ are a pair of  congruences appearing in the catalog,
  \item\label{item:th-compatible} the rests of  $\th$ are compatible
    with $\gamma$, and
  \item the rests of  $\del$ are compatible with $\gamma$.
  \end{enumerate}
\end{theorem}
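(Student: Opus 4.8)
The plan is to extract both implications from results already at hand --- the description of the meet (Theorem~\ref{thm:meet}), the order criterion (Theorem~\ref{th:order}), and the catalog (Theorem~\ref{th:catalog}) --- the common device being that, when $\gamma\doteq{\con{\eta_1;\br\cap\bs}}$ happens to be a congruence, passing to the quotient by $\gamma$ collapses $\tira{L}$ onto its initial segment $[0,\eta_1]$, reducing nontriviality of $\th\o\del$ to nontriviality of the join of the restrictions $\th_{\eta_1},\del_{\eta_1}$.

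For the ``only if'' direction, assume $\th\o\del\neq L\times L$. Condition~\ref{item:1} is Lemma~\ref{lem:coinciden-en-etas} applied at each $\eta_i\in\ext(\th)\cap\ext(\del)$. By Theorem~\ref{thm:meet}, $\th\y\del={\con{\eta_1;\br\cap\bs}}=\gamma$, so $\gamma$ is a congruence (Condition~\ref{item:phi-cong}); and from $\gamma=\th\y\del\leq\th$ and $\gamma\leq\del$ the order criterion Theorem~\ref{th:order} yields Condition~\ref{item:th-compatible} and its $\del$-counterpart. For Condition~3, note first that $\eta_1$ is not the right part of a rest: otherwise $\eta_1-1$ would be a $\th$-extreme and, by Lemma~\ref{lem:coinciden-en-etas}, also a $\del$-extreme, contradicting the minimality of $\eta_1$ (here $\eta_1-1\geq 1$, since rests do not start at $0$). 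Hence $\th_{\eta_1}$ and $\del_{\eta_1}$ are congruences of $\tira{L}_{\eta_1}$, they satisfy $\ext(\th_{\eta_1})\cap\ext(\del_{\eta_1})=\{0,\eta_1\}$ by the minimality of $\eta_1$, and by Lemma~\ref{lem:frecuencias}(3) their join equals $(\th\o\del)_{\eta_1}$. The latter is not the full relation on $[0,\eta_1]$, for otherwise $0\mathrel{(\th\o\del)}1$ and $\th\o\del=L\times L$ by Lemma~\ref{lem:carac-trivial}. Therefore Theorem~\ref{th:catalog} places $(\th_{\eta_1},\del_{\eta_1})$ in the catalog, which is Condition~3.

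For the ``if'' direction, assume Conditions~1--5. Since $\gamma$ is a congruence (Condition~\ref{item:phi-cong}), $\eta_1\in\ext(\th)\cap\ext(\del)$, and $\br\cap\bs$ is contained in both $\br$ and $\bs$, Condition~\ref{item:th-compatible} and its $\del$-counterpart allow us to apply Theorem~\ref{th:order}, giving $\gamma\leq\th$ and $\gamma\leq\del$. By Condition~3, $\th_{\eta_1}$ and $\del_{\eta_1}$ are congruences of $\tira{L}_{\eta_1}$ --- proper ones, since $0\mathrel{\th_{\eta_1}}1$ would make $\th$ trivial by Lemma~\ref{lem:carac-trivial} --- with $\ext(\th_{\eta_1})\cap\ext(\del_{\eta_1})=\{0,\eta_1\}$, and, since the pair appears in the catalog, $\th_{\eta_1}\o\del_{\eta_1}$ is not the full relation on $[0,\eta_1]$. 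It remains to lift nontriviality to $\tira{L}$: the congruence $\gamma$ has step $\eta_1$, so by Lemma~\ref{lem:basicas-con-Ln}(3) every $x\in[0,n]$ is $\gamma$-equivalent to a unique $\bar x\in[0,\eta_1]$, and $\bar 0=0$, $\bar 1=1$. Suppose, for a contradiction, that $\th\o\del=L\times L$; fix a chain $0=x_0,x_1,\dots,x_m=1$ whose consecutive terms are alternately $\th$- and $\del$-related. Since $\gamma\leq\th$ and $\gamma\leq\del$, each $\bar x_i$ is related to $\bar x_{i+1}$ by the same one of $\th,\del$ as $x_i$ to $x_{i+1}$; as all $\bar x_i$ lie in $[0,\eta_1]$, this is a chain for $\th_{\eta_1}$ and $\del_{\eta_1}$, whence $0\mathrel{(\th_{\eta_1}\o\del_{\eta_1})}1$, so $\th_{\eta_1}\o\del_{\eta_1}$ is full, a contradiction. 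Therefore $\th\o\del\neq L\times L$.

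The main obstacle is the appeal, in the ``if'' direction, to the \emph{converse} of Theorem~\ref{th:catalog}: that a pair of congruences of a line whose only common extremes are the endpoints and whose rest-patterns match one of the catalog's cases genuinely has a nontrivial join. If this is not already available, it can be obtained from the normal forms of Table~\ref{tbl:simpl-expr-congr-catal}: in each case $\th_{\eta_1}$ and $\del_{\eta_1}$ are the displayed unions of a congruence-modulo relation with a reflected one, and a computation in the spirit of Lemma~\ref{lem:permut-mod-y-mas} --- using the divisibility constraints those pieces satisfy as honest congruences of $\tira{L}_{\eta_1}$ (Theorem~\ref{th:charact-congruences}) --- shows that the join omits the pair $\langle 0,1\rangle$. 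The remaining verifications (that $x\mapsto\bar x$ is well defined on $\gamma$-classes, and that the replacement above preserves the alternation) are immediate from $\gamma\leq\th$ and $\gamma\leq\del$.
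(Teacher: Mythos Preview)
Your argument is correct and tracks the paper's proof closely: the ``only if'' direction is identical (Lemma~\ref{lem:coinciden-en-etas}, Theorem~\ref{thm:meet}, Theorem~\ref{th:order}, Theorem~\ref{th:catalog}), and in the ``if'' direction your reduction to $[0,\eta_1]$ via $\gamma\leq\th,\del$ and your chain-projection lifting are exactly the paper's use of Theorem~\ref{th:order} and Lemma~\ref{lem:frecuencias}(3).

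The one genuine divergence is precisely the step you flag as the main obstacle. You propose to obtain the converse of the catalog---that a pair on $\tira{L}_{\eta_1}$ with $\ext\cap\ext=\{0,\eta_1\}$ matching one of the listed patterns has nontrivial join---by a case-by-case computation from the normal forms in Table~\ref{tbl:simpl-expr-congr-catal}. The paper avoids this entirely: it observes that any catalog pair permutes (Corollary~\ref{cor:permutability} for $\alt\geq 3$, Lemma~\ref{lem:symmetric}(\ref{item:mirrored-permute}) for $\alt_\del=2$, and trivially for $\alt_\del=1$), so if the join were trivial then $\del\circ\th$ would be full; solving $0\delr e\thr k$ and then $l\delr e'\thr e$ produces two \emph{distinct nonzero} common extremes $e,e'$ (nonzero since $0\nthr k$, distinct since $0\mathrel{\cancel{\del}} l$), contradicting $\ext(\th)\cap\ext(\del)=\{0,\eta_1\}$. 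This two-line permutability trick is uniform and sidesteps the case analysis you anticipate; your route would work but is longer, and note that Table~\ref{tbl:simpl-expr-congr-catal} only covers $\alt_\th,\alt_\del\geq 3$, so you would still need to handle the $\alt_\del\in\{1,2\}$ catalog entries separately.
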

\begin{proof}
  $(\tne)$ Assume all the conditions hold for $\th$ and $\del$ as in
  the hypothesis, and by way of contradiction assume
  $\th\o\del=L\times L$. We first solve  the problem for the
  case where $\ext(\th)\cap\ext(\del) =\{0,n\}$. By the third
  condition (the only not trivial in this case), both congruences
  appear in the catalog and then they permute by
  Corollary~\ref{cor:permutability}; hence $\del\circ\th=L\times 
  L$. 
  Since $0 \mathrel{(\del\circ\th)} k$, there
  exist  $e\in L$ such that $0 \delr e \thr k$; we have
  $e\in\ext(\th)\cap\ext(\del)$ by definition. Now consider
  $l\mathrel{(\del\circ\th)} e$; then the $e'$ satisfying  $l \delr e' \thr e$ is also a
  common extreme. Both $e$ and $e'$ are not null since $0 \mathrel{\cancel{\th}} k$, 
  and $e\neq e'$ since $0 \mathrel{\cancel{\del}} l$. This contradicts
  $\ext(\th)\cap\ext(\del) =\{0,n\}$.
  
  Next, we reduce the general case to the previous one by using the
  other conditions. It is immediate by Theorem~\ref{th:order} that
  $\gamma\subseteq\th,\del$. Now the arguments in
  Lemma~\ref{lem:frecuencias}(3) %
  will show that
  $\theta_{e_{1}}\vee\delta_{e_{1}}=(\theta\vee\delta)_{e_{1}} =
  [0,e_1]\times [0,e_1]$; but $\theta_{e_{1}}$ and $\delta_{e_{1}}$
  are congruences on $\tira{L}_{e_1}$ having
  $\ext(\th_{e_{1}})\cap\ext(\del_{e_{1}}) =\{0,e_1\}$, a
  contradiction.

  $(\ent)$ Assume that the join is not trivial. The first condition
  follows from Lemma~\ref{lem:coinciden-en-etas}. The second and the
  last two are consequences of the characterization of the order
  (Theorem~\ref{th:order}) and the 
  meet (Theorem~\ref{thm:meet}). Finally, Theorem~\ref{th:catalog}
  yields the third condition.
\end{proof}
%
%
%
%
%


%
\subsection{Computing the join}
\label{sec:computing-join}
In this section we will compute the step of the join of two
congruences; that is, given $\th,\del \in \Con \tira{L}$, we will
calculate the cardinality of $\tira{L}/\th\o\del$.

In the following lemmas before 
Theorem~\ref{thm:caracterizacion sup e inf}, we work under the
assumption that 
$\ext(\th)\cap \ext(\del) =\{0,n\}$ and $\th\o\del\neq L\times L$. By
the results of Section~\ref{sec:permutability}, this
implies that $\th$ and $\del$ permute. 

\begin{lemma}\label{lem:extremos}
  Assume that $\th$ or  $\del$ has at least one rest. 
  Then  for all $x\in\ext(\th\o\del)$, either 
  \begin{enumerate}
  \item $x\in\ext(\th)\cup\ext(\del)$ or
  \item $x$ is at a distance strictly less than 1 from a crossing.
  \end{enumerate}
  Moreover, if $i$ ($h$) denotes the number of crossings at (half-)
  integral coordinates, we have
  \[\card{\ext(\th\o\del)} = \card{\ext(\th)} +  \card{\ext(\del)} +
  2i + 4h -2.\]
\end{lemma}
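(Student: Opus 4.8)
The plan is to analyze the trajectory diagram of $\th\o\del$ and track how each extreme of the join arises geometrically. First I would recall that, since $\ext(\th)\cap\ext(\del)=\{0,n\}$ and the join is nontrivial, Theorem~\ref{th:permutability} gives that $\th$ and $\del$ permute, so $\th\o\del=\th\circ\del=\del\circ\th$. Thus $x$ is an extreme of the join iff $x$ lies in the block $0/(\th\o\del)$ or in $k'/(\th\o\del)$ where $k'$ is the step of the join; equivalently, $x$ is connected to $0$ (or to the other endpoint) by a path that alternates $\th$- and $\del$-steps. In the trajectory diagram, being $(\th\o\del)$-related to $x$ means being reachable from $P(x)$ by horizontal and vertical moves along the drawn segments, so the extremes of the join are exactly the points of $\tira{L}$ whose images under $P$ lie on the same horizontal-vertical connected component as $P(0)$ or as $P(n)$.

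For the dichotomy, I would argue as follows. Take $x\in\ext(\th\o\del)$ with $x\notin\{0,n\}$, and suppose $x\notin\ext(\th)\cup\ext(\del)$; I must show $x$ is within distance $<1$ of a crossing. Since $x$ is not an extreme of either congruence, the trajectory segment through $P(x)$ has slope $\pm 1$ in a neighborhood of $P(x)$ (no bounce at $P(x)$). Being an extreme of the join, $P(x)$ must be horizontally or vertically aligned with $P(0)$ or $P(n)$, i.e.\ with a corner of the rectangle; but $x$ itself is interior to the diagram. The only way a point on a slope-$\pm 1$ segment can be ``pulled back'' to the corner component without itself being a bounce is for the segment through it to meet another segment — a crossing — whose other branch continues toward the corner. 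Because segments have slope $\pm1$ and crossings occur at (half-)integral coordinates, the nearest lattice or half-lattice point of $\tira{L}$ on that segment lying at a crossing is within distance less than $1$ of $P(x)$; here I would use Corollary~\ref{cor:vert-iguales} and Lemmas~\ref{lem:dist-un-medio}, \ref{lem:bounces} to rule out the degenerate alignments (overlapping segments, crossings differing by $\tfrac12$) that nontriviality forbids, and to guarantee that the relevant crossing is the one adjacent to $P(x)$ on its own segment.

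For the counting formula, the strategy is to decompose $\ext(\th\o\del)$ according to the dichotomy just proved. The block structure: $|\ext(\th\o\del)| = |0/(\th\o\del)| + |k'/(\th\o\del)|$. Each point of $\ext(\th)$ and each point of $\ext(\del)$ contributes, but $0$ and $n$ are counted in both $\ext(\th)$ and $\ext(\del)$, giving the $-2$ correction (as $|\ext(\th)|+|\ext(\del)|$ double-counts the two shared endpoints). Each integral-coordinate crossing is a self-intersection point of the trajectory where two line segments meet; locally it merges the horizontal line and the vertical line through that point, so it identifies two points of $\tira{L}$ that were a priori in different $\th$- or $\del$-blocks — I would check that each such crossing forces $2$ additional elements of $\tira{L}$ into $\ext(\th)\cup\ext(\del)$'s join-closure that were not already extremes of $\th$ or $\del$ (hence the coefficient $2i$), and each half-integral crossing, by the geometry of slope-$\pm1$ segments through a half-lattice point, involves $4$ nearby elements of $\tira{L}$ (the four points of the line at distance $<1$ in the four segment-directions), contributing $4h$. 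I would make this precise by a local picture at each crossing type and a disjointness argument: distinct crossings contribute disjoint sets of new extremes, because two crossings contributing a common element would force that element's two coordinates to both be aligned with a corner, collapsing the diagram and contradicting nontriviality via Lemma~\ref{lem:dist-un-medio}.

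The main obstacle I anticipate is the disjointness/no-double-counting bookkeeping: one must be sure that the sets ``extremes coming from $\ext(\th)$'', ``from $\ext(\del)$'', ``from integral crossings'', and ``from half-integral crossings'' overlap exactly in the two endpoints $0,n$ and nowhere else, and that every extreme of the join falls into exactly one category with the stated multiplicity. This requires carefully using nontriviality (through Lemmas~\ref{lem:dist-un-medio} and \ref{lem:bounces} and Corollary~\ref{cor:vert-iguales}) to exclude coincidences — e.g.\ a crossing sitting exactly at a bounce, or two crossings sharing a coordinate — each of which would either merge categories or collapse the join. Once those exclusions are in place, the formula follows by adding up the local contributions.
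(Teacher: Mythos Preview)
Your argument for the dichotomy has a genuine gap. The claim that ``being an extreme of the join, $P(x)$ must be horizontally or vertically aligned with $P(0)$ or $P(n)$'' is not correct: membership in $0/(\th\o\del)$ means $P(x)$ is reachable from $P(0)$ by a \emph{chain} of horizontal and vertical moves, not that it shares a coordinate with a corner. So the geometric picture you build on does not hold, and the rest of the paragraph (``the only way a point on a slope-$\pm1$ segment can be pulled back to the corner component\dots'') never gets off the ground.

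The paper's approach is local rather than global. An interior point $x\notin\ext(\th)\cup\ext(\del)$ is an extreme of $\th\o\del$ exactly when either $x-1\mathrel{(\th\o\del)}x+1$ (no rest at $x$) or $x\mathrel{(\th\o\del)}x\pm1$ (rest at $x$). In the first case permutability yields $y,y'$ with $x-1\thr y\delr x+1$ and $x-1\delr y'\thr x+1$; generically $y'=y\pm 2$, which forces a second trajectory segment to pass through $P(x)$, i.e.\ an integral crossing there, contributing the two extremes $x$ and $y\pm1$. The alternative (parallel segments at horizontal distance $2$, no crossing) is where the hypothesis ``$\th$ or $\del$ has at least one rest'' is actually used: tracing those three parallel fragments to their bounces shows none of $\th,\del$ can have any rest, contradicting the assumption. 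In the rest case one similarly gets $y'=y\pm1$ and a half-integral crossing with four nearby extremes. You never invoke the ``at least one rest'' hypothesis, which is a symptom that the mechanism producing the crossing is missing from your sketch.

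For the counting formula your outline is in the right spirit, but once the local analysis above is in place the bookkeeping is immediate: each integral crossing contributes exactly two new extremes (the two line elements mapped to that point), each half-integral crossing contributes exactly four (the four adjacent line elements), and the only overlap between $\ext(\th)$ and $\ext(\del)$ is $\{0,n\}$ by the standing assumption $\ext(\th)\cap\ext(\del)=\{0,n\}$. The elaborate disjointness argument you anticipate is not needed.
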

\begin{proof}
  It is clear that
  $\ext(\th)\cup\ext(\del)\subseteq\ext(\th\o\del)$, and that any
  $x\in L$ at a distance smaller than 1 from a crossing belongs to
  $\ext(\th\o\del)$. So it remains to be checked that every $x\in
  \ext(\th\o\del) \setminus (\ext(\th)\cup\ext(\del))$ is at distance
  less than one from a crossing. 
  Take such an $x$.  Since $x\notin \ext(\th)\cup\ext(\del)$, $x$ is
  in the interior of the trajectory diagram. There are two cases,
  depending on whether $x$ is part of a rest, or not. 

  Assume that $x$ is not part of a rest of $\th\o\del$. Hence
  $x-1\mathrel{\th\o\del}x+1$. By permutability, there must exist
  $y,y'$ such that  $x-1\thr y \delr x+1$ and $x-1\delr y'\thr
  x+1$. If $y' = y\pm 2$, the segment joining $y$ and $y'$ crosses at
  $x$ the  one determined by $x-1$ and $x+1$, and we are
  done. Otherwise, if $x$ does not belong to the line segment $\overline{yy'}$,
  each of them must lie in parallel lines at horizontal distance 2; see Figure~\ref{fig:horiz-dist}. It can be easily seen by considering the bounces of
  these three fragments of the trajectory that either $\th\o\del$ is
  trivial or none of them  has any rest. To finish this case, observe
  that this kind of crossing contributes with 2 extremes of
  $\th\o\del$ ($x$ and $y\pm1$).
  
  \definecolor{eqdfdf}{rgb}{0.88,0.87,0.87}
  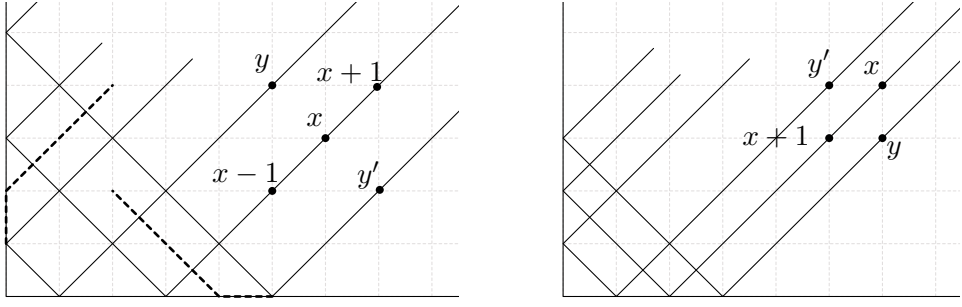
\begin{figure}
    \begin{center}
    \begin{tabular}{ccc}
      \begin{tikzpicture}[line cap=round,line join=round,>=triangle 45,x=0.7cm,y=0.7cm]
        \draw [color=eqdfdf,dash pattern=on 1pt off 1pt, xstep=0.7cm,ystep=0.7cm] (2,-3) grid (10.5,2.57);
        \clip(1.63,-3.32) rectangle (10.5,2.57);
        \draw (10,4)-- (3,-3)-- (2,-2)-- (5.5,1.5);
        \draw (14,4)-- (7,-3)-- (2,2)-- (3,3);
        \draw (12,4)-- (5,-3)-- (2,0)-- (3.8,1.8);
        \draw [line width=1pt,dash pattern=on 2pt off 2pt] (7,-3)-- (6,-3)-- (4,-1);
        \draw [line width=1pt,dash pattern=on 2pt off 2pt] (2,-2)-- (2,-1)-- (4,1);
        \draw [line width=0.4pt] (16,4)-- (2,4)-- (2,-3)-- (16,-3);
        \fill [color=black] (7,-1) circle (1.5pt);
        \draw[color=black] (6.5,-0.68) node {$x -1$};
        \fill [color=black] (8,0) circle (1.5pt);
        \draw[color=black] (7.8,0.34) node {$x$};
        \fill [color=black] (8.97,0.97) circle (1.5pt);
        \draw[color=black] (8.46,1.24) node {$x+1$};
        \fill [color=black] (7,1) circle (1.5pt);
        \draw[color=black] (6.81,1.42) node {$y$};
        \fill [color=black] (9.02,-0.98) circle (1.5pt);
        \draw[color=black] (8.81,-0.7) node {$y'$};
      \end{tikzpicture}
      & \ &
      \definecolor{eqdfdf}{rgb}{0.88,0.87,0.87}
      \definecolor{wwwwww}{rgb}{0.6,0.6,0.6}
      \begin{tikzpicture}[line cap=round,line join=round,>=triangle 45,x=0.7cm,y=0.7cm]
        \draw [color=eqdfdf,dash pattern=on 1pt off 1pt, xstep=0.7cm,ystep=0.7cm] (3,-3) grid (10.45,2.57);
        \clip(2.62,-3.32) rectangle (10.45,2.57);
        \draw (11,4)-- (4,-3)-- (3,-2)-- (6.5,1.5);
        \draw (12,4)-- (5,-3)-- (3,-1)-- (5.2,1.2);
        \draw (13,4)-- (6,-3)-- (3,0)-- (4.7,1.7);
        \draw [line width=0.4pt] (17,4)-- (3,4)-- (3,-3)-- (17,-3);
          \fill [color=black] (9,1) circle (1.5pt);
          \draw[color=black] (8.8,1.34) node {$x$};
          \fill [color=black] (8,0) circle (1.5pt);
          \draw[color=black] (7,0.03) node {$x+1$};
          \fill [color=black] (8,1) circle (1.5pt);
          \draw[color=black] (7.81,1.41) node {$y'$};
          \fill [color=black] (9,0) circle (1.5pt);
          \draw[color=black] (9.22,-0.24) node {$y$};
      \end{tikzpicture}
    \end{tabular}
    \end{center}
    \caption{Fragments at horizontal distance 2 and 1, respectively.}\label{fig:horiz-dist}
  \end{figure}
  The case for $x$ being part of a rest of  $\th\o\del$  is very
  similar. For instance, if $x-1$ is a rest, then $x-1
  \mathrel{\th\o\del} x$,  and by permutability, there must exist
  $y,y'$ such that  $x-1\thr y \delr x$ and $x-1\delr y'\thr
  x$. It is easily seen that for nontrivial  $\th\o\del$  we must
  have  $y' = y\pm 1$, and then this kind of crossing contributes with
  4 in the count of  $\ext(\th\o\del)$ (namely, $x-1$, $x'$, $y$, $y'$).
\end{proof}
For the purpose of the next proofs, let $r_\th$ denote the number of rests of
$\th$ and let $c_{\th,\del}$ denote the number of crossings in the trajectory
diagram of $\th\o\del$. From Equation~(\ref{eq:1}) we have that 
\[
r_\th = \card{\ext(\th)}-\alt_{\th}-1.
\] 
\begin{corollary}\label{cor:bounces+crossings}
  If  any of $\th$, $\del$ has at least one rest,
  we have  $\alt_{\th\o\del}= \alt_\th+\alt_\del-1 + 2\cdot c_{\th,\del}$.
\end{corollary}
\begin{proof}
  By the proof
  of Lemma~\ref{lem:extremos}, rests of $\th\o\del$ correspond to
  rests of either $\th$ or $\del$, or to crossings at half-integral
  coordinates, the latter contributing with two rests each. We keep
  the notation of the previous Lemma ($h,i$ for the number of
  crossings at half-integral, integral coordinates, resp.). So we have
  \begin{align*}
  \alt_{\th\o\del} & =  \card{\ext(\th\o\del)} -1 - r_{\th\o\del} \\
  \alt_{\th\o\del} & =  \card{\ext(\th\o\del)} -1 - \bigl( r_{\th} +
  r_{\del} + 2h\bigr) \\
  & =  \card{\ext(\th\o\del)} - 1  - \bigl( \card{\ext(\th)}-\alt_\th
  -1 +  \card{\ext(\del)} -\alt_\del -1 + 2h\bigr)\\
  & =  \card{\ext(\th)} +  \card{\ext(\del)} +
  2i + 4h -2 -1 - \card{\ext(\th)}+\alt_\th + 1 -
  \card{\ext(\del)} +\alt_\del +1 - 2h\\
  & =  
  2i+ 2h  -1 + \alt_\th +\alt_\del \\
  & = \alt_\th +\alt_\del -1 + 2\cdot c_{\th,\del}.
  \end{align*}
\end{proof}
\begin{lemma}
  \label{lem:paridad-alternancia}
  One of  $\alt_\th$, $\alt_\del$ must be odd.
\end{lemma}
\begin{proof}
  We show that  if both  $\alt_\th$ and $\alt_\del$ are even, 
  $\ext(\th)\cap \ext(\del) \neq\{0,n\}$.
  
  First observe that if both frequencies are even, then neither $k\thr
  n$ nor $l \delr n$ hold. For $\th={\conr{k}{r}}$, this happens because 
  \[n - \displ{r}(n) = n - \card{\br} = \alt_\th\cdot k \modkk 0 = 0 -
  \displ{r}(0),\]
  and hence $0 \thr n$. But by definition of $k$, $k \thr 0$ is not
  possible; hence $k \nthr n$. The same works for $\del$.
  
  We proceed by considering some cases. If $0 \mathrel{(\th\o\del)} k$,
  by permutability there must exist $x$ such that $0\delr x \thr k$;
  this $x$  belongs to  $\ext(\th)\cap \ext(\del)\setminus
  \{0,n\}$, because $k\nthr 0$ and $k\nthr n$. If $0
  \mathrel{(\th\o\del)} l$ we arrive at the same consequence \emph{mutatis mutandis}.

  Otherwise, assume $(0,k), (0,l)\notin \th\o\del$. Since
  $l\in\ext(\del)\subseteq\ext(\th\o\del)$ (Lemma~\ref{lem:aux-meet}.1), we have $l \mathrel{(\th\o\del)}
  k$. We may then find some $x$ such
  that  $l\delr x \thr k$ holds. But again $x\in \ext(\th)\cap \ext(\del)\setminus
  \{0,n\}$ and we are done.
\end{proof}
\begin{lemma}\label{lem:overlap}
  There exist no distinct $x,y,y'\in L$ such that $y$ and $y'$ are
  consecutive, $x\mathrel{\th\cap\del} y$ and $x+1\mathrel{\th\cap\del} y'$.
\end{lemma}
\begin{proof}
  By way of contradiction, assume there exist such $x,y,y'$ with $x$
  minimal with this property. 
  
  We first check $x\neq 0$; otherwise, since
  $y\mathrel{\th\cap\del} x$, then
  $y\in\ext(\th)\cap\ext(\del)=\{0,n\}$. Therefore, $y=n$, and we have
  \[
  0\modkk n-|\br| = k\cdot \alt_\th,
  \]
  and hence $2 \mid \alt_\th$. Similarly, $2\mid \alt_\del$, but this
  contradicts Lemma~\ref{lem:paridad-alternancia}.

  We will show that $x$ must be a common extreme of both $\th$ and
  $\del$. 

  We first consider the case where $y'=y+1$, and perform an analysis
  of the $\th$-relationships. Since $x\thr y$,
  $x+1\thr y+1$ we have four options, according to the signs below:
  \begin{align}
    x-\displ{r}(x)&\modkk \pm(y -\displ{r}(y))\label{eq:x-con-y} \\
    x+1-\displ{r}(x+1)&\modkk \pm(y+1
    -\displ{r}(y+1))\label{eq:x+1-con-y+1},
  \end{align}
  which we denote by $(++)$, $(+-)$, $(-+)$, and $(--)$. By minimality
  of $x$, we have
  \begin{equation}\label{eq:minimal}
    x-1-\displ{r}(x-1)\not\modkk y-1 -\displ{r}(y-1).
  \end{equation}
  
  Cases $(++)$, $(+-)$, and $(-+)$ can be treated uniformly; in each
  of them, there is a $d\in\{0,1\}$ such that
  \begin{equation}
    x+d-\displ{r}(x+d)\modkk y+d
    -\displ{r}(y+d).\label{eq:x+d-con-y+d}
  \end{equation}
  Add 1 to each side of  (\ref{eq:minimal}) and  subtract from
  (\ref{eq:x+d-con-y+d}):
  \begin{equation*}
    d+\displ{r}(x-1)-\displ{r}(x+d)\not\modkk d + \displ{r}(y-1)  -\displ{r}(y+d)
  \end{equation*}
  and then
  \begin{equation*}
    \displ{r}(x-1)-\displ{r}(x+d)\not\modkk \displ{r}(y-1)  -\displ{r}(y+d).
  \end{equation*}
  Since $\th$ is not trivial, only one of $x-1,x$ can be a rest; the
  same happens with $y$. And then each side of the inequality is
  either $1$ or $0$. So at least one of $x,y$ is part of  rest, but
  then both are, because $x \thr y$. Hence $x\in\ext(\th)$.
  
  For case $(--)$, we subtract the corresponding version of
  (\ref{eq:x+1-con-y+1}) from (\ref{eq:x-con-y}):
  \[
    -\displ{r}(x)-1+\displ{r}(x+1)\modkk \displ{r}(y) +1 -\displ{r}(y+1)
  \]
  and then
  \[
    \bigl(\displ{r}(x+1)-\displ{r}(x)\bigr)+\bigl(\displ{r}(y+1)-
    \displ{r}(y)\bigr) \modkk 2.
  \]
  Since both differences are $1$ or $0$, the only way that this holds
  is that both $x,y$ are rests (hence extremes) or $k=1$ (in which
  case \emph{every} element is a extreme). 
  
  We conclude that in each of the four cases, $x\in\ext(\th)$. The
  same analysis applies to $\del$-relationships, hence 
  $x\in\ext(\th)\cap\ext(\del)$.
  
  Now consider the case where $y'=y-1$. As
  before,  the $\th$-relationships are
  \begin{align}
    x-\displ{r}(x)&\modkk \pm(y -\displ{r}(y))\label{eq:x-con-y-nueva} \\
    x+1-\displ{r}(x+1)&\modkk \pm(y-1
    -\displ{r}(y-1))\label{eq:x+1-con-y-1},
  \end{align}
  and 
  \begin{align}
    x-1-\displ{r}(x-1)&\not\modkk y+1
    -\displ{r}(y+1)\label{eq:minimal-nueva}\\
    x-1-\displ{r}(x-1)&\not\modkk -y-1
    +\displ{r}(y+1)\label{eq:minimal-nueva-menos}    
  \end{align}
  by minimality of $x$. 
  
  The corresponding  $(++)$ case is similar to $(--)$ above: By
  subtracting (\ref{eq:x-con-y-nueva}) from (\ref{eq:x+1-con-y-1})  we
  conclude that either
  $x,y$ are parts of rests or $k=1$, and then $x\in\ext(\th)$.

  For the remaining cases, observe that if some of
  $x,y,x-1,y-1$ is a rest, we have $x\in\ext(\th)$ as before.
  Hence we will work under this assumptions:
  \begin{equation}\label{eq:no-rests}
    \begin{split}
      \displ{r}(x-1)&= \displ{r}(x)= \displ{r}(x+1)\\
      \displ{r}(y-1)&=\displ{r}(y) = \displ{r}(y+1).
    \end{split}
  \end{equation}

  Cases $(-+)$ and $(--)$ are impossible: Subtract
  (\ref{eq:minimal-nueva-menos}) from~(\ref{eq:x-con-y-nueva}), 
  \[
         x-\displ{r}(x) -x+1+\displ{r}(x-1)\not\modkk  -y
         +\displ{r}(y) + y+1 -\displ{r}(y+1),
  \]
  and  applying (\ref{eq:no-rests}) we get $1\not\modkk 1$, a
  contradiction.
 
  Case $(+-)$:  Using (\ref{eq:no-rests}) we may add
  (\ref{eq:x-con-y-nueva}) and   (\ref{eq:x+1-con-y-1}) and  obtain:
  \[
  2(x- \displ{r}(x)) \modkk 0,
  \]
  and then $x- \displ{r}(x) \modkk k$ or $0$, and hence
  $x\in\ext(\th)$.

  As before, the same analysis applies to $\del$, hence
  $x\in\ext(\th)\cap\ext(\del)$.
\end{proof}
\begin{lemma}\label{lem:cruces}
  For all  $\th$ and  $\del$,  $ c_{\th,\del} = \tfrac{1}{2}(\alt_\th-1)(\alt_\del
  -1)$.
\end{lemma}
\begin{proof}
  Given two consecutive elements $x<x'$ of $\ext(\del)$ such that $x$
  is not a rest (hence  
  $x'-x=l$), call \emph{trip} the fragment of the line $\tira{L}$
  going from $x$ to $x'$. Thus a trip corresponds to a polygonal piece
  of the trajectory of $\th\o\del$ going across the whole diagram from left to right
  or viceversa. 

  The crucial observation is that for nontrivial
  $\th\o\del$, Lemma~\ref{lem:overlap} implies that trips  do
  not overlap and therefore intersect each other at finitely many
  points. This allows us to use a counting argument to calculate the
  number of crossings. Given $x,x'$ as above and consecutive
  $y,y'\in\ext(\th)$ such that
  \begin{itemize}
  \item $y<y'$,
  \item none of them  is between $x$ and  $x'$, and 
  \item  $y$ not a rest (hence $y'-y=k$), 
  \end{itemize}
  the trip going from $x$ to
  $x'$ crosses the polygonal piece  $\overline{yy'}$  of the
  trajectory exactly once. Hence
  $\overline{yy'}$  is crossed by all the trips that do not contain
  $y$ nor $y'$. We may consider two types of pieces
  $\overline{yy'}$: Those such that $y,y'$ belong to different trips
  (i.e., there is $y<x<y'$ with $x\in\ext(\del)$) or those such that
  $y,y'$ belong to the same trip (see Figure~\ref{fig:trip}). 
  \begin{figure}
    \begin{center}
      \definecolor{tttttt}{rgb}{0.7,0.7,0.7}
      \definecolor{wwwwww}{rgb}{0.6,0.6,0.6}
      \begin{tikzpicture}[line cap=round,line join=round,>=triangle 45,x=0.5cm,y=0.5cm]
        \draw [line width=0.4pt,color=wwwwww] (8,14)-- (8,5);
        \draw [line width=0.4pt,color=wwwwww] (8,5)-- (28,5);
        \draw [line width=0.4pt,color=wwwwww] (28,5)-- (28,14);
        \draw [line width=0.4pt,color=wwwwww] (28,14)-- (8,14);
        \draw [line width=0.4pt,color=tttttt] (8,5)-- (17,14)-- (18,14)-- (27,5);
        \draw [line width=0.4pt,color=tttttt] (11,14)-- (8,11)-- (14,5)-- (23,14);
        \draw [line width=0.4pt,color=tttttt] (11,14)-- (12,14)-- (21,5);
        \draw [line width=0.4pt,color=tttttt] (27,8)-- (21,14)-- (20,14)-- (11,5)-- (8,8)-- (14,14)-- (15,14)-- (24,5);
        \draw [line width=0.4pt,color=tttttt] (21,5)-- (27,11);
        \draw [line width=0.4pt,color=tttttt] (24,5)-- (27,8);
        \draw [line width=0.4pt,color=tttttt] (27,8)-- (28,7)-- (28,6)-- (27,5);
        \draw [line width=0.4pt,color=tttttt] (27,8)-- (28,9)-- (28,10)-- (26,12);
        \draw [line width=0.4pt,color=tttttt] (26,12)-- (24,14)-- (23,14);
        \draw [line width=0.4pt,color=tttttt] (27,11)-- (28,12)--
        (28,13)-- (27,14)-- (26,14)-- (17,5)-- (8,14);
        \draw [line width=0.7pt] (28,7)-- (21,14)-- (20,14)-- (11,5)-- (8,8);
        \draw [line width=1pt] (27,5)-- (28,6)-- (28,7)-- (21,14);
        \draw [line width=1pt] (20,14)-- (11,5);
        \draw (18.74,13.26)-- (19.29,12.71);
        \draw (17.32,11.68)-- (17.78,11.22);
        \draw (15.74,10.26)-- (16.32,9.68);
        \draw (13.71,8.29)-- (14.31,7.69);
        \draw (12.26,6.74)-- (12.78,6.22);
        \draw (22.23,13.23)-- (21.76,12.76);
        \draw (23.67,11.67)-- (23.25,11.25);
        \draw (25.71,9.71)-- (25.26,9.26);
        \draw (27.3,8.3)-- (26.77,7.77);
        \fill [color=black] (21,14) circle (1.5pt);
        \draw[color=black] (21.02,14.54) node {$y_1'$};
        \fill [color=black] (20,14) circle (1.5pt);
        \draw[color=black] (19.96,14.52) node {$y_2$};
        \fill [color=black] (11,5) circle (1.5pt);
        \draw[color=black] (11.04,4.54) node {$y_2'$};
        \fill [color=black] (8,8) circle (1.5pt);
        \draw[color=black] (7.38,8.16) node {$x'$};
        \fill [color=black] (27,5) circle (1.5pt);
        \draw[color=black] (27.18,4.58) node {$y_1$};
        \fill [color=black] (28,7) circle (1.5pt);
        \draw[color=black] (28.32,7.28) node {$x$};
      \end{tikzpicture}
    \end{center}
    \caption{A trip $\overline{xx'}$ and pieces
      $\overline{y_1y_1'}$,  $\overline{y_2y_2'}$ of resp.\ first and second
        type.} \label{fig:trip}
  \end{figure}
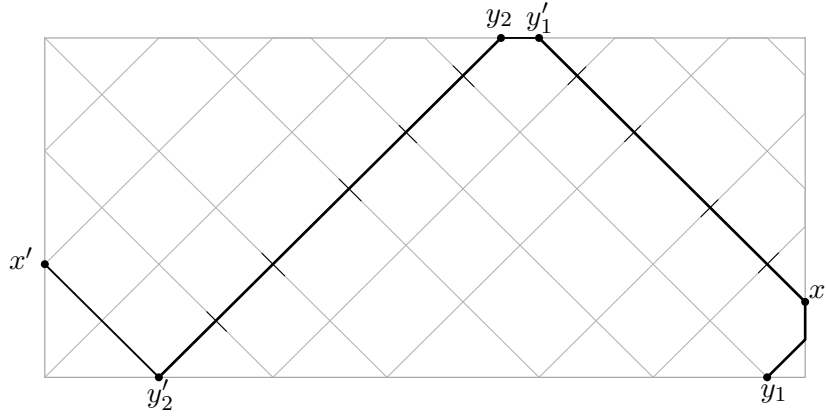
  There are  $\alt_\del-1$ pieces of the first type, and each
  of them is crossed $\alt_\del-2$ times. 

  On the other hand, the total number pieces is equal to $\alt_\th$,
  hence the number pieces of the second type is $\alt_\th
  -(\alt_\del -1)$, and  each  of them is crossed $\alt_\del-1$ times. 

  In this calculation, we have counted each crossing twice, so we
  finally obtain the total number of crossings as
  \[\frac{1}{2}\bigl[(\alt_\del-1)(\alt_\del-2) + \bigl(\alt_\th-(\alt_\del
  -1)\bigr)(\alt_\del-1)\bigr] = \frac{1}{2}(\alt_\th-1)(\alt_\del-1).\]
  \end{proof}

\begin{theorem}\label{th:freq-when-e1=n}
  Assume $\th\o\del\neq L\times L$ and   $\ext(\th)\cap \ext(\del) =\{0,n\}$. 
  Then $\alt_{\th\o\del} = \alt_\th\cdot\alt_\del$.
\end{theorem}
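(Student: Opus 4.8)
The plan is to read off $\alt_{\th\o\del}=\alt_\th\cdot\alt_\del$ as an almost immediate consequence of the two substantive results already established in this section, Corollary~\ref{cor:bounces+crossings} and Lemma~\ref{lem:cruces}, after splitting on whether at least one of $\th,\del$ has a rest. In the \emph{main case}, when $\th$ or $\del$ has a rest, Corollary~\ref{cor:bounces+crossings} gives
\[
\alt_{\th\o\del}=\alt_\th+\alt_\del-1+2c,
\]
where $c$ is the number of crossings of the trajectory diagram of $\th\o\del$, and Lemma~\ref{lem:cruces} evaluates $c=\tfrac12(\alt_\th-1)(\alt_\del-1)$. Substituting and using $(\alt_\th-1)(\alt_\del-1)=\alt_\th\alt_\del-\alt_\th-\alt_\del+1$ yields
\[
\alt_{\th\o\del}=\alt_\th+\alt_\del-1+(\alt_\th-1)(\alt_\del-1)=\alt_\th\alt_\del,
\]
which is the claim.

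For the \emph{remaining case}, when neither $\th$ nor $\del$ has a rest, Corollary~\ref{cor:bounces+crossings} is not directly available and I would re-derive its (easier) analogue. First I would note a parity observation: since no bounce is a rest, $P(x+1)-P(x)=(\pm1,\pm1)$ for every $x$, so $P_1(x)+P_2(x)$ is constant modulo $2$ along the whole trajectory; hence every crossing sits at a point with both coordinates integral, i.e.\ there are no half-integral crossings. Running the argument of Lemma~\ref{lem:extremos} in this setting (the sub-case where $x$ is part of a rest of $\th\o\del$ is now vacuous) shows both that $\th\o\del$ has no rests and that $\card{\ext(\th\o\del)}=\card{\ext(\th)}+\card{\ext(\del)}+2c-2$. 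Since $\th$, $\del$ and $\th\o\del$ are rest-free we have $\card{\ext(\th)}=\alt_\th+1$, $\card{\ext(\del)}=\alt_\del+1$ and $\alt_{\th\o\del}=\card{\ext(\th\o\del)}-1$, so $\alt_{\th\o\del}=\alt_\th+\alt_\del-1+2c$, and Lemma~\ref{lem:cruces} (whose proof does not use the presence of a rest) finishes exactly as before. This also covers the degenerate sub-cases $\alt_\del=1$ (where $c=0$ and $\th\o\del=\th$) and $\alt_\del=2$ with $n$ even.

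Since the arithmetic identity is trivial once the two cited results are in hand, the only point demanding real care is this last case: one must establish the rest-free extreme count from scratch, and the parity observation is exactly what is needed to rule out half-integral crossings (and hence rests of $\th\o\del$) so that the same formula applies verbatim.
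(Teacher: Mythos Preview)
Your treatment of the case where at least one of $\th,\del$ has a rest is exactly the paper's: combine Corollary~\ref{cor:bounces+crossings} with Lemma~\ref{lem:cruces} and expand $(\alt_\th-1)(\alt_\del-1)$.

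For the rest-free case, however, the paper takes a completely different and much shorter route: it observes that $\th=\con{k}$, $\del=\con{l}$ with $k,l\mid n$, so $\lcm(k,l)\in\ext(\th)\cap\ext(\del)$ forces $n=\lcm(k,l)$; then $\gcd(\alt_\th,\alt_\del)=1$, Lemma~\ref{lem:frecuencias}(2) gives $\alt_\th\alt_\del\mid\alt_{\th\o\del}$, and the explicit upper bound $\con{\gcd(k,l)}$ (of frequency $\alt_\th\alt_\del$) forces equality. No trajectory counting is needed.

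Your proposal to instead re-derive the extreme-count formula has a gap. The parity observation is correct and does rule out half-integral crossings (and, via permutability, rests of $\th\o\del$). But the proof of Lemma~\ref{lem:extremos} has a second obstacle in the non-rest sub-case: the ``otherwise'' branch, where the witnesses $y,y'$ to $x-1\mathrel{\th\o\del}x+1$ do \emph{not} satisfy $y'=y\pm2$, so that the three fragments through $x$, $y$, $y'$ sit on parallel lines at horizontal distance~2 with no crossing near $P(x)$. In the paper this branch is disposed of precisely by invoking the hypothesis that some rest is present (one argues that the parallel configuration forces triviality once a rest appears somewhere). In the rest-free setting that argument is unavailable, and your parity remark does not address it: ruling out half-integral crossings says nothing about whether an integral crossing actually sits at $P(x)$. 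So you cannot simply ``run the argument'' of Lemma~\ref{lem:extremos} to obtain $\card{\ext(\th\o\del)}=\card{\ext(\th)}+\card{\ext(\del)}+2c-2$; you would need an additional argument showing that in the rest-free case every interior extreme of $\th\o\del$ is still at a crossing. The paper's arithmetic argument sidesteps this entirely.
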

\begin{proof}
  Assume first that at least one of the congruences has a rest. By
  Corollary~\ref{cor:bounces+crossings}, we obtain
  \[ 
  \alt_{\th\o\del} = \alt_\th+\alt_\del -1 + 2 c_{\th,\del}.
  \]
  Using Lemma~\ref{lem:cruces},
  \[ \alt_\th+\alt_\del-1 + (\alt_\th-1)(\alt_\del-1) =
  \alt_\th\cdot\alt_\del,\]
  and we are done. 

  Now we turn to the case without rests. That is, 
  assume $\th={\con{k}}$ and $\del={\con{l}}$. Note that $k$ and
  $l$ divide $n$ by Theorem~\ref{th:charact-congruences}. Also observe that
  $\ext(\th) =  \{t\cdot k : t \in [0,\tfrac{n}{k}]\}$ and analogously for $\del$.
  In particular, $\lcm(k,l)\in\ext(\th)\cap\ext(\del)$ and hence
  $n=\lcm(k,l)$. So, 
  \[\alt_\th = \frac{n}{k} =\frac{\lcm(k,l)}{k}, \quad \text{ and similarly
  } \quad \alt_\del = \frac{\lcm(k,l)}{l}.\]
   It is easy to see  now that $\gcd(\alt_\th,\alt_\del) =1$. On the other hand, by
  Lemma~\ref{lem:frecuencias}.(2), we know that both $\alt_\th$ and
  $\alt_\del$ divide $\alt_{\th\o\del}$, therefore
  $\alt_\th\cdot\alt_\del \mid \alt_{\th\o\del}$. Finally, $\con{\gcd(k,l)}$ is
  an upper bound of $\{\th,\del\}$ in  $\Con\tira{L}$, having
  frequency $\alt_\th\cdot\alt_\del$; therefore we obtain
  $\alt_{\th\o\del} = \alt_\th\cdot\alt_\del$.    
\end{proof}

\begin{corollary}
  Assume that for $\th={\conr{k}{r}}$, $\del={\conr{l}{s}}$,  $\th\o\del\neq L\times
  L$ and $\ext(\th)\cap \ext(\del) =\{0,n\}$. Then the cardinality of
  $\tira{L}/\th\o\del$ is  
  \[\left\lfloor\frac{nkl}{(n-\card{\br})(n-\card{\bs})}\right\rfloor.\]
\end{corollary}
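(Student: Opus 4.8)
The plan is to read the statement off directly from Theorem~\ref{th:freq-when-e1=n} together with the dictionary between frequency, step and quotient set up just after Theorem~\ref{thm:meet}. Since $\th\o\del\neq L\times L$ and $\ext(\th)\cap\ext(\del)=\{0,n\}$, Theorem~\ref{th:freq-when-e1=n} gives $\alt_{\th\o\del}=\alt_\th\cdot\alt_\del$. Unwinding the definition of frequency, $\alt_\th=\tfrac{n-\card{\br}}{k}$ and $\alt_\del=\tfrac{n-\card{\bs}}{l}$, so that
\[
\alt_{\th\o\del}=\frac{(n-\card{\br})(n-\card{\bs})}{kl}.
\]

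Now recall that for a nontrivial congruence $\rho$ of $\tira L_n$ the step is recovered from the frequency as the quotient of the Euclidean division of $n$ by $\alt_\rho$, i.e.\ $\lfloor n/\alt_\rho\rfloor$. Applying this to $\rho=\th\o\del$ — which is a proper nontrivial congruence, since $\th\subseteq\th\o\del$ while $\th={\conr{k}{r}}$ has step $k\le n/2<n$ (so $\th$, and hence $\th\o\del$, differs from $\mathrm{Id}$) and $\th\o\del\neq L\times L$ by hypothesis — we conclude that the step of $\th\o\del$, which is what governs the size of $\tira L/\th\o\del$, equals
\[
\left\lfloor\frac{n}{\alt_{\th\o\del}}\right\rfloor=\left\lfloor\frac{nkl}{(n-\card{\br})(n-\card{\bs})}\right\rfloor,
\]
as claimed.

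Since the heavy lifting is done by Theorem~\ref{th:freq-when-e1=n}, there is no real obstacle here; the only points needing attention are routine. One must check that $\th$ and $\del$ are genuinely proper nontrivial congruences, so that writing them as ${\conr{k}{r}}$ and ${\conr{l}{s}}$ in the sense of Theorem~\ref{th:charact-congruences} is legitimate and the frequency identities above apply to both of them and to $\th\o\del$. One should also observe that $\card{\br}<\alt_\th$ and $\card{\bs}<\alt_\del$ — because consecutive rests are forbidden and no rest starts at $0$ or ends at $n$ — which is exactly what makes the Euclidean-division description of the step, and hence the floor appearing in the statement, the correct one.
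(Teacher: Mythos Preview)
Your argument is correct and follows exactly the paper's route: apply Theorem~\ref{th:freq-when-e1=n} to get $\alt_{\th\o\del}=\alt_\th\alt_\del$, rewrite the frequencies via $\alt_\th=(n-\card{\br})/k$ and $\alt_\del=(n-\card{\bs})/l$, and then recover the step as the integer quotient $\lfloor n/\alt_{\th\o\del}\rfloor$. The paper's own proof is a single sentence stating precisely this; your added checks (that $\th\o\del$ is a proper nontrivial congruence, and that $\card{\br}<\alt_\th$ so the Euclidean-division description of the step applies) are welcome elaborations but do not change the line of reasoning.
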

\begin{proof}
  Immediate since $\alt_\th=\tfrac{n-\card{\br}}{k}$ (analogously with
  $\alt_\del$) and the step of
    the congruence $\th\o\del$ is equal to the integral division of $n$
    by $\alt_{\th\o\del}$.
\end{proof}

We need now one further result in order to  lift the assumption of
$\ext(\del)\cap \ext(\del) =\{0,n\}$.  Given congruences $\th,\del$ in
the catalog of nontrivial joins, it is 
useful to be able to recover the length $n$ of the line given the
frequency $\alt_\del$ and the step $l$ of $\del$.  In
Table~\ref{th:values-n-terms-step} we gather the values of
$n=n(l,\alt_\del)$ in terms of  the corresponding parameters $l$ and $\alt_\del$.

\begin{table}[h]
  \begin{center}
    \begin{tabular}{c|c|c|}
      \cline{2-3}
        & \multicolumn{2}{c|}{$n = n(l,\alt_\del)$}     \\ \cline{2-3}
      & \multicolumn{1}{c|}{$\alt_\del$ even} & 
      \multicolumn{1}{c|}{ $\alt_\del$ odd} \\ \hline
     \multicolumn{1}{|c|}{ No rests}         & \multicolumn{2}{c|}{$l\,\alt_\del$}             \\ \hline
      \multicolumn{1}{|c|}{Rests on top}     & $(2l+1)\tfrac{\alt_\del}{2} $    &
      $\frac{1}{2}[(2l+1)\alt_\del-1]$      \\ \hline
      \multicolumn{1}{|c|}{Rests on bottom}  &
      $(2l+1)\frac{\alt_\del}{2} -1 $ &
      $\frac{1}{2}[(2l+1)\alt_\del-1]$     \\ \hline 
      \multicolumn{1}{|c|}{Rests everywhere} & \multicolumn{2}{c|}{$(l+1)\alt_\del-1$}       \\ \hline
    \end{tabular}
  \end{center}
  \caption{Values of $n$ in terms of step and frequency (case
    $\alt_\del\geq 3$).}\label{th:values-n-terms-step}
\end{table}

Note that actually, for $0\leq i\leq\alt_\del$, the values $n(l,i)$ belong
to $\ext(\del)$ (using the formula according to the parity of
$i$). These values will appear in the proof of the following result.

\begin{lemma}\label{lem:barney-theory}
  Assume that $\ext(\th)\cap \ext(\del) =\{0,n\}$ and $\th\o\del\neq
  L\times L$. Then $\gcd(\alt_\th,\alt_\del)=1$. 
\end{lemma}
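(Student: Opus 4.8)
The plan is to dispose of the degenerate values $\alt_\del\in\{1,2\}$ by hand, and for $\alt_\del\geq 3$ to run a uniform divisibility argument drawn from the catalog (Theorem~\ref{th:catalog}) together with the formulas of Table~\ref{th:values-n-terms-step}.

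If $\alt_\del=1$ then $\del=\mathrm{Id}_L$ and the statement is trivial. If $\alt_\del=2$ then, since $\ext(\th)\cap\ext(\del)=\{0,n\}$ and $\th\o\del\neq L\times L$, the contrapositive of Lemma~\ref{lem:paridad-alternancia} forbids $\alt_\th$ and $\alt_\del$ from both being even; as $\alt_\del$ is even, $\alt_\th$ is odd and $\gcd(\alt_\th,2)=1$.

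Now suppose $\alt_\del\geq 3$, hence $\alt_\th\geq\alt_\del\geq 3$ (as $k\leq l$), so the pair $(\th,\del)$ falls under case~(3) of the catalog. The key observation is that to each catalog congruence $\lambda$ of step $q$ one can attach an integer $Q_\lambda$ --- namely $q$ when $\lambda$ is rest-free (in which case $q$ may also be replaced by $2q$), and $2q+1$, $2q+1$, $q+1$ when $\lambda$ has rests only at the top, only at the bottom, or everywhere --- such that, reading off the simplified forms of Table~\ref{tbl:simpl-expr-congr-catal}, every multiple of $Q_\lambda$ lying in $[0,n]$ is an extreme of $\lambda$, while Table~\ref{th:values-n-terms-step} gives $\alt_\lambda=(cn+c')/Q_\lambda$ for a pair $(c,c')\in\{(1,0),(2,0),(2,1),(1,1)\}$ depending only on the catalog subcase. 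In every subcase of~(3) except the one in which $\th$ is odd with rests on the bottom and $\del$ is odd with rests everywhere (or conversely), the normalization $(c,c')$ can be taken to be the \emph{same} for $\th$ and $\del$; writing $m=cn+c'$, both $Q_\th$ and $Q_\del$ divide $m$, so $\lcm(Q_\th,Q_\del)\mid m$. If $\lcm(Q_\th,Q_\del)<m$ then it is a proper divisor of $m$ and hence at most $n$; if it is strictly below $n$ it is a positive common extreme of $\th$ and $\del$ inside $(0,n)$, contradicting $\ext(\th)\cap\ext(\del)=\{0,n\}$; and the value $\lcm(Q_\th,Q_\del)=n$ can occur only in the subcase $(c,c')=(2,0)$, where it forces the rest-free member of $\{\th,\del\}$ to have even frequency, contradicting the catalog. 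Therefore $\lcm(Q_\th,Q_\del)=m$, so
\[
\gcd(\alt_\th,\alt_\del)=\gcd\!\left(\frac{\lcm(Q_\th,Q_\del)}{Q_\th},\ \frac{\lcm(Q_\th,Q_\del)}{Q_\del}\right)=1,
\]
by the elementary fact that $\lcm(a,b)/a$ and $\lcm(a,b)/b$ are coprime.

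Finally, in the exceptional subcase the normalizations attached to $\th$ and $\del$ differ: one of them satisfies $\alt\mid 2n+1$ (rests on bottom, odd frequency) and the other $\alt\mid n+1$, hence $\alt\mid 2n+2$ (rests everywhere); so $\gcd(\alt_\th,\alt_\del)$ divides $(2n+2)-(2n+1)=1$ and we are done. The main obstacle is the bookkeeping behind the key observation: for each of the four rest-types one must unwind the congruence in Table~\ref{tbl:simpl-expr-congr-catal} to see which residues modulo $Q_\lambda$ (or modulo $2Q_\lambda$) are occupied by turning points of the folding, match the exact shape of $\alt_\lambda$ and of the additive constant $c'$ against Table~\ref{th:values-n-terms-step}, and verify that the candidate common extreme $\lcm(Q_\th,Q_\del)$ really does land strictly inside $(0,n)$; the borderline value $n$ (possible only in the rest-free versus rests-on-top subcase) and the exceptional subcase are precisely the two points where uniformity breaks down.
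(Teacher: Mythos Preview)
Your approach is essentially the same as the paper's: both dispatch $\alt_\del\in\{1,2\}$ directly, then run through the catalog of Theorem~\ref{th:catalog} for $\alt_\del\geq 3$, extracting from each subcase a divisibility relation that forces coprimality. Your packaging via the parameters $Q_\lambda$ and $(c,c')$ is a tidy way to unify what the paper does case by case (the paper writes out the explicit equation $(2k+1)\alt_\th=(2l+1)\alt_\del$, etc., in each subcase and reduces to the rest-free pattern), and your identity $\gcd(m/Q_\th,m/Q_\del)=m/\lcm(Q_\th,Q_\del)$ is exactly the arithmetic the paper is using implicitly.

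There is one small gap. You invoke Lemma~\ref{lem:paridad-alternancia} only in the $\alt_\del=2$ paragraph, but it is needed again in the $\alt_\del\geq 3$ analysis. Catalog subcase~(3d) (``both have the same parity with rests on the bottom'') allows both frequencies to be even, and in that event the normalization is $(c,c')=(2,2)$, which is absent from your list; with $m=2n+2$ a proper divisor can equal $n+1$, which lies outside $[0,n]$ and so produces no common extreme, breaking your argument. The paper handles this by announcing up front that it will ``make silent use of (the contrapositive of) Lemma~\ref{lem:paridad-alternancia}'' throughout the case analysis, thereby ruling out the both-even configuration and forcing both odd in~(3d), hence $(c,c')=(2,1)$. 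Once you insert that one sentence at the start of your $\alt_\del\geq 3$ case, your argument is complete.
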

\begin{proof}
  The main argument will be to show that if $\alt_\th$ and $\alt_\del$
  have a common divisor, then $\th$ and $\del$ have a common non null
  extreme before $n$.  In the following we will make silent use of
  Lemma~\ref{lem:paridad-alternancia}.

  If $\alt_\del=1$, the result follows. If $\alt_\del=2$, by the
  assumption $\ext(\th)\cap \ext(\del) =\{0,n\}$ we conclude that
  $2\nmid \alt_\th$, and this case is also ready.

  Next, assume that $\alt_\th,\alt_\del \geq 3$. We have to analyze all cases
  of the catalog of nontrivial joins. We first consider the case in
  which $\th$ has no rests. 
  \begin{enumerate}[1.]
  \item\label{item:sin_sin} \emph{$\del$ has no rests.} 
    Every common, non null extreme of $\th$ and $\del$  is of the form
    $x = k\,a = l\,b$
    for some   $0<a\leq \alt_\del$, $0<b \leq \alt_\th$. Hence, 
    if $d>0$ is a common divisor of $\alt_\th$ and $\alt_\del$, we
    may choose $a$ and $b$ such that  $\alt_\th = d \cdot a$ and $\alt_\del = d \cdot b$, hence $k \cdot a = l\cdot 
    b $ is a common non null extreme of $\th$ and $\del$, hence it
    must be $n$. We conclude $d=1$.
  \item \emph{$\del$ has rests on top.} There are two sub-cases,
    according to whether $2 \mid \alt_\del$ or not. If $\alt_\del$ is
    even, $\alt_\th$ must be odd. We have 
    $k\,\alt_\th = (2l+1)\tfrac{\alt_\del}{2},$
    and thus $ 2 k\,\alt_\th = (2l+1)\alt_\del$. We may use the same
    reasoning in item~\ref{item:sin_sin} to conclude that
    $\gcd(\alt_\th,\alt_\del) =1$.

  \end{enumerate}
  
  There remain four more cases, when both congruences have at least
  one rest.
  
  \begin{enumerate}[{\it a.}]
  \item \emph{Both congruences have rests on top.} In this case both
    congruences must have odd frequencies. We
    have
    $n = \frac{1}{2}[(2k+1)\alt_\th-1]= \frac{1}{2}[(2l+1)\alt_\del-1]$,
    hence $(2k+1)\alt_\th  = (2l+1)\alt_\del$. We may then reason as
    in item~\ref{item:sin_sin}.
  \item \emph{Both congruences have rests on bottom.} As in the
    previous case, both frequencies are odd. Since the values of $n$
    are exactly the same as in the previous item, we have immediately
    the conclusion.
  \item  \emph{$\th$ has rests on bottom, and $\del$ has rests
    everywhere.} In this case, $\alt_\th$ must be even. We have
    $(2k+1)\frac{\alt_\th}{2} -1  = (l+1)\alt_\del-1$.
    Hence we obtain $(2k+1)\alt_\th = 2(l+1)\alt_\del$, and we may
    reason as in item~\ref{item:sin_sin}.
  \item \emph{Both congruences have rests everywhere.} Here we equate
    $(k+1)\alt_\th-1 = (l+1)\alt_\del-1$
    and by adding 1 we are led to the same case as item~\ref{item:sin_sin}.\qedhere
  \end{enumerate}    
\end{proof}

Next we obtain a simple formula for the frequency of
${\th\o\del}$ in terms of the original frequencies, thereby enabling us to
calculate the step of a nontrivial join in general. 
\begin{thm}
\label{thm:caracterizacion sup e inf}Let $\theta$ and $\delta$
be congruences of $\mathbf{L}_{n}$ with nontrivial join. Then, $\alt_{\theta\vee\delta}=\lcm\left(\alt_{\theta},\alt_{\delta}\right)$
and $\alt_{\theta\wedge\delta}=\gcd\left(\alt_{\theta},\alt_{\delta}\right)$.\end{thm}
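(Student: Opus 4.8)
The plan is to bootstrap from the already-settled special case $\ext(\th)\cap\ext(\del)=\{0,n\}$, where the join formula is exactly Theorem~\ref{th:freq-when-e1=n}, to the general statement, using the multiplicativity formulas of Lemma~\ref{lem:frecuencias}. Note first that since $\th\o\del\neq L\times L$ we have $\th\neq L\times L\neq\del$, so both congruences are nontrivial and $\th\y\del\subseteq\th$ is nontrivial too; and since the assertion is symmetric in $\th$ and $\del$, we may assume $k\le l$ as in the running notation. I would begin with the base case $\ext(\th)\cap\ext(\del)=\{0,n\}$, i.e.\ $e_1=n$: here Lemma~\ref{lem:aux-meet}(4) gives $\th\y\del=\mathrm{Id}$, hence $\alt_{\th\y\del}=1$, while Lemma~\ref{lem:barney-theory} gives $\gcd(\alt_\th,\alt_\del)=1$; so the meet formula holds, and coprimality turns the identity $\alt_{\th\o\del}=\alt_\th\cdot\alt_\del$ of Theorem~\ref{th:freq-when-e1=n} into $\alt_{\th\o\del}=\lcm(\alt_\th,\alt_\del)$.

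For the general case, set $e_1\doteq\min\{e\in\ext(\th)\cap\ext(\del)\mid e>0\}$ and $g\doteq\alt_{\th\y\del}$. I would first check that $e_1$ is a common extreme that is not the right part of a rest of either congruence: if it were the right part of a rest of $\th$, then by Lemma~\ref{lem:coinciden-en-etas} it would also be the right part of a rest of $\del$, whence $e_1-1\in\ext(\th)\cap\ext(\del)$, contradicting the minimality of $e_1$ when $e_1>1$ and making $\th$ trivial (Lemma~\ref{lem:carac-trivial}) when $e_1=1$. Consequently $\th_{e_1}$ and $\del_{e_1}$ are nontrivial congruences of $\tira{L}_{e_1}$ with $\ext(\th_{e_1})\cap\ext(\del_{e_1})=\{0,e_1\}$ (any common extreme strictly between $0$ and $e_1$ would be a common extreme of $\th$ and $\del$ in $(0,e_1)$), their join is $(\th\o\del)_{e_1}$ by Lemma~\ref{lem:frecuencias}(3), and this join is nontrivial: otherwise $0\mathrel{(\th\o\del)}1$, forcing $\th\o\del=L\times L$ by Lemma~\ref{lem:carac-trivial}. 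So the base case applies to the pair $\th_{e_1},\del_{e_1}$, giving $\alt_{\th_{e_1}\o\del_{e_1}}=\alt_{\th_{e_1}}\cdot\alt_{\del_{e_1}}$ and $\gcd(\alt_{\th_{e_1}},\alt_{\del_{e_1}})=1$.

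It then only remains to assemble. By Lemma~\ref{lem:frecuencias}(2) (applied to $\th$ and, by symmetry, to $\del$), by Lemma~\ref{lem:frecuencias}(4), and by the base case for $\th_{e_1},\del_{e_1}$,
\[
\alt_\th=\alt_{\th_{e_1}}\cdot g,\qquad \alt_\del=\alt_{\del_{e_1}}\cdot g,\qquad \alt_{\th\o\del}=\alt_{\th_{e_1}}\cdot\alt_{\del_{e_1}}\cdot g .
\]
Since $\gcd(ag,bg)=g\cdot\gcd(a,b)$ and $\lcm(ag,bg)=g\cdot\lcm(a,b)$ for all positive integers, and $\alt_{\th_{e_1}}$, $\alt_{\del_{e_1}}$ are coprime, this yields $\gcd(\alt_\th,\alt_\del)=g=\alt_{\th\y\del}$ and $\lcm(\alt_\th,\alt_\del)=g\cdot\alt_{\th_{e_1}}\cdot\alt_{\del_{e_1}}=\alt_{\th\o\del}$, as required. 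The genuine mathematical work has already been extracted in Theorem~\ref{th:freq-when-e1=n} and Lemma~\ref{lem:barney-theory}, so I expect the only delicate point to be the reduction step of the second paragraph — checking that $\th_{e_1}$ and $\del_{e_1}$ are bona fide congruences of $\tira{L}_{e_1}$, with no common extreme besides $0$ and $e_1$ and with nontrivial join — but each of these follows from the order and meet characterisations together with Lemma~\ref{lem:carac-trivial}.
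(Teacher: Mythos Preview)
Your proof is correct and follows essentially the same route as the paper: reduce to the case $\ext(\th)\cap\ext(\del)=\{0,e_1\}$ by restricting to $\tira{L}_{e_1}$, invoke Theorem~\ref{th:freq-when-e1=n} and Lemma~\ref{lem:barney-theory} there, and assemble via the multiplicativity formulas of Lemma~\ref{lem:frecuencias}. You are in fact slightly more careful than the paper in verifying that $e_1$ is not the right part of a rest and that $\ext(\th_{e_1})\cap\ext(\del_{e_1})=\{0,e_1\}$, which the paper leaves implicit.
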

\begin{proof}
Let $e_{1}$ be the first positive element in $\ext\left(\theta\right)\cap\ext\left(\delta\right)$.
Observe that $\theta_{e_{1}}$ and $\delta_{e_{1}}$ are congruences
of $\mathbf{L}_{e_{1}}$, and $\theta_{e_{1}}\vee\delta_{e_{1}}$
is nontrivial (because otherwise $\left\langle 0,1\right\rangle \in\theta_{e_{1}}\vee\delta_{e_{1}}\subseteq\theta\vee\delta$).
So Theorem~\ref{th:freq-when-e1=n} yields 
\[
\alt_{\theta_{e_{1}}\vee\delta_{e_{1}}}=\alt_{\theta_{e_{1}}}\alt_{\delta_{e_{1}}},
\]
and Lemma~\ref{lem:barney-theory} says that 
\[
\gcd\left(\alt_{\theta_{e_{1}}},\alt_{\delta_{e_{1}}}\right)=1.
\]
Recall from Lemma~\ref{lem:frecuencias} that 
\begin{align*}
\alt_{\theta\vee\delta} & =\alt_{\theta_{e_{1}}\vee\delta_{e_{1}}}\alt_{\theta\wedge\delta}\\
\alt_{\theta} & =\alt_{\theta_{e_{1}}}\alt_{\theta\wedge\delta}\\
\alt_{\delta} & =\alt_{\delta_{e_{1}}}\alt_{\theta\wedge\delta}.
\end{align*}
Now both desired formulas follow easily: 
\begin{align*}
\lcm\left(\alt_{\theta},\alt_{\delta}\right) & =\lcm\left(\alt_{\theta_{e_{1}}}\alt_{\theta\wedge\delta},\alt_{\delta_{e_{1}}}\alt_{\theta\wedge\delta}\right)\\
 & =\lcm\left(\alt_{\theta_{e_{1}}},\alt_{\delta_{e_{1}}}\right)\alt_{\theta\wedge\delta}\\
 & =\alt_{\theta_{e_{1}}}\alt_{\delta_{e_{1}}}\alt_{\theta\wedge\delta}\\
 & =\alt_{\theta_{e_{1}}\vee\delta_{e_{1}}}\alt_{\theta\wedge\delta}\\
 & =\alt_{\theta\vee\delta},
\end{align*}
and
\begin{align*}
\gcd\left(\alt_{\theta},\alt_{\delta}\right) & =\gcd\left(\alt_{\theta_{e_{1}}}\alt_{\theta\wedge\delta},\alt_{\delta_{e_{1}}}\alt_{\theta\wedge\delta}\right)\\
 & =\gcd\left(\alt_{\theta_{e_{1}}},\alt_{\delta_{e_{1}}}\right)\alt_{\theta\wedge\delta}\\
 & =\alt_{\theta\wedge\delta}.
\end{align*}
\end{proof}
\begin{corollary}
  Assume that $\th={\conr{k}{r}}$, $\del={\conr{l}{s}}$, and  $\th\o\del\neq L\times
  L$. Then the cardinality of $\tira{L}/\th\o\del$ is the integral
  division of $n$ by
  \[\lcm\left(\frac{n-\card{\br}}{k},\frac{n-\card{\bs}}{l}\right).\]
\end{corollary}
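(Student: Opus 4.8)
The plan is to obtain this as an essentially immediate consequence of Theorem~\ref{thm:caracterizacion sup e inf}, repeating the reasoning used for the corollary stated in the special case $\ext(\th)\cap\ext(\del)=\{0,n\}$, but now fed with the general frequency formula. The first step would simply be to recall that the frequency of $\th={\conr{k}{r}}$ is by definition $\alt_\th=\tfrac{n-\card{\br}}{k}$, and likewise $\alt_\del=\tfrac{n-\card{\bs}}{l}$.

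Next, since $\th\o\del\neq L\times L$, Theorem~\ref{thm:caracterizacion sup e inf} applies and gives
\[
\alt_{\th\o\del}=\lcm(\alt_\th,\alt_\del)=\lcm\!\left(\frac{n-\card{\br}}{k},\,\frac{n-\card{\bs}}{l}\right),
\]
where the second equality is just substitution of the two frequency formulas above.

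Finally I would invoke the remark following the definition of frequency: the step of a congruence (hence, in the bookkeeping used here and in the earlier corollary, the cardinal of the corresponding quotient line) is the quotient of the integer division of $n$ by its frequency, with the number of rests as the remainder. Applying this to $\th\o\del$ converts the displayed frequency identity into the asserted statement that the cardinal of $\tira{L}/\th\o\del$ is the integral division of $n$ by $\lcm\!\left(\tfrac{n-\card{\br}}{k},\tfrac{n-\card{\bs}}{l}\right)$.

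I do not expect any genuine obstacle: all the substance lives in Theorem~\ref{thm:caracterizacion sup e inf}. The only point that needs a moment of care is the passage from the frequency of a congruence to the size of its quotient, but this is handled uniformly by the integer-division relationship between $n$, the frequency, and the step, exactly as in the corollary already proved for the case $\ext(\th)\cap\ext(\del)=\{0,n\}$.
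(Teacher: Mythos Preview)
Your proposal is correct and matches the paper's intent: the paper states this corollary without proof immediately after Theorem~\ref{thm:caracterizacion sup e inf}, and your argument---substituting the definitions $\alt_\th=(n-\card{\br})/k$, $\alt_\del=(n-\card{\bs})/l$ into that theorem and then invoking the remark that the step is the integer quotient of $n$ by the frequency---is exactly the intended (and only natural) derivation, mirroring the proof of the analogous corollary in the special case $\ext(\th)\cap\ext(\del)=\{0,n\}$.
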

We finish by making some observations concerning the class of
lattices of the form $\Con \tira{L}_n$. It is not difficult to see that for
prime $p$, the non distributive lattice $M_{p-2}$ embeds into $\Con
\tira{L}_p$. Also, $\Con \tira{L}_9$ has a sublattice isomorphic to $N_5$: It is
generated by the congruences $\con{4;4}$, $\con{2;4}$, and
$\con{1}$. As a consequence, the class of congruence lattices of
lines does not satisfy the modular law. But
on the other hand, every proper ideal of $\Con \tira{L}_n$ must be
modular indeed, since it is a lattice of permuting relations. Even more is
true:
\begin{theorem}
Let $\rho\neq L\times L$ be a congruence of $\mathbf{L}_{n}$, and
let
\[(\rho]\doteq\left\{ \vartheta\subseteq\rho\mid\vartheta\mbox{ is a
    congruence of }\tira{L}_{n}\right\}.\]
Then the map 
\[
\vartheta\mapsto\alt_{\vartheta}
\]
is a lattice embedding from $\left\langle (\rho],\wedge,\vee,\mathrm{Id}_{L_{n}},\rho\right\rangle $
into $\bigl\langle \{ \mbox{positive divisors of }\alt_{\rho}\} ,\gcd,\lcm,1,\alt_{\rho}\bigr\rangle $.
In particular $\left\langle (\rho],\wedge,\vee\right\rangle $ is
distributive.
\end{theorem}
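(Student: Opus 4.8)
The plan is to verify that the map $\Phi\colon\vartheta\mapsto\alt_\vartheta$ is a bounded lattice homomorphism of $(\rho]$ into the divisor lattice of $\alt_\rho$ and then that it is injective; distributivity of $(\rho]$ then comes for free, since a divisor lattice is a finite product of chains, hence distributive, and any sublattice of a distributive lattice is distributive.

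First I would record the one structural fact that makes everything work: if $\vartheta,\vartheta'\in(\rho]$ then $\vartheta\vee\vartheta'\subseteq\rho\neq L\times L$, so the join of any two elements of $(\rho]$ --- computed in $\Con\tira{L}_n$, and hence also in the principal ideal $(\rho]$ --- is a \emph{nontrivial} congruence. This is precisely the hypothesis of Theorem~\ref{thm:caracterizacion sup e inf}, which then gives $\alt_{\vartheta\vee\vartheta'}=\lcm(\alt_\vartheta,\alt_{\vartheta'})$ and $\alt_{\vartheta\wedge\vartheta'}=\gcd(\alt_\vartheta,\alt_{\vartheta'})$; thus $\Phi$ sends $\vee$ to $\lcm$ and $\wedge$ to $\gcd$. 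Since $\alt_{\mathrm{Id}_{L_n}}=1$ (its quotient being $\tira{L}_n$ itself) and $\Phi(\rho)=\alt_\rho$, the bounds are preserved, and from $\vartheta=\vartheta\wedge\rho$ we obtain $\alt_\vartheta=\gcd(\alt_\vartheta,\alt_\rho)\mid\alt_\rho$, so $\Phi$ indeed takes values among the positive divisors of $\alt_\rho$.

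For injectivity --- the only step requiring a genuine argument --- suppose $\vartheta,\vartheta'\in(\rho]$ with $\alt_\vartheta=\alt_{\vartheta'}$, and put $\sigma\doteq\vartheta\vee\vartheta'$. By the previous paragraph $\sigma$ is nontrivial and $\alt_\sigma=\lcm(\alt_\vartheta,\alt_{\vartheta'})=\alt_\vartheta$. Now $\vartheta\subseteq\sigma$, and since the step of a congruence is the quotient of the integer division of $n$ by its frequency, $\vartheta$ and $\sigma$ have the same step; by Lemma~\ref{lem:aux-meet}(2), a proper inclusion strictly increases the step, so $\vartheta=\sigma$. By the symmetric argument $\vartheta'=\sigma$, whence $\vartheta=\vartheta'$. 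This proves that $\Phi$ is an embedding, and distributivity of $(\rho]$ follows as explained above. I expect the only real obstacle to be recognizing that the equal-frequency hypothesis already fixes the step, so that Lemma~\ref{lem:aux-meet}(2) can collapse the inclusion $\vartheta\subseteq\vartheta\vee\vartheta'$; everything else is routine bookkeeping with the frequency identities already established.
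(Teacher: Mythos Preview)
Your proof is correct, and the homomorphism half is exactly what the paper does: both of you invoke Theorem~\ref{thm:caracterizacion sup e inf} to turn $\vee,\wedge$ into $\lcm,\gcd$, after noting that any two elements of $(\rho]$ have nontrivial join.

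The injectivity argument, however, is genuinely different. The paper argues directly at the level of the structural data of a congruence: from $\alt_\theta=\alt_\delta$ it deduces equal steps, writes $\theta={\conr{k}{r}}$ and $\delta={\conr{k}{s}}$, and then uses Lemma~\ref{lem:coinciden-en-etas} (common extremes carry the same rests when the join is nontrivial) to force $\bar r=\bar s$ entry by entry. Your route is more abstract: you pass to $\sigma=\vartheta\vee\vartheta'$, observe $\alt_\sigma=\alt_\vartheta$ via the $\lcm$ formula, convert equal frequency to equal step, and then collapse $\vartheta\subseteq\sigma$ to equality using Lemma~\ref{lem:aux-meet}(2). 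This is cleaner in that it never touches the rest sequences and leans only on the order-theoretic fact that strict inclusion strictly increases the step; the paper's argument, by contrast, gives a little more: it explains \emph{why} equal-frequency congruences below $\rho$ coincide, by exhibiting the coincidence of their rests.
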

\begin{proof}
In view of Theorem \ref{thm:caracterizacion sup e inf} it only remains
to be shown that the map is injective. Assume $\alt_{\theta}=\alt_{\delta}$.
Then $\theta$ and $\delta$ have the same step; say $\theta={\conr{k}{r}}$
and $\delta={\conr{k}{s}}$. For the sake of contradiction suppose $\bar{r}\neq\bar{s}$,
and let $i$ be the first index such that $r_{i}\neq s_{i}$. If $r_{i}<s_{i}$,
then $r_{i}$ is a common extreme, and Lemma \ref{lem:coinciden-en-etas}
says that $r_{i}\in\bar{s}$, a contradiction. Of course the case
$s_{i}<r_{i}$ is symmetric, and thus $\bar{r}=\bar{s}$.\end{proof}

%
%
%
%
%

%
\section{Conclusions \& Further Work}
\label{sec:conclusions--further-work}%


As mentioned in the introduction, the starting point for this article
is the study of equationally definable functions in the variety of
modal algebras. A critical step in this study is understanding
the lattice of subalgebras of a modal algebra, which is dually isomorphic
to the lattice of congruences of the dual frame of the algebra. We
were able to obtain a very good description of this lattice for line
frames. Congruences on line frames are represented as a tuple of
integer parameters. Employing this representation we were able to
determine their order relation and describe how their meets and joins
are computed, showing that the latter coincide with the relational
composition whenever the result is not trivial. The study of the lattice
operations on congruences required extensive combinatorial reasoning,
which we approached geometrically by using trajectory diagrams --- the
essential tool behind the results of the paper. We obtained explicit
formulas for the cardinality of the quotient of a line by the join
of two congruences, by using the said parameters. Finally, we provided
a description of the general structural of congruence lattices of
line frames by proving that they are composed of lattices of divisors
of integer numbers, with a new top element attached (in particular,
these lattices are not modular in general, but every proper decreasing
subset is distributive).

The continuation of this work will be to apply the results in this
paper to obtain a characterization of algebraic functions in subvarieties
of modal algebras generated by algebras whose duals are finite line
frames.

\begin{ack}
We would like to thank the referee for reading the paper very
carefully and for his detailed and constructive criticism.
The diagrams in this paper were prepared by using the free software
\textsc{GeoGebra} \cite{gg}.
This work was  partially supported by Conicet, Secyt-UNC project
30720150100529CB, ANPCyT-PICT-2013-2011, STIC-AmSud
``Foundations of Graph Structured Data (FoG)'', and the Laboratoire
International Associ\'e ``INFINIS''.%
\end{ack}


%
%
%
%
%
%
%
%
\begin{small}\end{small}

\bigskip

\begin{small}
  \begin{quote}
    CIEM --- Facultad de Matem\'atica, Astronom\'{\i}a y F\'{\i}sica 
    (Fa.M.A.F.) 
    
    Universidad Nacional de C\'ordoba --- Ciudad Universitaria

    C\'ordoba 5000. Argentina.
  \end{quote}
\end{small}

\end{document}